\patchcmd{\subsection}{-.5em}{.5em}{}{}
\begin{document}

\newtheorem{definition}{Definition}[section]
\newtheorem{definitions}[definition]{Definitions}
\newtheorem{deflem}[definition]{Definition and Lemma}
\newtheorem{lemma}[definition]{Lemma}
\newtheorem{proposition}[definition]{Proposition}
\newtheorem{theorem}[definition]{Theorem}
\newtheorem{corollary}[definition]{Corollary}
\newtheorem{cors}[definition]{Corollaries}
\theoremstyle{remark}
\newtheorem{remark}[definition]{Remark}
\theoremstyle{remark}
\newtheorem{remarks}[definition]{Remarks}
\theoremstyle{remark}
\newtheorem{notation}[definition]{Notation}
\theoremstyle{remark}
\newtheorem{example}[definition]{Example}
\theoremstyle{remark}
\newtheorem{examples}[definition]{Examples}
\theoremstyle{remark}
\newtheorem{dgram}[definition]{Diagram}
\theoremstyle{remark}
\newtheorem{fact}[definition]{Fact}
\theoremstyle{remark}
\newtheorem{illust}[definition]{Illustration}
\theoremstyle{remark}
\newtheorem{rmk}[definition]{Remark}
\theoremstyle{definition}
\newtheorem{que}[definition]{Question}
\theoremstyle{definition}
\newtheorem{conj}[definition]{Conjecture}
\newtheorem{por}[definition]{Porism}

\renewenvironment{proof}{\noindent {\bf{Proof.}}}{\hspace*{3mm}{$\Box$}{\vspace{9pt}}}

\author[G. Jindal]{Gorav Jindal}
\address{Department of Computer Science, Aalto University, Finland. Supported by European Research Council (ERC) under the European Union’s Horizon 2020 research and innovation program (grant agreement No 759557) and by Academy of Finland, under grant number 310415}
\email{\href{gorav.jindal@gmail.com}{gorav.jindal@gmail.com}}

\author[A. Pandey]{Anurag Pandey}
\address{Max-Planck Institute for Computer Science, Saarland Informatics Campus, Saarbr\"ucken, Germany}
\email{\href{apandey@mpi-inf.mpg.de}{apandey@mpi-inf.mpg.de}}

\author[H. Shukla]{Himanshu Shukla}
\address{Max-Planck Institute for Computer Science, Saarland Informatics Campus, Saarbr\"ucken, Germany}
\email{\href{hshukla@mpi-inf.mpg.de}{hshukla@mpi-inf.mpg.de}}

\author[C. Zisopoulos]{Charilaos Zisopoulos}
\address{Department of Computer Science, Saarland Informatics Campus, Saarbr\"ucken, Germany}
\email{\href{s9chziso@stud.uni-saarland.de}{s9chziso@stud.uni-saarland.de}}


\title{{How many zeros of a random \textit{sparse} polynomial are real?}}
\keywords{Algebraic complexity theory, sparse polynomials, real-tau conjecture, random polynomials, real zeros}
\subjclass[2010]{68Q15, 34F05, 11C08, 30C15, 26D99}

\maketitle

\begin{abstract}
We investigate the number of real zeros of a univariate $k$-sparse polynomial $f$ over the reals, when the coefficients of $f$ come from independent standard normal distributions. Recently B\"urgisser, Erg\"ur and Tonelli-Cueto showed that the expected number of real zeros of $f$ in such cases is bounded by $O(\sqrt{k} \log k)$. In this work, we improve the bound to $O(\sqrt{k})$ and also show that this bound is tight by constructing a family of sparse support whose expected number of real zeros is lower bounded by $\Omega(\sqrt{k})$. Our main technique is an alternative formulation of the Kac integral by Edelman-Kostlan which allows us to bound the expected number of zeros of $f$ in terms of the expected number of zeros of polynomials of lower sparsity.
Using our technique, we also recover the $O(\log n)$ bound on the expected number of real zeros of a dense polynomial of degree $n$ with coefficients coming from independent standard normal distributions.
\end{abstract}

\newcommand\A{\mathcal{A}}
\newcommand\C{\mathcal{C}}
\newcommand\D{\mathcal{D}}
\newcommand\LL{\mathcal{L}}
\newcommand\M{\mathcal{M}}
\newcommand\RR{\mathcal{R}}
\newcommand\T{\mathcal{T}}
\newcommand{\card}[1]{|#1|}    
\newcommand{\dpri}[1]{#1^{\prime\prime}}
\newcommand{\pri}[1]{#1^\prime}
\newcommand{\N}{\mathbb{N}} 
\newcommand{\CC}{\mathbb{C}}
\newcommand{\R}{\mathbb{R}}
\newcommand{\Z}{\mathbb{Z}}
\newcommand{\Dphi}{\mathcal{D}^{\emptyset}_n}
\newcommand{\Dabar}{\mathcal{D}^{\overline{a}}_n}
\newcommand{\Dbbar}{\mathcal{D}^{\overline{b}}_n}
\newcommand{\Dnq}{\mathcal{D}^{Q}_n}
\newcommand{\norm}[1]{\left\lVert#1\right\rVert}
\newcommand{\Hb}{\mathbb{H}}
\newcommand{\Q}{\mathcal{Q}}
\newcommand{\la}{\langle}
\newcommand{\ra}{\rangle}
\newcommand{\Def}{\mathrm{Def}}
\newcommand{\Deft}{\widetilde{\mathrm{Def}}}
\newcommand{\Atphi}{At_n^{\emptyset}}
\newcommand{\At}{At_n^{\overline{a}}}
\newcommand{\Col}{\mathrm{Color}(n,\overline{a})}
\newcommand{\Split}{\mathrm{Split}^{\overline{a}}_n}
\newcommand{\LC}{\chi_{T}}
\newcommand{\GC}{\chi_n^{\overline{a}}}
\renewcommand{\qedsymbol}{$\blacksquare$}
\newcommand{\Rplus}{[0, \infty)}
\newcommand{\Slz}{SL\_2(\Z)}
\newcommand{\PP}{\mathcal{P}}
\newcommand{\res}{\upharpoonright}
\newcommand{\MLR}{Martin L\"of random}
\newcommand{\lep}{\stackrel{\scriptstyle +}{\le}}
\newcommand{\mmid}{\quad\mid\quad}
\newcommand*\quot[2]{{^{\textstyle #1}\big/_{\textstyle #2}}}

\section{Introduction}
\label{sec:intro}

Understanding the number of real zeros of a given real univariate polynomial has always been of interest, both from a theoretical as well as an application point of view in science, engineering and mathematics. The arithmetic of sparse polynomials has been of special interest in computer science and the algorithms for efficiently finding roots of sparse polynomials have been widely studied (see \cite{cuckerKS99, lenstra99, rouillier2004,  eigenwillig2008real, Sagraloffissac14, SagraloffMjsc16}).

\subsection{Zeros of random sparse univariate polynomials}

In order to gain a better understanding of the behavior of the number of real zeros for sparse polynomials and its generalizations, we study the case of a single univariate sparse random polynomial. For simplicity, in this article, we only consider the case when the coefficients are identically distributed independent standard normal random variables. 

With respect to this consideration, the dense case\footnote{i.e. there is no restriction on the sparsity, thus we have a polynomial $f$ of degree $n$ with all its $n+1$ coefficients as standard normal random variables} has been extensively studied and is well understood. This problem was first considered in a series of works by  Littlewood and Offord \cite{liloff,liloff43} who proved a $O(\log^2(n))$ upper bound on the expected number of real zeros in the dense case when the coefficients are from bernoulli($\{-1,1\}$), standard gaussian and uniform distributions. In 1943, Kac \cite{kac} proved that the expected number of real zeros for degree $n$ polynomial with coefficients drawn form standard normal distribution is: $$\left( \frac{2}{\pi}+o(1) \right) \log(n).$$ Later, in a series of works, Offord, Erd\H{o}s, Stevens, Ibragimov and Maslova \cite{erdos56,stevens,Ibr71} extended the results to other more general distributions including but not restricted to bernoulli (\{-1,1\}) and uniform distributions. Interested readers can look at the article by Erd\'elyi \cite{erdelyi} for more recent results in this direction. 
In 1995, Edelman and Kostlan \cite{edelmankostlan1995} gave an alternative, simpler derivation for the Kac bound using geometric methods, in addition to providing essential insights to the integral and numerous generalizations in a variety of cases. For this article, the works in \cite{kac,edelmankostlan1995} are most relevant. It seems very surprising that there are so few real zeros in the random case.

In the sparse case, there is a line of work considering the case of the multivariate system of random equations (for instance see \cite{fewnomials, rojas, rojasmala}). However their focus is  different and we are not aware of any useful adaptations to the univariate case. In fact, we do not know of any such progress until the recent work of B\"urgisser, Erg\"ur and Tonelli-Cueto \cite{burgisserergurcueto2018} which showed that for a random $k$-sparse univariate polynomial, the expected number of real roots in the standard normal case, is bounded by $\frac{4}{\pi} \sqrt{k} \log k$,\footnote{unless stated otherwise, the base of the logarithm in this article is always $e$} thus showing that in this setting, the number of real zeros is much less than the Descartes' bound. 

\subsection{Zeros of sparse polynomials}
A lot of the polynomials that we encounter in practice are \textit{sparse}, i.e. the number of monomials in them is considerably smaller than the degree of the polynomial. This motivates one to study the question for the sparse polynomials. Descartes' famous rule of signs from the 17th century \cite{descartes1886geometrie} already sheds some light by bounding the number of non-zero real zeros of a $k$-sparse $f  \in \R[x]$ \footnote{throughout this article, polynomials considered are over reals and have degree $n$ with $n>>k$. } by $2k-2$. There are polynomials which achieve this bound too.
Having some understanding on the number of real roots of $k$-sparse polynomials, it makes sense to ask the same question for generalizations. However, if we consider the first non-trivial generalization, i.e. if we consider the polynomial $fg+1$, where $f$ and $g$ are both $k$-sparse, our understanding seems very limited. To the best of our knowledge, no bound better than the one given by Descartes' rule of sign is known in this case, in particular, no sub-quadratic bound is known. We also do not know of any example where the number of real roots of $fg+1$ is super-linear in $k$.

\subsection{Connections to algebraic complexity theory: Real Tau Conjecture}

A strong motivation for computer scientists to consider generalizations like the above was provided in 2011 by Koiran \cite{koiran11}, when he considered the number of real zeros of the sum of products of sparse polynomials. He formulated \textit{the real $\tau$-conjecture} claiming that if a polynomial is given as 
\[ f= \sum_{i =1}^{m} \prod_{j=1}^{t} f_{ij} \] where all $f_{ij}$'s are $k$-sparse, then the number of real zeros of $f$ is bounded by a polynomial in $mkt$. Thus the conjecture claims that a univariate polynomial computed by a depth-4 arithmetic circuit (see \cite{sy10, saptharishi2015survey} for background on arithmetic circuits) with the fan-in of gates at the top three layers being bounded by $m, t$ and $k$ respectively will have $O((mkt)^c)$ real zeros for some positive constant $c$. Notice that applying Descartes' bound only gives an exponential bound on the number of real zeros of $f$, since a-priori the sparsity bound that we can achieve for $f$ is only $O(mk^{t})$. 

What is of particular interest is the underlying connection of this conjecture to the central question of algebraic complexity theory. Koiran showed that proving the conjecture implies a superpolynomial lower bound on the arithmetic circuit complexity of the permanent, hence establishing the importance of the question of understanding real roots of sparse polynomials from the perspective of theory of computation as well. In fact this connection is what inspired the authors to investigate the problems considered in this article.

The real $\tau$-conjecture itself was inspired by the Shub and Smale's $\tau$-conjecture \cite{tauconj} which asserts that the number of integer zeros of a polynomial with arithmetic circuit complexity bounded by $s$ will be bounded by a polynomial in $s$. 
This conjecture also implies a super-polynomial lower bound on the arithmetic circuit size of the permanent \cite{burgisserdefining} and also implies $\mathrm{P}_{\CC} \neq \mathrm{NP}_{\CC}$ in the Blum-Shub-Smale model of computation (see \cite{tauconj, bss}). 
Koiran's motivation was to connect the complexity theoretic lower bounds to the number of real zeros instead of the number of integer zeros, because the latter takes one to the realm of number theory where problems become notoriously hard very easily.

While the real $\tau$-conjecture remains open (see \cite{hrubes13, newton, wronskian} for some works towards it), Briquel and B\"urgisser \cite{briquel18} showed that the conjecture is true in the average case, i.e. they show that when the coefficients involved in the description of $f$ are independent Gaussian random variables, then the expected number of real zeros of $f$ is bounded by $O(mkt)$.

\subsection{Our contributions}
Before we state our results we set up some notations.
Consider a set $S=\{e_1,\ldots,e_k\} \subseteq \N$ of natural numbers. For
such a set $S$, one asks how many roots (in expectation) of the random
polynomial $f_{S}=\sum_{i=1}^{k}a_{i}x^{e_{i}}$ (here $a_{i}$'s
are independent standard normals) are real. For an open interval $I\subseteq\R$,
we use $z_{S}^{I}$ to denote the expected number of roots of $f_{S}$
in $I$.  To avoid some degeneracy issues, we always assume
$0\not\in I$, this assumption allows us to assume that the smallest element
of $S$ is zero. In this paper, we are only concerned with the case
when $I=(0,1)$. See Remark \ref{rem:rem1} on why this is sufficient. When $I=(0,1)$, we simply use $z_{S}$ to denote $z_{S}^{I}$.

Our main contribution is the improvement on the bound on the expected number of real zeros of a random $k$-sparse polynomial $f$ and proving that this is the best one can do. 

\begin{theorem}\label{theorem:main} 
Let $S\subseteq\N$ be any set as
above with $\card S=k$, then we have $
z_{S}\leq\frac{2}{\pi}\sqrt{k-1}.$
\end{theorem}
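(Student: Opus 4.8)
The starting point is the Edelman--Kostlan reformulation of the Kac integral \cite{kac, edelmankostlan1995}: if $K(s,t)=\sum_{i=1}^{k}s^{e_i}t^{e_i}$ is the covariance kernel of the Gaussian field $f_S$, then
\[
z_S=\frac{1}{\pi}\int_0^1\sqrt{\frac{\partial^2}{\partial s\,\partial t}\log K(s,t)\;\Big|_{s=t=x}}\;dx .
\]
A short computation gives $\frac{\partial^2}{\partial s\,\partial t}\log K(s,t)\big|_{s=t=x}=x^{-2}\,\mathrm{Var}(E_x)$, where $E_x$ is the random variable taking value $e_i$ with probability proportional to $x^{2e_i}$. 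Substituting $x=e^{-\beta/2}$ turns this into the Gibbs weight $p_i(\beta)\propto e^{-\beta e_i}$ and yields the clean form
\[
z_S=\frac{1}{2\pi}\int_0^\infty\sqrt{\mathrm{Var}_S(\beta)}\;d\beta ,
\]
where $\mathrm{Var}_S(\beta)$ is the variance of this $k$-atom distribution. So everything reduces to estimating this integral; all integrals below converge since $\mathrm{Var}_S(\beta)$ is bounded near $\beta=0$ and decays exponentially as $\beta\to\infty$.

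The engine of the proof is a ``peeling'' inequality that replaces $S$ by $S'=S\setminus\{e_k\}$, i.e.\ removes the \emph{largest} exponent. For each fixed $\beta$, conditioning the Gibbs distribution of $S$ on $\{E_x\neq e_k\}$ is exactly the Gibbs distribution of $S'$, so the law of total variance gives
\[
\mathrm{Var}_S(\beta)=(1-q)\,\mathrm{Var}_{S'}(\beta)+q(1-q)\bigl(e_k-\mu_{S'}(\beta)\bigr)^2 ,
\]
where $q=q(\beta)$ is the probability of $e_k$ and $\mu_{S'}(\beta)$ is the mean of the $S'$-distribution. From $\sqrt{a+b}\le\sqrt a+\sqrt b$ (for $a,b\ge 0$) and $1-q\le 1$ one gets $\sqrt{\mathrm{Var}_S(\beta)}\le\sqrt{\mathrm{Var}_{S'}(\beta)}+\sqrt{q(1-q)}\,\bigl|e_k-\mu_{S'}(\beta)\bigr|$, hence $z_S\le z_{S'}+\mathrm{(II)}$ with $\mathrm{(II)}=\frac{1}{2\pi}\int_0^\infty\sqrt{q(1-q)}\,|e_k-\mu_{S'}(\beta)|\,d\beta$.

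To evaluate $\mathrm{(II)}$ set $\rho(\beta)=\log\frac{q(\beta)}{1-q(\beta)}$. A short calculation gives $\rho'(\beta)=\mu_{S'}(\beta)-e_k$, which is strictly negative because $e_k$ exceeds every exponent of $S'$; this monotonicity is the crucial point. Since also $\sqrt{q(1-q)}=\frac{1}{2\cosh(\rho/2)}$, the integrand of $\mathrm{(II)}$ equals $\frac{-\rho'(\beta)}{2\cosh(\rho(\beta)/2)}$, and $\rho$ runs monotonically from $\rho(0)=-\log(k-1)$ down to $-\infty$. Using $\int\frac{d\rho}{\cosh(\rho/2)}=4\arctan\!\bigl(e^{\rho/2}\bigr)$ we obtain
\[
\mathrm{(II)}=\frac{1}{4\pi}\int_{-\infty}^{-\log(k-1)}\frac{d\rho}{\cosh(\rho/2)}=\frac{1}{\pi}\arctan\frac{1}{\sqrt{k-1}}\le\frac{1}{\pi\sqrt{k-1}} .
\]
Iterating (repeatedly peel off the current largest exponent until a single monomial remains, for which $z=0$) gives $z_S\le\frac{1}{\pi}\sum_{j=1}^{k-1}\arctan\frac{1}{\sqrt{j}}\le\frac{1}{\pi}\sum_{j=1}^{k-1}\frac{1}{\sqrt j}\le\frac{1}{\pi}\int_0^{k-1}\frac{dx}{\sqrt x}=\frac{2}{\pi}\sqrt{k-1}$, which is the claimed bound.

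The main obstacle — and where a naive attempt fails — is making the correction term $\mathrm{(II)}$ \emph{small enough to be summable}. Estimating the ``extra'' piece of the total-variance identity crudely (e.g.\ by just $0\le q\le1$) produces a constant correction $\approx\frac14$ at every peeling step, which accumulates to a useless linear bound $O(k)$. What rescues the argument is that removing the \emph{top} exponent (i) keeps $\rho$ monotone, so $\mathrm{(II)}$ is genuinely an integral of $\operatorname{sech}$ over a half-line, and (ii) forces $q(0)=\tfrac1k$, so that half-line is $(-\infty,-\log(k-1)]$ and the integral is only $\Theta(k^{-1/2})$; peeling the bottom exponent or a middle one destroys one of these two features and the bound collapses. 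The remaining ingredients — the Edelman--Kostlan normalization, the variance computation, and the elementary estimates $\arctan y\le y$ and $\sum_{j\le m}j^{-1/2}\le 2\sqrt m$ — are routine.
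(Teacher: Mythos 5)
Your proof is correct and follows essentially the same route as the paper's: the law-of-total-variance decomposition you apply to the Gibbs weights is exactly the paper's Proposition~\ref{lem:abcdidentity} (the ``$abcd$ identity'') applied to $\mathcal{I}(g_{S'\sqcup\{e_k\}})$ after the change of variables $x=e^{-\beta/2}$ (noting $\mathcal{I}(g_S(x))=\tfrac{4}{x^2}\mathrm{Var}(E_x)$), and both arguments peel off the largest exponent to gain $\tfrac{1}{\pi}\arctan(1/\sqrt{j})$ per step, then sum and compare to $\int_0^{k-1}x^{-1/2}\,dx$. Your Gibbs/variance framing and the Gudermannian antiderivative $4\arctan(e^{\rho/2})$ give a clean conceptual re-packaging of the paper's $u=\sqrt{g_{\{a\}}/g_S}$ substitution, but the decomposition, the $\arctan$ evaluation, the monotonicity observation that makes the integral a single $\mathrm{sech}$ integral over a half-line, and the final summation are identical to Lemmas~\ref{lem:increasekbyone} and~\ref{lem:randomrealrootcardinalitytwo} and Theorem~\ref{thm:sqrtkbound}.
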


\begin{remark} \label{rem:rem1}
Since our bound in Theorem \ref{theorem:main} only depends on the size of $S$, and not on the structure of $S$, we get that $z_{S}^{\R} = 4z_{S}^{(0,1)}$. For $S=\{e_1, \ldots, e_k\}$, $z_{S}^{(1,\infty)}$ is equal to $z_{S'}^{(0,1)}$ for $S'=\{n-e_1, \ldots, n-e_k\}$  by replacing $x$ by $\frac{1}{x}$ and multiplying by $x^n$, where $n$ is the degree of $f_{S}$. Also $z_{S}^{(-\infty,0)} = z_{S}^{(0, \infty)}$ by replacing $x$ by $-x$.
\end{remark}

\begin{theorem}\label{theorem:lowerbound}
There exists a sequence of sets $S_k\subset\mathbb{N}$ for $k\geq1$ with $|S_k|=k+2$ and a constant $c>0$ such that $z_{S_k}\geq c\cdot\sqrt{k}$, for large enough $k$. 
\end{theorem}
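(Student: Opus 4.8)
The plan is to use the Edelman--Kostlan reformulation of the Kac integral (cf.\ \cite{edelmankostlan1995}), which --- for a support $S$ whose smallest element is $0$ --- expresses $z_S$ as $\tfrac1\pi$ times the arc length, over $t\in(0,1)$, of the normalized moment curve $t\mapsto (t^{e})_{e\in S}/\norm{(t^{e})_{e\in S}}$. Substituting $t=e^{-u}$ turns this into the convenient form
\[
z_S \;=\; \frac1\pi\int_0^\infty \sigma(u)\,du ,
\]
where $\sigma(u)$ is the standard deviation of a random exponent of $S$ drawn from the probability measure $\mu_u$ on $S$ with $\mu_u(e)\propto e^{-2eu}$; this is the same formula that drives Theorem~\ref{theorem:main}. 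The idea is now to design $S_k$ so that, as $u$ runs through a geometrically spaced family of ``scales'', the measure $\mu_u$ repeatedly keeps a $\Theta(1/j)$ fraction of its mass at distance $\Theta(1/u)$ from the rest, which forces $\sigma(u)=\Omega\!\bigl((1/u)/\sqrt{j}\bigr)$ throughout an interval of $u$'s of relative width bounded below by an absolute constant, hence contributing $\Omega(1/\sqrt{j})$ to the integral; summing over the $\Theta(k)$ scales then gives $\Omega(\sqrt{k})$.

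Concretely, I would take $S_k = \{0\}\cup\{N^{j}: 1\le j\le k+1\}$, so that $\card{S_k}=k+2$, where $N=N(k)$ is a parameter that is large but allowed to grow slowly with $k$ (for instance the choice $N=k$ is comfortably sufficient for all large $k$). Fix absolute constants $0<a<b$, and for each $j\in\{1,\dots,k+1\}$ consider $u$ in the ``octave'' $I_j := [a\,N^{-j},\,b\,N^{-j}]$. On $I_j$ one checks the following uniform estimates: each of the $j$ exponents $0,N,\dots,N^{j-1}$ carries (unnormalized) $\mu_u$-weight between $e^{-2b/N}$ and $1$; the exponent $N^{j}$ carries weight between $e^{-2b}$ and $e^{-2a}$; and the exponents $\ge N^{j+1}$ carry total weight at most $(k+1-j)e^{-2aN}$, which is negligible once $N$ is large enough. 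Consequently $\mu_u(\{N^{j}\})=\Theta(1/j)$ and $\mu_u(\{0,N,\dots,N^{j-1}\})=\Theta(1)$, uniformly over $u\in I_j$.

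With these in hand, I would bound the variance from below using the identity $\sigma(u)^2 = \tfrac12\sum_{e,e'\in S}\mu_u(e)\mu_u(e')(e-e')^2$: discarding all but the pairs with $e=N^{j}$ and $e'\in\{0,N,\dots,N^{j-1}\}$ gives, for $u\in I_j$,
\[
\sigma(u)^2 \;\ge\; \mu_u(\{N^{j}\})\cdot\mu_u(\{0,N,\dots,N^{j-1}\})\cdot\bigl(N^{j}-N^{j-1}\bigr)^2 \;=\; \Omega\!\left(\frac{N^{2j}}{j}\right),
\]
so $\sigma(u)=\Omega(N^{j}/\sqrt{j})$ throughout $I_j$. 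Since $N>b/a$ the intervals $I_j$ are pairwise disjoint, and $\card{I_j}=(b-a)N^{-j}$, whence
\[
z_{S_k} \;=\; \frac1\pi\int_0^\infty \sigma(u)\,du \;\ge\; \frac1\pi\sum_{j=1}^{k+1}\card{I_j}\cdot\Omega\!\left(\frac{N^{j}}{\sqrt{j}}\right) \;=\; \Omega\!\left(\sum_{j=1}^{k+1}\frac{1}{\sqrt{j}}\right) \;=\; \Omega(\sqrt{k}),
\]
with the implicit constant independent of $k$, which is the assertion of the theorem for all large $k$.

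The step I expect to be the real obstacle is the uniform bookkeeping in the second paragraph: one must ensure that the tail weight from the exponents $\ge N^{j+1}$ is genuinely negligible \emph{simultaneously for all} $j$ --- this is precisely why $N$ must be allowed to grow with $k$, since for a fixed $N$ the $\Theta(k)$ large exponents could together contribute weight comparable to that of the $\Theta(j)$ small ones when $j$ is small --- and that the product $\mu_u(\{N^{j}\})\cdot j$ is squeezed between two absolute positive constants on all of $I_j$, not merely at a single point. Once those two-sided estimates are in place, the disjointness of the octaves, the variance lower bound, and the final summation are all routine, and small values of $k$ are irrelevant since the statement concerns only large $k$.
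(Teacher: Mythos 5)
Your proof is correct, and it takes a genuinely different route from the paper's. You recast the Edelman--Kostlan integral via the substitution $t=e^{-u}$ into the form $z_S=\frac{1}{\pi}\int_0^\infty\sigma(u)\,du$ where $\sigma(u)^2$ is the variance of the exponent under the Gibbs-type measure $\mu_u(e)\propto e^{-2eu}$; this identity is easily checked since $\mathcal{I}(g_S)(t)=t^{-2}\frac{d^2}{du^2}\log g_S(e^{-u})=4\sigma(u)^2/t^2$ and $dt=-t\,du$. You then analyze the integral \emph{globally} by partitioning the $u$-axis into disjoint octaves $I_j=[aN^{-j},bN^{-j}]$ and showing each contributes $\Omega(1/\sqrt{j})$, using the pair-variance identity $\sigma^2=\frac12\sum_{e,e'}\mu_u(e)\mu_u(e')(e-e')^2$ and the uniform estimates $\mu_u(N^j)=\Theta(1/j)$, $\mu_u(\{0,\dots,N^{j-1}\})=\Theta(1)$ on $I_j$. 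The paper instead proceeds \emph{iteratively}: it reuses the union-decomposition of $\mathcal{I}(g_{S\cup\{a\}})$ from the upper-bound machinery (Proposition~\ref{lem:abcdidentity}, Lemma~\ref{lem:increasekbyone}), and shows that appending a new largest exponent $a$ increases $z$ by at least roughly $\frac{1}{\pi}\arctan(c'/\sqrt{k})$, after a controlled loss near $t=1$ (Lemmas~\ref{lem:intI} and~\ref{lem:intW}); this additive gain is accumulated over the second half of the iterations to get $\Omega(\sqrt{k})$. Your support $S_k=\{0\}\cup\{N^j:1\le j\le k+1\}$ with $N=N(k)\to\infty$ (e.g.\ $N=k$) is single-exponentially spaced with a $k$-dependent base, while the paper's $\{0,1\}\cup\{2^{2^i}\}$ is doubly-exponential with a fixed base; both choices make the relevant scales well-separated. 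Your approach buys a more transparent ``each scale contributes $1/\sqrt{j}$ roots'' picture at the cost of the change of variables and the probabilistic reformulation; the paper's buys reuse of its union lemma and keeps the bookkeeping entirely in the $t$ variable. Both are complete; your remaining concern about the tail from exponents $\ge N^{j+1}$ is indeed the only place where $N$ must grow with $k$, and $N=k$ (or even $N=\Theta(\log k)$ with a large enough constant) handles it uniformly in $j$.
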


Theorem \ref{theorem:lowerbound} shows that the bound obtained in Theorem \ref{theorem:main} is tight and cannot be reduced further for an arbitrary $S\subset \mathbb{N}$.

Using our techniques, we confirm the intuition from the dense case that in expectation, all the roots are concentrated around $1$ i.e. for any small constant $\epsilon >0$, the expected number of roots in $(0,1-\epsilon)$ is bounded by a constant independent of $n$ and $k$.

\begin{theorem}
\label{theorem:rootsfarmfromone} For a fixed $\epsilon>0$ and any $S\subseteq\N$ as
above, we have $$z_{S}^{(0,1-\epsilon)}\leq\frac{1}{2\pi}\left(\log\left(\frac{2}{\epsilon}\right)+\frac{4}{\sqrt{\epsilon}}-4\right).$$
\end{theorem}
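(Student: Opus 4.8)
The plan is to run the Edelman--Kostlan reformulation of the Kac integral (the same engine behind Theorems~\ref{theorem:main} and~\ref{theorem:lowerbound}) and then show that, once the smallest exponent of $S$ is $0$, the root density of $f_S$ is dominated on $(0,1-\epsilon)$ by a completely explicit function that is independent of $k$ and of $\deg f_S$, whose integral is the claimed bound.

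\textbf{Step 1 (reduction to a density estimate).} Writing $S=\{0=e_1<e_2<\cdots<e_k\}$, the Edelman--Kostlan formula gives $z_S^{(0,1-\epsilon)}=\int_0^{1-\epsilon}\rho_S(x)\,dx$ with $\rho_S(x)=\frac1{\pi}\sqrt{\partial_s\partial_t\log K(s,t)\big|_{s=t=x}}$, where $K(s,t)=\sum_i s^{e_i}t^{e_i}$ is the covariance kernel of $f_S$. A direct differentiation identifies the integrand as $\rho_S(x)=\frac1{\pi x}\sqrt{\mathrm{Var}_{p_x}}$, where $p_x$ is the probability law on $S$ with $p_x(e_i)\propto x^{2e_i}$ and $\mathrm{Var}_{p_x}$ denotes the variance of a $p_x$-distributed exponent. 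Everything thus reduces to an upper bound on $\mathrm{Var}_{p_x}$ that is uniform in $k$ and in $\deg f_S$.

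\textbf{Step 2 (the density bound --- the crux).} Using $e-e_1=\sum_{l=1}^{k-1}(e_{l+1}-e_l)\,\mathbf{1}[e\ge e_{l+1}]$ and the triangle inequality in $L^2(p_x)$ one gets, in the spirit of the paper's reduction to lower-sparsity pieces, $\sqrt{\mathrm{Var}_{p_x}}\le\sum_{l=1}^{k-1}(e_{l+1}-e_l)\sqrt{r_l(1-r_l)}$ with $r_l:=p_x(e\ge e_{l+1})=\big(\sum_{i\ge l+1}x^{2e_i}\big)\big/\sum_i x^{2e_i}$; each summand has the shape of the Edelman--Kostlan density of a $2$-term subpolynomial. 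The hypothesis $0\in S$ forces the normalizing sum $\sum_i x^{2e_i}\ge 1$, hence $r_l\le x^{2e_{l+1}}/(1-x^2)$ and $(e_{l+1}-e_l)\sqrt{r_l(1-r_l)}\le (e_{l+1}-e_l)\,x^{e_{l+1}}/\sqrt{1-x^2}$. Since $(e_{l+1}-e_l)\,x^{e_{l+1}-1}\le\sum_{m=e_l}^{e_{l+1}-1}x^m$, summing over $l$ telescopes to $\sum_{m\ge0}x^m=\frac1{1-x}$; after dividing by $\pi x$ this yields a bound of the form $\rho_S(x)\le\frac1{\pi(1-x^2)}+(\text{a correction of order }(1-x)^{-3/2})$, in which the leading term is exactly the (bound on the) density of the dense degree-$n$ polynomial --- which is simultaneously where the recovery of the $O(\log n)$ dense estimate and the ``roots concentrate near $1$'' intuition come from.

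\textbf{Step 3 (integrate).} Integrating $\frac1{\pi(1-x^2)}$ over $(0,1-\epsilon)$ gives $\frac1{2\pi}\log\frac{2-\epsilon}{\epsilon}\le\frac1{2\pi}\log\frac2\epsilon$, while integrating the $(1-x)^{-3/2}$ correction (via $\int_0^{1-\epsilon}(1-x)^{-3/2}\,dx=\tfrac2{\sqrt\epsilon}-2$, or the substitution $u=\sqrt{(1-x)/(1+x)}$) produces $\frac1{2\pi}\big(\tfrac4{\sqrt\epsilon}-4\big)$; the behaviour near $x=0$ is $O(x^{e_2-1})$ and only affects the (already absorbed) constant. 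Adding the two contributions gives the stated inequality. \emph{The main obstacle is Step 2}: obtaining a density bound that is uniform in the degree and the sparsity. The naive estimate $\mathrm{Var}_{p_x}\le\sum_i e_i^2 x^{2e_i}\le x^2(1+x^2)/(1-x^2)^3$ is already degree-free but loses a factor $1/(1-x^2)$; the role of the consecutive-pair telescoping is precisely to collapse the contributions of all the gaps of $S$ into a single geometric series and to isolate the dense density as the leading term, so that its integral is the logarithm rather than something growing like $\epsilon^{-1/2}\log(1/\epsilon)$.
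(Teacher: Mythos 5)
Your proposal is correct, and it takes a genuinely different route from the paper, so a comparison is in order. The paper's proof is more direct: starting from the Edelman--Kostlan integral, it simply \emph{drops} the negative term $(v_S\cdot v_S')^2$ under the square root, giving $z_S^{(0,1-\epsilon)}\le\frac1\pi\int_0^{1-\epsilon}\frac{\|v_S'\|_2}{\|v_S\|_2}\,dt$; then it uses $0\in S\Rightarrow\|v_S\|_2\ge1$ to throw away the denominator, and bounds $\|v_S'(t)\|_2^2=\sum_{e\in S}e^2t^{2e-2}$ by the \emph{dense} sum $\sum_{e>0}e^2t^{2e-2}\le\sum_{e>0}e(2e-1)t^{2e-2}=(1+3t^2)/(1-t^2)^3$, which gives, by $\sqrt{a+b}\le\sqrt a+\sqrt b$, exactly the pointwise bound $\rho_S(t)\le\frac1\pi\bigl(\frac1{1-t^2}+\frac{2t}{(1-t^2)^{3/2}}\bigr)$ that your split also arrives at. Your Step~1 is a correct identification of $\mathcal I(g_S)=\frac4{t^2}\mathrm{Var}_{p_t}$ (so you \emph{keep} the negative term rather than discard it), and Step~2 replaces the paper's ``compare to the dense sum'' with a telescoping decomposition $\sqrt{\mathrm{Var}}\le\sum_l(e_{l+1}-e_l)\sqrt{r_l(1-r_l)}$, using $0\in S$ to bound each tail $r_l$ and collapse the sum into the geometric series $\frac1{1-x}$. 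A pleasant consequence of retaining the negative term is that your pointwise density bound $\rho_S(x)\le\frac1{\pi(1-x)^{3/2}(1+x)^{1/2}}$ is in fact \emph{tighter} than the paper's (multiplying both by $(1-x)^{3/2}(1+x)^{3/2}$ reduces the comparison to $1+x\le\sqrt{1-x^2}+2x$, i.e.\ $1-x\le\sqrt{1-x^2}$), so integrating it directly via the $u=\sqrt{(1-x)/(1+x)}$ substitution gives the cleaner quantity $\frac1\pi\bigl(\sqrt{(2-\epsilon)/\epsilon}-1\bigr)\le$ the stated bound. The one place you should tighten the writeup is the transition from the bound $\frac1{\pi(1-x)^{3/2}(1+x)^{1/2}}$ to the claimed split ``$\frac1{\pi(1-x^2)}$ plus a $(1-x)^{-3/2}$ correction'': the correction term should be identified explicitly (e.g.\ $\frac{(1+x)^{1/2}-(1-x)^{1/2}}{(1-x)^{3/2}(1+x)}\le\frac{2x}{(1-x^2)^{3/2}}$, whose integral is $\frac2{\sqrt{\epsilon(2-\epsilon)}}-2\le\frac2{\sqrt\epsilon}-2$), since as written the constant in front of $(1-x)^{-3/2}$ is what determines whether you land below the stated $\frac4{\sqrt\epsilon}$ coefficient. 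What each approach buys: the paper's is shorter and requires nothing beyond the closed-form dense sum; yours gives a probabilistic reading of the Kac density and reuses the same consecutive-gap telescoping that powers the $O(\sqrt k)$ bound, making the relationship between Theorems~\ref{theorem:main} and~\ref{theorem:rootsfarmfromone} more transparent.
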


\subsection{Proof ideas}
Our main technical contribution is an alternative formulation of the Kac integral by Edelman-Kostlan, that we call the \textit{Edelman-Kostlan integral} (discussed in Section \ref{prelims}) presented in detail in Section \ref{sec:altform}. 

The formulation allows us to bound $z_{S_1 \sqcup S_2}$ in terms of the bounds on $z_{S_1}$ and $z_{S_2}$ (presented in subsection \ref{sec:setunionroots}). Thus we can build our $k$-sparse polynomial monomial-by-monomial. We show that every time we add a monomial, we do not increase the expected number of roots by a lot. A careful application of this idea yields the desired $O(\sqrt{k})$ bound (presented in Section \ref{sqrtkbound}). 

We also obtain a bound on $z_{S_1 + S_2}$ in terms of $z_{S_1}$ and $z_{S_2}$, where $S_1 + S_2$ is the set obtained as a result of the addition of elements of $S_1$ and $S_2$ (presented in subsection \ref{subsec:setadditionsroots}). Combining the bounds on $z_{S_1 + S_2}$ and $z_{S_1 \sqcup S_2}$ allows us to recover the $O(\log n)$ bound for the dense case i.e. $S = \{0,1, \ldots, n \}$, where we build up our set $S$ as a combination of unions and additions of sets (presented in Section \ref{lognbound}).

Further, the proof that all the roots are concentrated around $1$ follows from the analysis of an approximation of the Edelman-Kostlan integral. This approximation which is inspired by the one used in \cite{burgisserergurcueto2018} makes the analysis of the integral simpler.

Finally in Section \ref{lb}, we show that we cannot obtain a better bound for an arbitrary $S\subset\mathbb{N}$. We show this by applying the idea of monomial-wise construction of a polynomial (presented in Section \ref{sec:setunionroots}) on a carefully chosen monomial sequence, thus  proving Theorem \ref{theorem:lowerbound}.  
\subsection{Previous work: known bounds on \texorpdfstring{$z_{S}^I$}{the expected number of real zeroes}}
In this subsection, we present the state of the art prior to this work for $z_{S}^{I}$.

For $S=\{0,1,2,\dots,n\}$ and $I = \R$, $z_S^{I}$ is known to be bounded by $O(\log n)$.
\begin{theorem}[\cite{edelmankostlan1995, kac}]
\label{theorem:densekostlanbound}If $S=\{0,1,2,\dots,n\}$ then 
\[
z_{S}^{\R}=\frac{2}{\pi}\log(n)+C_{1}+\frac{2}{n\pi}+O\left(\frac{1}{n^{2}}\right).
\]

Here $C_{1}\approx0.6257358072\dots$.
\end{theorem}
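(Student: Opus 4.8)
The plan is to exploit the explicit density of real zeros provided by the Edelman--Kostlan formula for the dense case. When $S=\{0,1,\dots,n\}$, the random polynomial $f_S = \sum_{i=0}^n a_i x^i$ has, by the Edelman--Kostlan/Kac formulation, expected number of real zeros on $\R$ equal to
\begin{equation}
z_S^{\R} = \frac{1}{\pi}\int_{-\infty}^{\infty} \sqrt{\frac{\partial^2}{\partial x\,\partial y}\log K(x,y)\Big|_{y=x}}\; dx,
\end{equation}
where $K(x,y) = \sum_{i=0}^n (xy)^i = \frac{(xy)^{n+1}-1}{xy-1}$ is the covariance kernel. First I would carry out the differentiation of $\log K(x,x) = \log\frac{x^{2(n+1)}-1}{x^2-1}$ and extract the intensity function; this yields the classical closed form
\begin{equation}
z_S^{\R} = \frac{2}{\pi}\int_0^{\infty} \sqrt{\frac{1}{(x^2-1)^2} - \frac{(n+1)^2 x^{2n}}{(x^{2(n+1)}-1)^2}}\; dx,
\end{equation}
using the symmetry $x \mapsto -x$ to restrict to $[0,\infty)$ and picking up a factor of $2$. (One must check the integrand is genuinely the square root of a nonnegative quantity and that the apparent singularity at $x=1$ is removable, both of which follow from a Taylor expansion of $K$ near the diagonal at $x=1$.)

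Next I would split the integral at $x=1$ and use the substitution $x\mapsto 1/x$ on $[1,\infty)$, which maps the integrand to itself (this is exactly the $z_S^{(1,\infty)} = z_{S'}^{(0,1)}$ symmetry of Remark \ref{rem:rem1} specialized to the symmetric set $S=\{0,\dots,n\}$), so that $z_S^{\R} = \frac{4}{\pi}\int_0^1 \phi_n(x)\,dx$ for the appropriate intensity $\phi_n$. Then I would perform the asymptotic analysis of $\int_0^1 \phi_n(x)\,dx$ as $n\to\infty$. The dominant contribution comes from a boundary layer of width $\Theta(1/n)$ near $x=1$, where after rescaling $x = 1 - t/n$ the integrand converges to a universal profile whose integral over $t\in(0,\infty)$ produces the leading term $\frac{2}{\pi}\log n$; away from this layer the integrand is $O(1)$ and integrable, contributing the constant $C_1$. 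To get the precise lower-order terms $C_1 + \frac{2}{n\pi} + O(1/n^2)$ I would expand $\phi_n$ in powers of $1/n$ uniformly on a suitable partition of $(0,1)$ — for instance handling $[0,1-n^{-1/2}]$ by a direct expansion and $[1-n^{-1/2},1)$ by the boundary-layer rescaling — and match the two regimes via an overlap argument (Euler--Maclaurin / Watson's-lemma-type estimates), tracking the error terms carefully.

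The main obstacle will be the careful bookkeeping of the asymptotic expansion to the stated order: getting the leading $\frac{2}{\pi}\log n$ is routine, but pinning down the exact constant $C_1 \approx 0.6257\ldots$ and, especially, the next term $\frac{2}{n\pi}$ with an $O(1/n^2)$ remainder requires a uniform expansion of the intensity function that controls the interaction between the boundary layer near $x=1$ and the bulk, ruling out any spurious $O(1/\sqrt n)$ or $O(\log n / n)$ contributions from the matching region. Since this is precisely the computation carried out by Kac and, more cleanly, by Edelman--Kostlan (Theorem \ref{theorem:densekostlanbound} is quoted from their work), an alternative — and for the purposes of this paper entirely sufficient — route is simply to invoke their analysis; the new content here is only the reorganization of the integral via the set-operation identities (Remark \ref{rem:rem1}) that make the dense case a clean instance of the Edelman--Kostlan framework used throughout.
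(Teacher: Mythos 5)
Your outline is a faithful reconstruction of the classical Kac/Edelman--Kostlan derivation: the covariance kernel $K(x,y)=\sum_{i=0}^n (xy)^i$, the density $\frac{1}{\pi}\sqrt{\partial_x\partial_y\log K|_{y=x}}$, the closed form with the two terms $\frac{1}{(x^2-1)^2}$ and $\frac{(n+1)^2x^{2n}}{(x^{2n+2}-1)^2}$, the reduction to $(0,1)$ via $x\mapsto -x$ and $x\mapsto 1/x$ (correctly identified with Remark \ref{rem:rem1}, since $S=\{0,\dots,n\}$ is self-reciprocal), and the boundary-layer analysis near $x=1$ are all exactly the ingredients of the cited computation, and your closing observation — that the theorem is quoted from \cite{edelmankostlan1995, kac} and may simply be invoked — is the stance the paper itself takes: the statement appears as a cited result with no proof in the body.

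The comparison worth making is with the appendix (Section \ref{lognbound}), where the paper ``recovers the classics'' by a genuinely different and far more elementary route: an induction on $n$ that decomposes $\{0,1,\dots,n\}$ either as the collision-free sum $\{0,\dots,a\}+\{0,a+1\}$ (odd $n$, handled by Lemma \ref{lem:sumofsets}) or as the disjoint union $\{0,\dots,2a-1\}\cup\{2a\}$ (even $n$, handled by Lemma \ref{lem:nocriticalpoints} and Corollary \ref{cor:mcriticalbound}). That argument buys simplicity and reuses the paper's set-operation machinery, but it only yields $z_S\le\frac{3}{4}\log_2 n$ — the right order of magnitude, with no access to the constant $\frac{2}{\pi}$, to $C_1$, or to the $\frac{2}{n\pi}$ term. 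Your route is the only one that can produce the precise expansion in the statement; the trade-off is that, as written, the decisive quantitative steps are deferred (``I would expand\dots tracking the error terms carefully''), so your text is a plan for the Edelman--Kostlan computation rather than a self-contained proof. Since the theorem is explicitly attributed to that source, deferring to it is legitimate here; just be aware that if you were required to prove the full expansion yourself, the matching argument that pins down $C_1$ and excludes intermediate-order contributions (e.g.\ $O(n^{-1/2})$ or $O(n^{-1}\log n)$ from the overlap region) is the entire substance of the proof and is not yet present.
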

Determining the value of $z_{S}^{I}$ for arbitrary sets $S$ remains
an open problem. Towards this the best bound known was the following result by  \cite{burgisserergurcueto2018}.
\begin{theorem}[{\cite[Theorem 1.3]{burgisserergurcueto2018}}]
\label{theorem:sparseburgisserbound}Let $S\subseteq\N$ be any set as
above with $\card S=k$ then we have
\[
z_{S}\leq\frac{1}{\pi}\sqrt{k}\log(k).
\]
\end{theorem}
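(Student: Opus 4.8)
The natural route, following the Kac--Rice approach of \cite{burgisserergurcueto2018} in the geometric form of Edelman and Kostlan \cite{edelmankostlan1995, kac}, is as follows. Write $S=\{e_1<\dots<e_k\}$ with $e_1=0$, let $K(s,t)=\mathbb{E}[f_S(s)f_S(t)]=\sum_{i=1}^{k}(st)^{e_i}$ be the covariance kernel, and for $x\in(0,1)$ let $E=E(x)$ be the $\{e_1,\dots,e_k\}$-valued random variable with law $p_i(x)=x^{2e_i}\big/\sum_{j}x^{2e_j}$. A direct computation identifies the Edelman--Kostlan integrand, $\partial_s\partial_t\log K(s,t)\big|_{s=t=x}=\operatorname{Var}_x(E)/x^{2}$ --- the squared speed at $x$ of the spherical curve $x\mapsto (x^{e_1},\dots,x^{e_k})/\|(x^{e_1},\dots,x^{e_k})\|$ --- so that
\[
z_S=\frac1\pi\int_0^1\frac{\sqrt{\operatorname{Var}_x(E)}}{x}\,dx
=\frac1\pi\int_0^\infty\sqrt{\operatorname{Var}_u(E)}\,du,
\]
the second equality by $x=e^{-u}$, after which $p_i(u)\propto e^{-2e_iu}$ is a one-parameter exponential family with $\tfrac{d}{du}\mathbb{E}_u[E]=-2\operatorname{Var}_u(E)$; integrating gives the global identity $\int_0^\infty\operatorname{Var}_u(E)\,du=\tfrac12\mathbb{E}_0[E]=\tfrac1{2k}\sum_i e_i$. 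First I would record these identities.

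Next I would bound $\sqrt{\operatorname{Var}_u(E)}$ through the gap structure of $S$. From $\operatorname{Var}(E)=2\iint_{s<t}F(s)\bigl(1-F(t)\bigr)\,ds\,dt$ applied to the step CDF $F$ of $E$, together with $F_l\bigl(1-F_{l'}\bigr)\le\sqrt{F_l(1-F_l)}\,\sqrt{F_{l'}(1-F_{l'})}$ for $l\le l'$ (which reduces to $F_l\le F_{l'}$), one gets $\sqrt{\operatorname{Var}_u(E)}\le\sum_{l=1}^{k-1}(e_{l+1}-e_l)\sqrt{F_l(u)\bigl(1-F_l(u)\bigr)}$, where $F_l(u)=\mathbb{P}_u(E\le e_l)$. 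For each $l$ the function $F_l$ increases from $l/k$ to $1$, with $\tfrac{d}{du}\log\tfrac{F_l}{1-F_l}=2\bigl(\mathbb{E}_u[E\mid E>e_l]-\mathbb{E}_u[E\mid E\le e_l]\bigr)\ge 2(e_{l+1}-e_l)$, so its transition window has width $O\bigl(1/(e_{l+1}-e_l)\bigr)$ and $\int_0^\infty\sqrt{F_l(1-F_l)}\,du\le\pi/\bigl(2(e_{l+1}-e_l)\bigr)$; plugged back in this already yields the elementary bound $z_S\le\tfrac{k-1}{2}$. To reach $\tfrac1\pi\sqrt k\log k$ one must do genuinely better, exploiting that the $k-1$ gap contributions cannot all be of full size at once: group the gaps into $O(\log k)$ families --- say by the dyadic size of $e_{l+1}$ or of $e_{l+1}-e_l$ --- inside which the transition windows are essentially disjoint, and on each family apply a single Cauchy--Schwarz, fed with the global identity restricted to that family together with the bound $k$ on the number of gaps, which costs only a factor $\sqrt k$ per family. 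Summing the $O(\log k)$ families, and disposing of the regimes $u\to0$ (where $E$ is nearly uniform) and $u\to\infty$ (where $E$ concentrates at $e_1=0$) as $O(1)$ contributions, should give $z_S\le\tfrac1\pi\sqrt k\log k$; keeping the constants honest then produces exactly the stated inequality.

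The step I expect to be the main obstacle is precisely this grouping-plus-Cauchy--Schwarz. The pointwise estimate $\operatorname{Var}_u(E)=O(u^{-2})$ is scale invariant and by itself only reproduces the dense bound $z_S=O(\log n)$ of Theorem \ref{theorem:densekostlanbound}, while the crude per-gap estimate above only reproduces $z_S=O(k)$; extracting $\sqrt k\log k$ forces one to quantify how sparsity helps --- a large multiplicative gap in $S$ makes $\operatorname{Var}_u$ behave there like a two-point variance, which integrates to $O(1)$ regardless of the gap size --- so that the $\Theta(\log n)$ worth of dyadic scales collapses to the $\Theta(\log k)$ that actually carry mass, each absorbed by one Cauchy--Schwarz losing only $\sqrt k$. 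Making this window-overlap analysis precise is the technical heart; the remaining ingredients (the Edelman--Kostlan identity, the change of variables, the CDF variance formula, the two-point tails, and the constant $\tfrac1\pi$) are routine.
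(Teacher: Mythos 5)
Your opening identities are correct: the rewriting of the Edelman--Kostlan integrand as $\operatorname{Var}_x(E)/x^2$ for the exponential family $p_i(x)\propto x^{2e_i}$, the substitution $x=e^{-u}$, the global identity $\int_0^\infty\operatorname{Var}_u(E)\,du=\tfrac1{2k}\sum_ie_i$, and the per-gap estimate yielding $z_S\le\tfrac{k-1}{2}$ all check out. But the step that actually produces $\sqrt{k}\log k$ --- the dyadic grouping of gaps into $O(\log k)$ families with ``essentially disjoint transition windows'' followed by one Cauchy--Schwarz per family --- is not carried out, and you say so yourself. As written it is a plan, not a proof: you never specify the grouping, never prove the disjointness of the windows, and never exhibit the inequality that converts the per-family data into a $\sqrt{k}$ loss. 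Note also that the most natural Cauchy--Schwarz here, $\sum_l(e_{l+1}-e_l)\sqrt{F_l(1-F_l)}\le\sqrt{|\mathcal F|}\bigl(\sum_l(e_{l+1}-e_l)^2F_l(1-F_l)\bigr)^{1/2}$, feeds back into $\sqrt{\operatorname{Var}_u(E)}$ itself and is circular, so the missing step is not a routine completion. Finally, even if the scheme were completed it would at best give $O(\sqrt k\log k)$ with an unspecified constant, not the stated $\tfrac1\pi\sqrt k\log k$.

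The gap is all the more conspicuous because the theorem has a three-line proof, which is the one reproduced in the appendix. Drop the negative term in the Edelman--Kostlan numerator to get
\[
z_S\le\frac1\pi\int_0^1\frac{\norm{\pri{v_S}(t)}_2}{\norm{v_S(t)}_2}\,\mathrm dt,
\]
then apply $\norm{x}_2\le\norm{x}_1\le\sqrt k\,\norm{x}_2$ in numerator and denominator to pass to $\ell_1$ norms at the cost of a single factor $\sqrt k$. Since all entries of $v_S(t)$ are nonnegative increasing on $(0,1)$, the integrand $\norm{\pri{v_S}(t)}_1/\norm{v_S(t)}_1$ is exactly $\tfrac{\mathrm d}{\mathrm dt}\log\norm{v_S(t)}_1$, and the integral telescopes to $\log\norm{v_S(1)}_1-\log\norm{v_S(0)}_1=\log k$ (using $0\in S$). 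This gives $z_S\le\tfrac1\pi\sqrt k\log k$ with the exact constant. Your probabilistic reformulation is the right machinery for going \emph{beyond} this bound (it is close in spirit to how the present paper gets $O(\sqrt k)$ via $\mathcal I(g_S)$), but for the stated theorem you have replaced a one-step norm inequality with a multi-stage argument whose hardest stage is left open.
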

For the sake of completeness, we present a proof for the above theorem in the appendix.

\section{Preliminaries}\label{prelims}

Since our method builds upon the Edelman-Kostlan method \cite{edelmankostlan1995} by a novel approach on analyzing their integral, it is essential to look at their method. In order to compute $z_{S}$ for $S = \{e_1, \ldots, e_k \}$, define a generalization of the moment curve $v_{S}$ as $v_{S}(t):=(t^{e_{1}},t^{e_{2}},\dots,t^{e_{k}})$. This allows the following expression
for $z_{S}^{I}$:
 
\begin{theorem}[{\cite[Theorem 3.1]{edelmankostlan1995}}]
\label{theorem:realrootsboundkostlan}For all sets $S\subseteq\N$, we have the following equality for $z_{S}^{I}$ .

\begin{equation}\label{zsreqinte}
z_{S}^{I}=\frac{1}{\pi}\intop_{I}\frac{\sqrt{(\norm{v_{S}(t)}_{2}\cdot\norm{\pri{v_{S}}(t)}_{2})^{2}-(v_{S}(t)\cdot\pri{v_{S}}(t))^{2}}}{(\norm{v_{S}(t)}_{2})^{2}}\mathrm{d}t.
\end{equation}
\end{theorem}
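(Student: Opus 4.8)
The plan is to realize $f_S$ as a centered Gaussian field on $I$ and invoke the Kac--Rice formula; equivalently one can run Edelman--Kostlan's integral-geometric (sign-change) argument, which I sketch at the end. Write $f_S(t)=\la a,v_S(t)\ra$ with $a=(a_1,\dots,a_k)$ a standard Gaussian vector in $\R^k$ and $v_S(t)=(t^{e_1},\dots,t^{e_k})$. Under the standing assumptions $v_S(t)\neq 0$ for $t\in I$, so $t\mapsto f_S(t)$ is a smooth, nondegenerate, centered Gaussian process. For each fixed $t$ the pair $(f_S(t),\pri{f_S}(t))=(\la a,v_S(t)\ra,\la a,\pri{v_S}(t)\ra)$ is jointly Gaussian with mean $0$ and covariance matrix
\[
\Sigma(t)=\begin{pmatrix}\norm{v_S(t)}_2^2 & v_S(t)\cdot\pri{v_S}(t)\\[2pt] v_S(t)\cdot\pri{v_S}(t) & \norm{\pri{v_S}(t)}_2^2\end{pmatrix}.
\]

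Next I would apply the Kac--Rice formula: for an a.s.\ $C^1$ centered Gaussian process with $\mathrm{Var}(f_S(t))>0$ on $I$,
\[
z_S^I=\intop_I \mathbb{E}\bigl[\,|\pri{f_S}(t)|\;\big|\;f_S(t)=0\,\bigr]\,p_{f_S(t)}(0)\,\mathrm{d}t,
\]
where $p_{f_S(t)}$ is the density of $f_S(t)$. Here one must check the hypotheses of Kac--Rice, namely sample-path regularity and the fact that the zeros of $f_S$ are a.s.\ simple and finite in number; both hold because $f_S$ is a polynomial of degree at most $\max S$, so finiteness and (a.s.) simplicity of the zero set are automatic, and the relevant exceptional set of $t$ (where $\Sigma(t)$ is singular, i.e.\ $v_S(t)$ and $\pri{v_S}(t)$ are parallel) is finite and contributes nothing.

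Then I would evaluate the two factors by standard bivariate Gaussian identities. The marginal density gives $p_{f_S(t)}(0)=(2\pi)^{-1/2}\norm{v_S(t)}_2^{-1}$. Conditioned on $f_S(t)=0$, the variable $\pri{f_S}(t)$ is centered Gaussian with variance equal to the Schur complement $\norm{\pri{v_S}(t)}_2^2-(v_S(t)\cdot\pri{v_S}(t))^2/\norm{v_S(t)}_2^2$, and $\mathbb{E}|Y|=\sigma\sqrt{2/\pi}$ for a centered Gaussian $Y$ of variance $\sigma^2$. Multiplying the two factors and simplifying,
\[
\mathbb{E}\bigl[\,|\pri{f_S}(t)|\;\big|\;f_S(t)=0\,\bigr]\,p_{f_S(t)}(0)=\frac{1}{\pi}\cdot\frac{\sqrt{\norm{v_S(t)}_2^2\,\norm{\pri{v_S}(t)}_2^2-(v_S(t)\cdot\pri{v_S}(t))^2}}{\norm{v_S(t)}_2^2},
\]
which is precisely the integrand in \eqref{zsreqinte}; integrating over $I$ concludes the proof.

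Alternatively, since the zero set of $f_S$ is scaling-invariant in $a$, a real zero of $f_S$ in $I$ is a $t\in I$ with $\gamma(t):=v_S(t)/\norm{v_S(t)}_2$ on the random hyperplane $a^\perp$; the direction $a/\norm a_2$ is uniform on $S^{k-1}$, so $a^\perp\cap S^{k-1}$ is a uniformly random great subsphere of codimension one, and the classical Poincar\'e/Crofton integral-geometry formula yields $z_S^I=\tfrac1\pi\intop_I\norm{\pri\gamma(t)}_2\,\mathrm{d}t$; a direct quotient-rule computation gives $\norm{\pri\gamma(t)}_2=\norm{v_S(t)}_2^{-2}\sqrt{\norm{v_S(t)}_2^2\norm{\pri{v_S}(t)}_2^2-(v_S(t)\cdot\pri{v_S}(t))^2}$, recovering the same formula. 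I expect the main obstacle to be not the algebra (which is routine bivariate-Gaussian bookkeeping) but the rigorous justification of whichever meta-formula one uses --- verifying the nondegeneracy and regularity hypotheses of Kac--Rice, or, in the geometric route, the measurability and multiplicity conventions behind the Crofton formula together with the argument that exceptional (parallel-velocity) points are negligible.
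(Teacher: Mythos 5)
The paper does not prove this theorem itself; it cites it as Theorem 3.1 of Edelman--Kostlan, so you are supplying a proof that the paper outsources to the literature. Both of your routes are correct. In the Kac--Rice route, the Schur-complement conditional variance $\norm{\pri{v_S}(t)}_2^2 - (v_S(t)\cdot\pri{v_S}(t))^2/\norm{v_S(t)}_2^2$ combined with $p_{f_S(t)}(0) = (2\pi)^{-1/2}\norm{v_S(t)}_2^{-1}$ and $\mathbb{E}|Y| = \sigma\sqrt{2/\pi}$ indeed collapses to the claimed integrand after clearing the $\norm{v_S}_2$ from inside the square root. The geometric sketch at the end --- projecting $v_S$ to the unit sphere and applying the Crofton/Poincar\'e formula for intersections of a curve with a uniformly random great hypersphere --- is in fact essentially the original Edelman--Kostlan derivation; your quotient-rule identity $\norm{\pri\gamma(t)}_2 = \norm{v_S(t)}_2^{-2}\sqrt{\norm{v_S(t)}_2^2\norm{\pri{v_S}(t)}_2^2 - (v_S(t)\cdot\pri{v_S}(t))^2}$ is the algebraic heart of that route and is correct. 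You also correctly identify the only genuine technical obligations: verifying the Kac--Rice hypotheses (a.s.\ simplicity and finiteness of zeros, nondegeneracy of $\Sigma(t)$ away from a negligible set), all of which are automatic here because $f_S$ is a nonzero polynomial whose degree is uniformly bounded by $\max S$ and whose coefficients have an absolutely continuous joint law. No gaps.
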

We refer to the above integral as the \textit{Edelman-Kostlan integral}.

The strength of this method is that the above integral is parameterized by the support $S$ and the interval $I$, thus allowing one to estimate the expected number of real zeros for any such arbitrary support and interval. In their paper, they compute the integral for $S = \{0,1,\ldots,k\}$ and $I=(0,1)$ and for these values showed that $z_{S}^{I}$ is bounded by $O(\log k)$. However, for arbitrary $S$
of cardinality $k$, the integral becomes quite complicated to analyze.

In \cite{burgisserergurcueto2018}, they get around this difficulty by upper bounding the integral. This is achieved by ignoring the negative term of the numerator and through some elementary norm inequalities leads to the $O(\sqrt{k} \log k)$ bound. 

We now state a basic technical proposition which will be useful in the proof of the main theorem.

\begin{proposition}
\label{lem:abcdidentity}The following identity is true for all $a,b,c,d$:

\[
\left(\frac{a+c}{b+d}\right)^{2}=\left(\frac{b}{b+d}\right)\left(\frac{a}{b}\right)^{2}+\left(\frac{d}{b+d}\right)\left(\frac{c}{d}\right)^{2}-\frac{1}{bd}\left(\frac{bc-ad}{b+d}\right)^{2}.
\]
\end{proposition}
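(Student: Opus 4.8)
The proposition is a purely algebraic identity, so the plan is to reduce it to a recognizable normal form and verify that form by a short expansion. Throughout one should assume $b,d,b+d\neq 0$ so that every fraction appearing is defined; the statement is vacuous otherwise, and I would state this proviso explicitly at the outset.

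First I would make the substitution $x:=a/b$, $y:=c/d$, and $\lambda:=b/(b+d)$, so that $1-\lambda=d/(b+d)$. A direct computation gives
\[
\lambda x+(1-\lambda)y=\frac{a}{b+d}+\frac{c}{b+d}=\frac{a+c}{b+d},
\]
so the left-hand side equals $(\lambda x+(1-\lambda)y)^2$ and the first two terms on the right-hand side equal $\lambda x^2+(1-\lambda)y^2$. For the last term, note that $bc-ad=bd(y-x)$ and $bd/(b+d)^2=\lambda(1-\lambda)$, hence
\[
\frac{1}{bd}\left(\frac{bc-ad}{b+d}\right)^{2}=\frac{bd\,(y-x)^{2}}{(b+d)^{2}}=\lambda(1-\lambda)(x-y)^{2}.
\]
Thus the claimed identity is equivalent to $(\lambda x+(1-\lambda)y)^2=\lambda x^2+(1-\lambda)y^2-\lambda(1-\lambda)(x-y)^2$, which is exactly the two-point case of $\mathbb{E}[X]^2=\mathbb{E}[X^2]-\operatorname{Var}(X)$ for the random variable taking value $x$ with probability $\lambda$ and $y$ with probability $1-\lambda$.

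It then remains to verify this normal form, which I would do by expanding the right-hand side: $\lambda x^2+(1-\lambda)y^2-\lambda(1-\lambda)(x^2-2xy+y^2)=\lambda^2x^2+(1-\lambda)^2y^2+2\lambda(1-\lambda)xy=(\lambda x+(1-\lambda)y)^2$. Alternatively, and perhaps cleaner for the write-up, one can skip the substitution entirely and just multiply both sides of the original identity by $bd(b+d)^2$, turning it into the polynomial identity $bd(a+c)^2=a^2d(b+d)+c^2b(b+d)-(bc-ad)^2$; expanding the right-hand side collapses it to $a^2bd+c^2bd+2abcd=bd(a+c)^2$. There is no genuine obstacle here — the only point requiring a word of care is the implicit nonvanishing of $b$, $d$, and $b+d$.
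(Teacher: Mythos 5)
Your proof is correct, but your primary route is genuinely different from the paper's. The paper simply puts the right-hand side over the common denominator $(b+d)^2 bd$, expands the numerator $a^2d(b+d)+c^2b(b+d)-(bc-ad)^2$, and watches it collapse to $bd(a+c)^2$ — which is precisely your ``alternative'' at the end, so you already know both. What your main route adds is conceptual content: by substituting $x=a/b$, $y=c/d$, $\lambda=b/(b+d)$, you expose the identity as the two-point case of $\mathbb{E}[X]^2=\mathbb{E}[X^2]-\operatorname{Var}(X)$, i.e.\ $(\lambda x+(1-\lambda)y)^2=\lambda x^2+(1-\lambda)y^2-\lambda(1-\lambda)(x-y)^2$. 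That explains \emph{why} the correction term $-\tfrac{1}{bd}\bigl(\tfrac{bc-ad}{b+d}\bigr)^2$ has the shape it does (it is a variance), which is useful for intuition about how the term is later used in Lemma \ref{lem:unions1s2correctbound}. The trade-off is that the substitution route silently divides by $b$, $d$, and $b+d$; you flag this, but the paper's direct expansion proves the stronger polynomial identity $bd(a+c)^2=a^2d(b+d)+c^2b(b+d)-(bc-ad)^2$ with no nonvanishing hypotheses at all, from which the rational-function form follows wherever it is defined. In a write-up I would keep your variance observation as a remark for intuition but state and prove the cleared-denominator polynomial identity, exactly as the paper does and as you suggest in your last sentences.
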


\begin{proof}
The RHS of above equation can we written as:

\begin{align*}
\left(\frac{b}{b+d}\right)\left(\frac{a}{b}\right)^{2}+\left(\frac{d}{b+d}\right)\left(\frac{c}{d}\right)^{2}-\frac{1}{bd}\left(\frac{bc-ad}{b+d}\right)^{2} & =\frac{a^{2}d(b+d)+c^{2}b(b+d)-(bc-ad)^{2}}{(b+d)^{2}bd}\\
 & =\frac{bd(a^{2}+c^{2})+a^{2}d^{2}+c^{2}b^{2}-(bc-ad)^{2}}{(b+d)^{2}bd}\\
 & =\frac{bd(a^{2}+c^{2}+2ac)}{(b+d)^{2}bd}\\
 & =\left(\frac{a+c}{b+d}\right)^{2}.
\end{align*}
\hfill\end{proof}

\section{Alternative formulation of the Edelman-Kostlan Integral \label{sec:altform}}

In this section, we present an alternative formulation of the Edelman-Kostlan integral on which our proofs build upon. 
\begin{definition}
For a set $S=\{e_{1},e_{2},\dots,e_{k}\}\subseteq\N$, we define: $$g_{S}(t):=(\norm{v_{S}(t)}_{2})^{2}=\sum_{i=1}^{k}t^{2e_{i}}.$$
\end{definition}
In the following lemma, we show that we can express $z_{S}^{I}$ entirely in terms of $g_{S}(t)$
and its derivatives. Hence we define:
\begin{definition} \label{def:i(g)alt}
For a function $g:\R\rightarrow\R$, we define the function $\mathcal{I}(g):\R\rightarrow\R$,
\[
\mathcal{I}(g):=\pri{\left(\frac{\pri g(t)}{g(t)}\right)}+\frac{\pri g(t)}{tg(t)}=\dpri{\left(\log(g(t))\right)}+\frac{\pri{\left(\log(g(t))\right)}}{t}.
\]
\end{definition}

We are now ready give our alternative formulation. 

\begin{lemma}\label{z_sequaltorootigs}
For all sets $S\subseteq\N$ , we have the following equality for
$z_{S}^{I}$ .
\[
z_{S}^{I}=\frac{1}{2\pi}\intop_{I}\sqrt{\mathcal{I}(g_{S}(t))}\mathrm{d}t.
\]
\end{lemma}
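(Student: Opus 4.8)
The plan is to start from the Edelman--Kostlan integral of Theorem \ref{theorem:realrootsboundkostlan} and to rewrite its integrand purely in terms of $g_S = \norm{v_S}_2^2$ and its first two derivatives. The first step is to record the three scalar quantities appearing in \eqref{zsreqinte}: since $v_S(t) = (t^{e_1},\dots,t^{e_k})$ we have $\norm{v_S(t)}_2^2 = \sum_i t^{2e_i} = g_S(t)$, $v_S(t)\cdot v_S'(t) = \sum_i e_i t^{2e_i-1} = \tfrac12 g_S'(t)$, and $\norm{v_S'(t)}_2^2 = \sum_i e_i^2 t^{2e_i-2}$. Squaring the integrand of \eqref{zsreqinte}, the lemma is then equivalent to the pointwise identity
\[
\frac{\norm{v_S(t)}_2^2\,\norm{v_S'(t)}_2^2 - (v_S(t)\cdot v_S'(t))^2}{\norm{v_S(t)}_2^4} = \frac14\,\mathcal{I}(g_S(t)),
\]
since taking square roots turns the factor $\tfrac1\pi$ in \eqref{zsreqinte} into the $\tfrac1{2\pi}$ claimed in the lemma.

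Next I would expand the right-hand side via the first expression in Definition \ref{def:i(g)alt}, namely $\mathcal{I}(g) = \dfrac{g''g - (g')^2}{g^2} + \dfrac{g'}{tg}$. Multiplying the target identity through by $g_S^2 = \norm{v_S}_2^4$ and substituting $v_S\cdot v_S' = \tfrac12 g_S'$, the $(g_S')^2$ terms cancel on both sides and, after dividing by $g_S>0$, the whole statement collapses to the single equality
\[
4\,\norm{v_S'(t)}_2^2 = g_S''(t) + \frac{g_S'(t)}{t}.
\]
This is the heart of the matter, and it is checked term by term: differentiating $g_S(t) = \sum_i t^{2e_i}$ twice gives $g_S''(t) = \sum_i (4e_i^2 - 2e_i)t^{2e_i-2}$, while $g_S'(t)/t = \sum_i 2e_i\, t^{2e_i-2}$, so their sum is $\sum_i 4e_i^2\, t^{2e_i-2} = 4\norm{v_S'(t)}_2^2$. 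The cancellation of $-2e_i$ against $+2e_i$ is exactly what makes this work, and it is the only place where the specific monomial structure of $v_S$ enters.

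Finally I would assemble the pieces: with the pointwise identity established, the integrand of \eqref{zsreqinte} equals $\tfrac12\sqrt{\mathcal{I}(g_S(t))}$, and plugging this back into Theorem \ref{theorem:realrootsboundkostlan} gives $z_S^I = \tfrac1{2\pi}\int_I \sqrt{\mathcal{I}(g_S(t))}\,\mathrm{d}t$. The equivalence of the two forms of $\mathcal{I}(g)$ in Definition \ref{def:i(g)alt} needs no separate argument, being just $(g'/g)' = (\log g)''$ and $g'/(tg) = (\log g)'/t$. There is no genuine obstacle here beyond careful bookkeeping; the only point worth a remark is that $g_S$ is strictly positive on the relevant domain $I=(0,1)$ (indeed $g_S(t)\ge 1$ since $0\in S$), so that all the divisions and the logarithm are legitimate and the expression under the square root in \eqref{zsreqinte} is non-negative, as it must be.
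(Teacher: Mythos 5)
Your proof is correct and follows essentially the same route as the paper's: both hinge on computing $v_S\cdot v_S' = \tfrac12 g_S'$ and the key algebraic identity $4\norm{v_S'(t)}_2^2 = g_S''(t) + g_S'(t)/t$ (the paper phrases the latter as $4e_i^2 = 2e_i(2e_i-1) + 2e_i$), then substituting into the Edelman--Kostlan integral. The only difference is presentational: you isolate the reduced pointwise identity explicitly before verifying it, whereas the paper substitutes directly into the integrand, but the computation is the same.
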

\begin{proof}
We can rewrite equation \eqref{zsreqinte} as:
\[
z_{S}^{I}=\frac{1}{\pi}\intop_{I}\frac{\sqrt{(g_{S}(t)\cdot(\norm{\pri{v_{S}}(t)}_{2})^{2}-(v_{S}(t)\cdot\pri{v_{S}}(t))^{2}}}{g_{S}(t)}\mathrm{d}t.
\]

Now verify the following equality for $v_{S}(t)\cdot\pri{(v_{S}}(t))$.
\[
v_{S}(t)\cdot\pri{v_{S}}(t)=\sum_{i=1}^{k}e_{i}t^{2e_{i}-1}=\frac{\pri{g_{S}}(t)}{2}
\]

We also have the following equality for $(\norm{\pri{v_{S}}(t)}_{2})^{2}$.
\begin{align*}
(\norm{\pri{v_{S}}(t)}_{2})^{2} & =\sum_{i=1}^{k}e_{i}^{2}t^{2e_{i}-2}=\frac{1}{4}\cdot\left(\sum_{i=1}^{k}4e_{i}^{2}t^{2e_{i}-2}\right)\\
 & =\frac{1}{4}\cdot\left(\sum_{i=1}^{k}((2e_{i}(2e_{i}-1))+2e_{i})\cdot t^{2e_{i}-2}\right)\\
 & =\frac{1}{4}\cdot\left(\sum_{i=1}^{k}(2e_{i}(2e_{i}-1)\cdot t^{2e_{i}-2}\right)+\frac{1}{4}\cdot\left(\sum_{i=1}^{k}2e_{i}\cdot t^{2e_{i}-2}\right)\\
 & =\dpri{\frac{1}{4}g_{S}}(t)+\frac{1}{4t}\pri{g_{S}}(t).
\end{align*}

Therefore we can rewrite $z_{S}^{I}$ as:

\begin{align*}
z_{S}^{I} & =\frac{1}{\pi}\intop_{I}\sqrt{\frac{1}{4}\left(\frac{g_{S}(t)\cdot(\dpri{g_{S}}(t)+\frac{1}{t}\pri{g_{S}}(t))-(\pri{g_{S}}(t))^{2}}{(g_{S}(t))^{2}}\right)}\mathrm{d}t\\
 & =\frac{1}{2\pi}\intop_{I}\sqrt{\frac{\dpri{g_{S}}(t)}{g_{S}(t)}-\left(\frac{\pri{g_{S}}(t)}{g_{S}(t)}\right)^{2}+\frac{\pri{g_{S}}(t)}{tg_{S}(t)}}\mathrm{d}t\\
 & =\frac{1}{2\pi}\intop_{I}\sqrt{\pri{\left(\frac{\pri{g_{S}}(t)}{g_{S}(t)}\right)}+\frac{\pri{g_{S}}(t)}{tg_{S}(t)}}\mathrm{d}t\\
  & =\frac{1}{2\pi}\intop_{I}\sqrt{\dpri{\left(\log(g_{S}(t))\right)}+\frac{\pri{\left(\log(g_{S}(t))\right)}}{t}}\mathrm{d}t.
\end{align*}

\hfill\end{proof}

As can be seen from the above, whenever the Edelman-Kostlan integral is well defined, the conditions on $g$ which make $\mathcal{I}(g)$ well defined and non-negativity conditions are also satisfied. This is true for all cases we consider, i.e. for every $S$, $g_{S}(t)$ satisfies all the needed conditions.

We now give a useful characterization of $\mathcal{I}(g_S(t))$ that will be used to show that in a very small interval $I$ near 1, $z_S^I$ is very small (see Lemma \ref{lem:intI} and Remark \ref{rem:arbitsmall}).  
\begin{proposition}\label{prop:edelmanreformulation}
For $S=\{e_{1},e_{2},\dots,e_{k}\}\subseteq\N$, $\mathcal{I}(g_{S}(t))$ satisfies the following equality:
$$
\mathcal{I}(g_{S}(t)) =\frac{4}{(g_{S}(t))^{2}}\cdot\left(\sum_{c}\sum_{\substack{i<j\\
e_{i}+e_{j}-1=c
}
}((e_{i}-e_{j})t^{c})^{2}\right).
$$
\end{proposition}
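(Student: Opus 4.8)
The plan is to bypass any direct manipulation of logarithmic derivatives and instead start from the explicit expression for $\mathcal{I}(g_{S}(t))$ that was already extracted midway through the proof of Lemma \ref{z_sequaltorootigs}, namely
\[
\mathcal{I}(g_{S}(t))=\frac{g_{S}(t)\,\dpri{g_{S}}(t)-(\pri{g_{S}}(t))^{2}+\tfrac{1}{t}\,g_{S}(t)\,\pri{g_{S}}(t)}{(g_{S}(t))^{2}}.
\]
With this, the proposition becomes equivalent to the polynomial identity
\[
g_{S}(t)\,\dpri{g_{S}}(t)-(\pri{g_{S}}(t))^{2}+\tfrac{1}{t}\,g_{S}(t)\,\pri{g_{S}}(t)=4\sum_{c}\sum_{\substack{i<j\\ e_{i}+e_{j}-1=c}}\big((e_{i}-e_{j})t^{c}\big)^{2},
\]
and the whole argument amounts to verifying this. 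First I would substitute $g_{S}(t)=\sum_{i}t^{2e_{i}}$, $\pri{g_{S}}(t)=\sum_{i}2e_{i}t^{2e_{i}-1}$, $\dpri{g_{S}}(t)=\sum_{i}2e_{i}(2e_{i}-1)t^{2e_{i}-2}$, and expand each of the three products on the left as a double sum over ordered pairs $(i,j)$ with $1\le i,j\le k$; every resulting monomial has the form $t^{2e_{i}+2e_{j}-2}$.

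Next I would collect, for a fixed ordered pair $(i,j)$, the total coefficient of $t^{2e_{i}+2e_{j}-2}$: from $g_{S}\dpri{g_{S}}$ it is $2e_{j}(2e_{j}-1)$, from $-(\pri{g_{S}})^{2}$ it is $-4e_{i}e_{j}$, and from $\tfrac{1}{t}g_{S}\pri{g_{S}}$ it is $2e_{j}$, so their sum is $4e_{j}^{2}-4e_{i}e_{j}=4e_{j}(e_{j}-e_{i})$. I would then symmetrize the double sum: pairing the ordered pair $(i,j)$ with $(j,i)$ contributes $4\big(e_{j}(e_{j}-e_{i})+e_{i}(e_{i}-e_{j})\big)=4(e_{i}-e_{j})^{2}$ in front of $t^{2(e_{i}+e_{j}-1)}$, while the diagonal terms $i=j$ contribute $0$. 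Hence the left-hand side equals $4\sum_{i<j}(e_{i}-e_{j})^{2}t^{2(e_{i}+e_{j}-1)}$; collecting the pairs $i<j$ by the value $c=e_{i}+e_{j}-1$ and writing $(e_{i}-e_{j})^{2}t^{2c}=\big((e_{i}-e_{j})t^{c}\big)^{2}$ gives exactly the claimed numerator, and dividing by $(g_{S}(t))^{2}$ finishes the proof.

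I do not expect a genuine obstacle — this is a bookkeeping identity — but the one delicate point is the symmetrization: the coefficient $4e_{j}(e_{j}-e_{i})$ of an \emph{ordered} pair is not itself a square, and only after adding the mirror term from $(j,i)$ does it become the perfect square $4(e_{i}-e_{j})^{2}$, so one must pass carefully from ordered to unordered pairs and note that the diagonal drops out. As a sanity check, and as an alternative more conceptual route, one can instead invoke Lagrange's identity: with $(v_{S}(t))_{i}=t^{e_{i}}$ and $(\pri{v_{S}}(t))_{i}=e_{i}t^{e_{i}-1}$ one gets $(\norm{v_{S}(t)}_{2})^{2}(\norm{\pri{v_{S}}(t)}_{2})^{2}-(v_{S}(t)\cdot\pri{v_{S}}(t))^{2}=\sum_{i<j}\big(t^{e_{i}}e_{j}t^{e_{j}-1}-t^{e_{j}}e_{i}t^{e_{i}-1}\big)^{2}=\sum_{i<j}\big((e_{j}-e_{i})t^{e_{i}+e_{j}-1}\big)^{2}$; since the quantity under the square root in \eqref{zsreqinte} equals, as shown in the proof of Lemma \ref{z_sequaltorootigs}, precisely $\tfrac14$ of the left-hand side of our polynomial identity, this again yields the stated formula for $\mathcal{I}(g_{S}(t))$.
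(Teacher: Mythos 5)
Your proposal is correct and the underlying computation --- expanding as a double sum over ordered pairs, observing that the diagonal terms vanish, and pairing $(i,j)$ with $(j,i)$ so that $4e_j(e_j-e_i)+4e_i(e_i-e_j)=4(e_i-e_j)^2$ --- is exactly the bookkeeping the paper carries out, the only cosmetic difference being that you start from the $g_S$-derivative form of $\mathcal{I}(g_S)$ while the paper expands $g_S\cdot(\norm{\pri{v_S}}_2)^2-(v_S\cdot\pri{v_S})^2$, the two being the same quantity up to the factor $4/(g_S)^2$ established inside the proof of Lemma \ref{z_sequaltorootigs}. Your closing observation that this is simply Lagrange's identity applied to the vectors $v_S(t)$ and $\pri{v_S}(t)$ names the phenomenon cleanly; the paper in effect re-derives that identity inline without invoking it by name.
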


\begin{proof}
We have:
\begin{align*}
(g_{S}(t)\cdot(\norm{\pri{v_{S}}(t)}_{2})^{2}-(v_{S}(t)\cdot\pri{v_{S}}(t))^{2} & =\left(\sum_{i=1}^{k}t^{2e_{i}}\right)\cdot\left(\sum_{j=1}^{k}e_{j}^{2}t^{2e_{j}-2}\right)-\left(\sum_{i=1}^{k}e_{i}t^{2e_{i}-1}\right)^{2}\\
 & =\sum_{c}\sum_{\substack{i,j\\
e_{i}+e_{j}-1=c
}
}e_{j}^{2}t^{2c}-\sum_{c}\sum_{\substack{i,j\\
e_{i}+e_{j}-1=c
}
}e_{i}\cdot e_{j}t^{2c}
\end{align*}
\begin{align*}
 & = \left(\sum_{c}\sum_{\substack{i\\
e_{i}+e_{i}-1=c
}
}e_{i}^{2}t^{2c}+\sum_{c}\sum_{\substack{i\neq j\\
e_{i}+e_{j}-1=c
}
}e_{j}^{2}t^{2c}\right)-\left(\sum_{c}\sum_{\substack{i\\
e_{i}+e_{i}-1=c
}
}e_{i}^{2}t^{2c}+\sum_{c}\sum_{\substack{i\neq j\\
e_{i}+e_{j}-1=c
}
}e_{i}e_{j}t^{2c}\right)\\
 & =\sum_{c}\sum_{\substack{i<j\\
e_{i}+e_{j}-1=c
}
}(e_{i}^{2}+e_{j}^{2}-2e_{i}e_{j})t^{2c}=\sum_{c}\sum_{\substack{i<j\\
e_{i}+e_{j}-1=c
}
}((e_{i}-e_{j})t^{c})^{2}.
\end{align*}
\hfill\end{proof}
\begin{remark}
Using the above proposition and Lemma \ref{z_sequaltorootigs} we have: 
\begin{equation}
z_S^I=\frac{1}{\pi}\intop_{I}\frac{\sqrt{\sum_{c}\sum_{\substack{i<j\\
e_{i}+e_{j}-1=c
}
}((e_{i}-e_{j})t^{c})^{2}.
}}{g_S(t)}\mathrm{d}t.
\end{equation}
\end{remark}

The formulation in Definition \ref{def:i(g)alt} allows us to prove the following proposition:

\begin{proposition}
\label{lem:Ig1g2}For two non-negative functions $g_{1},g_{2}:\R\rightarrow\R$,
we have that \textup{$\sqrt{\mathcal{I}(g_{1}\cdot g_{2})}\leq\sqrt{\mathcal{I}(g_{1})}+\sqrt{\mathcal{I}(g_{2})}$.}
\end{proposition}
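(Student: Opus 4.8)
The plan is to observe that the operator $\mathcal{I}$ is \emph{additive} with respect to multiplication of its argument, which collapses the claimed inequality into the elementary subadditivity of the square root. Concretely, I would use the second expression for $\mathcal{I}$ from Definition \ref{def:i(g)alt}, namely $\mathcal{I}(g)=\dpri{(\log g(t))}+\pri{(\log g(t))}/t$. Since on any interval where $g_1,g_2$ are positive we have $\log(g_1 g_2)=\log g_1+\log g_2$, and since differentiation is linear, both $\dpri{(\log(g_1 g_2))}$ and $\pri{(\log(g_1 g_2))}$ split into the corresponding sums. This gives the identity
\[
\mathcal{I}(g_1\cdot g_2)=\mathcal{I}(g_1)+\mathcal{I}(g_2).
\]

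Next I would invoke the inequality $\sqrt{a+b}\le\sqrt a+\sqrt b$, valid for all $a,b\ge 0$ (squaring both sides reduces it to $2\sqrt{ab}\ge 0$). Applying it with $a=\mathcal{I}(g_1)$ and $b=\mathcal{I}(g_2)$ yields
\[
\sqrt{\mathcal{I}(g_1\cdot g_2)}=\sqrt{\mathcal{I}(g_1)+\mathcal{I}(g_2)}\le\sqrt{\mathcal{I}(g_1)}+\sqrt{\mathcal{I}(g_2)},
\]
which is exactly the assertion.

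The only point needing a word of care is the regularity and sign hypotheses implicit in the statement: the identity $\mathcal{I}(g_1 g_2)=\mathcal{I}(g_1)+\mathcal{I}(g_2)$ requires $g_1,g_2$ twice differentiable and nonvanishing on the interval in question, and the last step requires $\mathcal{I}(g_1),\mathcal{I}(g_2)\ge 0$ there so that the square roots are real. As remarked after Lemma \ref{z_sequaltorootigs}, these conditions are met for every $g_S$ we actually use (indeed Proposition \ref{prop:edelmanreformulation} exhibits $\mathcal{I}(g_S(t))$ as a sum of squares divided by $(g_S(t))^2\ge 0$, and $g_S(t)=\sum_i t^{2e_i}>0$ on $(0,1)$ with $0\in S$). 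So I do not expect a genuine obstacle here — the whole content of the proposition is the one-line additivity observation, and everything else is bookkeeping.
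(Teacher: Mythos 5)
Your proof is correct and follows exactly the same route as the paper: use the $\log$-formulation of $\mathcal{I}$ to deduce the additivity identity $\mathcal{I}(g_1 g_2)=\mathcal{I}(g_1)+\mathcal{I}(g_2)$, then apply the subadditivity of the square root. Your closing remarks on regularity and nonnegativity mirror the paper's own caveat following Lemma \ref{z_sequaltorootigs}.
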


\begin{proof}
Consider:

\begin{align*}
\mathcal{I}(g_{1}\cdot g_{2}) & =\dpri{\left(\log(g_{1}(t)\cdot g_{2}(t))\right)}+\frac{\pri{\left(\log(g_{1}(t)\cdot g_{2}(t))\right)}}{t}\\
 & =\dpri{\left(\log(g_{1}(t))\right)}+\frac{\pri{\left(\log(g_{1}(t))\right)}}{t}+\dpri{\left(\log(g_{2}(t))\right)}+\frac{\pri{\left(\log(g_{2}(t))\right)}}{t}\tag{By linearity of differentiation and the fact that \ensuremath{\log(g_{1}\cdot g_{2})=\log(g_{1})+\log(g_{2})}}\\
 & =\mathcal{I}(g_{1})+\mathcal{I}(g_{2}).
\end{align*}

Now the claim follows by using the fact that $\sqrt{x+y}\le\sqrt{x}+\sqrt{y}$
for non-negative $x,y$.
\hfill\end{proof}

This allows us to give a bound on the integral when $S=S_{1} * S_{2}$, where $*$ corresponds to the operation of either union or addition of sets. This bound depends on the integrals associated to the corresponding sets $S_{1}$ and $S_{2}$.

\subsection{Addition of sets\label{subsec:setadditionsroots}}

\begin{definition}
\label{def:addsets}For sets $A,B\subseteq\N$, we define the sum
of $A,B$ as: $A+B:=\{a+b:a\in A,b\in B\}$. We say two sets $A,B\subseteq\N$ are \emph{collision-free}
if $\card{A+B}=\card A\cdot\card B=\card{A\times B}$, i.e. when all
the ``$a+b:a\in A,b\in B$'' are distinct.
\end{definition}

Now we show how to apply this definition in the context of the above formulation of $z_{S}^{I}$
and $\mathcal{I}(g)$.
\begin{lemma}
\label{lem:sumofsets}If $S_{1},S_{2}\subseteq\N$ are two collision-free
sets (as defined in Definition \ref{def:addsets}), then $z_{S_{1}+S_{2}}^{I}\leq z_{S_{1}}^{I}+z_{S_{2}}^{I}$.
\hfill\end{lemma}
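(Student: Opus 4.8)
The plan is to reduce Lemma~\ref{lem:sumofsets} to Proposition~\ref{lem:Ig1g2} by identifying $g_{S_1+S_2}$ with the product $g_{S_1}\cdot g_{S_2}$ whenever $S_1,S_2$ are collision-free. First I would write out $g_{S_1}(t)\cdot g_{S_2}(t)=\left(\sum_{a\in S_1}t^{2a}\right)\left(\sum_{b\in S_2}t^{2b}\right)=\sum_{a\in S_1,b\in S_2}t^{2(a+b)}$. The collision-free hypothesis guarantees that the exponents $2(a+b)$ are all distinct as $(a,b)$ ranges over $S_1\times S_2$, so this sum is exactly $\sum_{e\in S_1+S_2}t^{2e}=g_{S_1+S_2}(t)$, with no coefficients larger than $1$ appearing. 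Hence $g_{S_1+S_2}=g_{S_1}\cdot g_{S_2}$ as functions on $\R$.

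Next I would invoke Lemma~\ref{z_sequaltorootigs} to write $z_{S_1+S_2}^I=\frac{1}{2\pi}\intop_I\sqrt{\mathcal{I}(g_{S_1+S_2}(t))}\,\mathrm{d}t=\frac{1}{2\pi}\intop_I\sqrt{\mathcal{I}(g_{S_1}(t)\cdot g_{S_2}(t))}\,\mathrm{d}t$. Since $g_{S_1}$ and $g_{S_2}$ are non-negative (they are sums of even powers of $t$), Proposition~\ref{lem:Ig1g2} applies pointwise and gives $\sqrt{\mathcal{I}(g_{S_1}(t)\cdot g_{S_2}(t))}\le\sqrt{\mathcal{I}(g_{S_1}(t))}+\sqrt{\mathcal{I}(g_{S_2}(t))}$ for every $t\in I$. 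Integrating this inequality over $I$ and splitting the integral by linearity yields
\[
z_{S_1+S_2}^I\le\frac{1}{2\pi}\intop_I\sqrt{\mathcal{I}(g_{S_1}(t))}\,\mathrm{d}t+\frac{1}{2\pi}\intop_I\sqrt{\mathcal{I}(g_{S_2}(t))}\,\mathrm{d}t=z_{S_1}^I+z_{S_2}^I,
\]
again using Lemma~\ref{z_sequaltorootigs} for each term.

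There is essentially no deep obstacle here; the only point requiring care is the bookkeeping in the first step — making sure the collision-free condition is used precisely so that $g_{S_1}\cdot g_{S_2}$ has $0/1$ coefficients and therefore genuinely equals $g_{S_1+S_2}$ rather than some polynomial with repeated exponents collapsed. If collisions were allowed, the product would have coefficients exceeding $1$ and would no longer be of the form $\norm{v_{S'}}_2^2$ for $S'=S_1+S_2$, so the reduction would break. I would also note in passing that all the well-definedness and non-negativity conditions needed for $\mathcal{I}$ and for the Edelman--Kostlan integral are automatically met here, as remarked after Lemma~\ref{z_sequaltorootigs}, since the relevant functions are all of the form $g_{S}$ for honest supports $S\subseteq\N$.
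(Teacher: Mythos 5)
Your proposal is correct and is essentially identical to the paper's proof: both establish $g_{S_1+S_2}=g_{S_1}\cdot g_{S_2}$ from the collision-free hypothesis, then apply Lemma~\ref{z_sequaltorootigs} and Proposition~\ref{lem:Ig1g2} and integrate. You spell out the exponent bookkeeping a bit more explicitly than the paper does, but the argument is the same.
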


\begin{proof}
It is easy to see from the definition of $g_{S}$, when \ensuremath{S_{1},S_{2}} are collision-free, we have:

\[
g_{S_{1}+S_{2}}(t)=g_{S_{1}}(t)\cdot g_{S_{2}}(t).
\]

Therefore we obtain:

\begin{align*}
z_{S_{1}+S_{2}}^{I} & =\frac{1}{2\pi}\intop_{I}\sqrt{\mathcal{I}(g_{S_{1}+S_{2}}(t))}\mathrm{d}t=\frac{1}{2\pi}\intop_{I}\sqrt{\mathcal{I}(g_{S_{1}}(t)\cdot g_{S_{2}}(t))}\mathrm{d}t\\
 & \leq\frac{1}{2\pi}\intop_{I}\sqrt{\mathcal{I}(g_{S_{1}}(t))}\mathrm{d}t+\frac{1}{2\pi}\intop_{I}\sqrt{\mathcal{I}(g_{S_{2}}(t))}\mathrm{d}t\tag{Follows from \ref{lem:Ig1g2} }\\
 & =z_{S_{1}}^{I}+z_{S_{2}}^{I}.
\end{align*}
\hfill\end{proof}

\begin{corollary}
If $S=\{0,1,\dots,k\}\cup\{n-k,n-k+1,\dots,n\}$ with $n>2k$, then
$z_{S_{1}+S_{2}}\leq O(\log k)$.
\end{corollary}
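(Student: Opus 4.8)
The plan is to realise $S$ as a collision-free sum $S_{1}+S_{2}$ of two much simpler sets and then apply Lemma \ref{lem:sumofsets} together with the classical dense estimate of Theorem \ref{theorem:densekostlanbound}. Concretely, I would take $S_{1}=\{0,n-k\}$ and $S_{2}=\{0,1,\dots,k\}$. Unwinding Definition \ref{def:addsets}, $S_{1}+S_{2}=\bigl(0+S_{2}\bigr)\cup\bigl((n-k)+S_{2}\bigr)=\{0,1,\dots,k\}\cup\{n-k,n-k+1,\dots,n\}=S$.

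Next I would check that $S_{1}$ and $S_{2}$ are collision-free. The two translates $0+S_{2}=\{0,\dots,k\}$ and $(n-k)+S_{2}=\{n-k,\dots,n\}$ are blocks of $k+1$ consecutive integers, and they are disjoint exactly when $k<n-k$, i.e. when $n>2k$ --- precisely the hypothesis. Hence $\card{S_{1}+S_{2}}=2(k+1)=\card{S_{1}}\cdot\card{S_{2}}$, so the pair is collision-free and Lemma \ref{lem:sumofsets} yields $z_{S}\leq z_{S_{1}}+z_{S_{2}}$.

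It then remains to bound the two summands. Since the Edelman--Kostlan integrand $\sqrt{\mathcal{I}(g_{S_{2}}(t))}$ is non-negative and $(0,1)\subseteq\R$, we have $z_{S_{2}}=z_{S_{2}}^{(0,1)}\leq z_{S_{2}}^{\R}$, and Theorem \ref{theorem:densekostlanbound} applied with $k$ in place of $n$ gives $z_{S_{2}}^{\R}=\frac{2}{\pi}\log k+O(1)=O(\log k)$. For $S_{1}=\{0,n-k\}$, the polynomial $f_{S_{1}}=a_{0}+a_{1}x^{n-k}$ has at most one zero in $(0,\infty)$ (Descartes' rule of signs, or simply monotonicity of $x\mapsto x^{n-k}$), so $z_{S_{1}}\leq 1$; alternatively Theorem \ref{theorem:main} gives $z_{S_{1}}\leq\frac{2}{\pi}$. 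Combining, $z_{S}\leq 1+O(\log k)=O(\log k)$, as claimed.

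I do not expect a genuine difficulty here; the only two points needing a little care are choosing the decomposition $S=S_{1}+S_{2}$ for which collision-freeness is \emph{equivalent} to the stated hypothesis $n>2k$, and noting that Theorem \ref{theorem:densekostlanbound} is phrased for $z^{\R}$ whereas $z_{S}$ refers to the interval $(0,1)$, a discrepancy that is harmless because the integrand is non-negative and $(0,1)\subseteq\R$.
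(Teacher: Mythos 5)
Your proof is correct and follows essentially the same route as the paper: write $S=\{0,1,\dots,k\}+\{0,n-k\}$, check collision-freeness (equivalent to $n>2k$), apply Lemma \ref{lem:sumofsets}, and invoke Theorem \ref{theorem:densekostlanbound} for the dense block. The only cosmetic difference is that you bound $z_{\{0,n-k\}}$ via Descartes' rule (or Theorem \ref{theorem:main}) whereas the paper cites Theorem \ref{theorem:sparseburgisserbound} for that constant; either works.
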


\begin{proof}
Note that $S=\{0,1,\dots,k\}+\{0,n-k\}$. Now the result follows by
using Theorems \ref{theorem:densekostlanbound} and  \ref{theorem:sparseburgisserbound},
and Lemma \ref{lem:sumofsets}. 
\hfill\end{proof}

\subsection{Union of sets}\label{sec:setunionroots}

In subsection \ref{subsec:setadditionsroots}, we demonstrated an upper bound
$z_{S_{1}+S_{2}}$ in terms of $z_{S_{1}}$ and $z_{S_{2}}$. In this
section, we want to find upper bounds for $z_{S_{1}\sqcup S_{2}}$,
here $S_{1}\sqcup S_{2}$ denotes the disjoint union of $S_{1}$ and
$S_{2}$. First we state the following proposition which is easy to verify.
\begin{proposition}
\label{lem:disjoints1s2}If $S_{1},S_{2}\subseteq\N$ are two disjoint
sets then $g_{S_{1}\sqcup S_{2}}(t)=g_{S_{1}}(t)+g_{S_{2}}(t)$.
\end{proposition}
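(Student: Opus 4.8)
The plan is to simply unwind the definition of $g_S$: the claim is nothing more than additivity of a finite sum whose index set splits as a disjoint union, and the whole content of the hypothesis is that no exponent gets counted twice.

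First I would recall that, for a set $S=\{e_1,\dots,e_k\}\subseteq\N$, we have $g_S(t)=(\norm{v_S(t)}_2)^2=\sum_{e\in S}t^{2e}$, where it is convenient to view the sum as indexed by the \emph{elements} of $S$ rather than by $1,\dots,k$. Next, since $S_1\cap S_2=\emptyset$, every element of the disjoint union $S_1\sqcup S_2$ lies in exactly one of $S_1$ and $S_2$, so the index set of the sum defining $g_{S_1\sqcup S_2}$ partitions into the index sets of the sums defining $g_{S_1}$ and $g_{S_2}$. Hence
$$g_{S_1\sqcup S_2}(t)=\sum_{e\in S_1\sqcup S_2}t^{2e}=\sum_{e\in S_1}t^{2e}+\sum_{e\in S_2}t^{2e}=g_{S_1}(t)+g_{S_2}(t),$$
which is exactly the asserted identity.

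The only point worth flagging is where disjointness is used: if $S_1$ and $S_2$ shared some exponent $e$, then the monomial $t^{2e}$ would be counted once on the left-hand side but twice on the right-hand side, so the hypothesis $S_1\cap S_2=\emptyset$ is precisely what makes the splitting of the sum legitimate. There is no genuine obstacle here — this is a one-line bookkeeping identity — and I expect its role in what follows to be as the ``additive'' companion of the multiplicative identity $g_{S_1+S_2}=g_{S_1}\cdot g_{S_2}$ from Lemma~\ref{lem:sumofsets}, feeding (together with Proposition~\ref{lem:abcdidentity}) into an estimate of $\mathcal{I}(g_{S_1}+g_{S_2})$ and hence of $z_{S_1\sqcup S_2}^I$ in terms of $z_{S_1}^I$ and $z_{S_2}^I$.
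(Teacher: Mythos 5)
Your proof is correct and is exactly what the paper has in mind; the paper simply states this proposition without proof (``easy to verify''), and your one-line splitting of $\sum_{e\in S_1\sqcup S_2}t^{2e}$ over the disjoint union is the intended justification.
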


We need the following definition to give our result for expressing $z_{S_1 \sqcup S_2}$ in terms of $z_{S_1}$ and $z_{S_2}$.

\begin{definition}
\label{def:commongs1gs2defns}If $S_{1},S_{2}\subseteq\N$ are two
disjoint sets with $\pri{\left(\frac{g_{S_{1}}}{g_{S_{2}}}\right)}$
being non-negative at zero.\footnote{Note that at least one of $\pri{\left(\frac{g_{S_{1}}}{g_{S_{2}}}\right)}$ and $\pri{\left(\frac{g_{S_{2}}}{g_{S_{1}}}\right)}$ has to be non-negative at zero. Thus, we can always rename accordingly $S_{1}$ and $S_{2}$ to ensure this is the case.} Let $c_1, \ldots, c_m$ (with $c_{i}\leq c_{i+1}$)
be the critical points of odd multiplicity of $\frac{g_{S_{1}}}{g_{S_{2}}}$ in $(0,1)$.
Define $c_{0}:=0\text{ and }c_{m+1}:=1$. We define the following
quantities, here $0\leq i\leq m$ and $c\in(0,1)$.

\begin{align*}
\gamma_{S_{1},S_{2}}(c) & =\sqrt{\frac{g_{s_{1}}(c)}{g_{s_{2}}(c)}}\\
T_{S_{1},S_{2}}^{i} & :=\begin{cases}
\arctan(\gamma_{S_{1},S_{2}}(c_{i+1}))-\arctan(\gamma_{S_{1},S_{2}}(c_{i})) & \text{If }i\text{ is even}\\
\arctan\left(\frac{1}{\gamma_{S_{1},S_{2}}(c_{i+1})}\right)-\arctan\left(\frac{1}{\gamma_{S_{1},S_{2}}(c_{i})}\right) & \text{If }i\text{ is odd}
\end{cases}\\
R_{S_{1},S_{2}} & :=\sum_{i=0}^{m}T_{S_{1},S_{2}}^{i}.
\end{align*}
\end{definition}

\begin{lemma}
\label{lem:unions1s2correctbound}Let $S_{1},S_{2}\subseteq\N$ be
two disjoint sets. WLOG assume that $\pri{\left(\frac{g_{S_{1}}}{g_{S_{2}}}\right)}$
is non-negative at zero. Then we have: 
\[
z_{S_{1}\sqcup S_{2}}\leq z_{S_{1}}+z_{S_{2}}+\frac{1}{\pi}R_{\ensuremath{S_{1},S_{2}}}.
\]
\end{lemma}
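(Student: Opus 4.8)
The plan is to start from the alternative formulation $z_{S_1 \sqcup S_2} = \frac{1}{2\pi}\int_I \sqrt{\mathcal{I}(g_{S_1\sqcup S_2}(t))}\,\mathrm{d}t$ from Lemma \ref{z_sequaltorootigs}, and use Proposition \ref{lem:disjoints1s2} to write $g_{S_1\sqcup S_2} = g_{S_1} + g_{S_2}$. The key algebraic step is to apply Proposition \ref{lem:abcdidentity} with the substitution $a = \pri{g_{S_1}}$, $b = g_{S_1}$, $c = \pri{g_{S_2}}$, $d = g_{S_2}$ (so that $\frac{a+c}{b+d} = \frac{\pri{g_{S_1}} + \pri{g_{S_2}}}{g_{S_1}+g_{S_2}} = \frac{\pri{g_{S_1\sqcup S_2}}}{g_{S_1\sqcup S_2}}$), which decomposes $\left(\frac{\pri{g_{S_1\sqcup S_2}}}{g_{S_1\sqcup S_2}}\right)^2$ as a convex combination $\lambda \left(\frac{\pri{g_{S_1}}}{g_{S_1}}\right)^2 + (1-\lambda)\left(\frac{\pri{g_{S_2}}}{g_{S_2}}\right)^2$ minus a nonnegative correction term, where $\lambda = \frac{g_{S_1}}{g_{S_1}+g_{S_2}}$. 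Feeding this into the expression $\mathcal{I}(g) = \pri{(\pri g/g)} + \frac{\pri g}{tg}$ and using $\pri{(\pri g/g)} = \dpri g/g - (\pri g/g)^2$, I expect (after also decomposing $\dpri{g}/g$ and $\frac{\pri g}{tg}$ linearly across the sum) that $\mathcal{I}(g_{S_1\sqcup S_2})$ splits, up to the negative correction, into $\lambda \mathcal{I}(g_{S_1}) + (1-\lambda)\mathcal{I}(g_{S_2})$ plus the derivative of $\lambda$-type terms; then $\sqrt{\mathcal{I}(g_{S_1\sqcup S_2})} \le \sqrt{\lambda}\sqrt{\mathcal{I}(g_{S_1})} + \sqrt{1-\lambda}\sqrt{\mathcal{I}(g_{S_2})} + (\text{leftover})$ by concavity of the square root, and since $\sqrt\lambda, \sqrt{1-\lambda} \le 1$ this bounds the first two pieces by $\sqrt{\mathcal{I}(g_{S_1})} + \sqrt{\mathcal{I}(g_{S_2})}$, whose integrals give $2\pi z_{S_1} + 2\pi z_{S_2}$.

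The remaining term, which I expect to be the genuine content of $R_{S_1,S_2}$, comes from the cross-derivative part of $\mathcal{I}$. Writing $\rho(t) = g_{S_1}(t)/g_{S_2}(t)$, the weight $\lambda = \rho/(1+\rho)$, and the leftover after the convexity split should simplify — using the identity $\arctan(\sqrt\rho)' = \frac{\rho'}{2\sqrt\rho(1+\rho)}$ — to something integrating exactly to an arctangent of $\sqrt\rho = \gamma_{S_1,S_2}$. The reason the critical points $c_1,\dots,c_m$ of odd multiplicity enter is that $\sqrt{\mathcal{I}(\cdot)}$ involves $\sqrt{(\cdot)^2} = |\cdot|$ of a term proportional to $\rho'$; on each subinterval $(c_i, c_{i+1})$ the sign of $\rho'$ is constant, so the absolute value can be dropped at the cost of a sign, and integrating $|\rho'|$-type expressions piecewise yields $\sum_i |\arctan(\gamma(c_{i+1})) - \arctan(\gamma(c_i))|$ — but on odd-indexed intervals (where $\rho$ is decreasing) one passes to $1/\gamma$ via $\arctan(x) + \arctan(1/x) = \pi/2$ to keep the increments positive, which is precisely the case split in the definition of $T^i_{S_1,S_2}$. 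Summing these increments gives $R_{S_1,S_2}$, and the prefactor bookkeeping turns the $\frac{1}{2\pi}$ into $\frac{1}{\pi}$.

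The main obstacle I anticipate is controlling the interaction between the two square roots: after applying Proposition \ref{lem:abcdidentity} one has $\mathcal{I}(g_{S_1\sqcup S_2})$ as a sum of a "split part" and a "correction part", and I need $\sqrt{\text{split} + \text{correction}} \le \sqrt{\text{split}} + \sqrt{|\text{correction}|}$ together with the separate bounds $\sqrt{\text{split}} \le \sqrt{\mathcal{I}(g_{S_1})} + \sqrt{\mathcal{I}(g_{S_2})}$ and $\int \sqrt{|\text{correction}|} \le R_{S_1,S_2}$ — and making the correction term manifestly a perfect square (so that its square root is a clean $|\rho'|$-type expression that integrates to arctangents) will require care, since the $\frac{\pri g}{tg}$ term of $\mathcal{I}$ contributes cross terms that must be shown either to vanish, to have a sign, or to be absorbable. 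A secondary technical point is justifying the piecewise integration across critical points of odd multiplicity: one must check that at even-multiplicity critical points no sign change occurs (so no contribution), and that the endpoints $t \to 0^+$ and $t \to 1^-$ behave well (finiteness of $\gamma_{S_1,S_2}$ at $0$ follows from the WLOG normalization on $\pri{(g_{S_1}/g_{S_2})}$).
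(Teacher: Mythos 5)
Your proposal follows essentially the same route as the paper: apply Proposition \ref{lem:abcdidentity} with $a=\pri{g_{S_1}}$, $b=g_{S_1}$, $c=\pri{g_{S_2}}$, $d=g_{S_2}$ to obtain $\mathcal{I}(g_{S_1\sqcup S_2}) = \frac{g_{S_1}}{g_{S_1}+g_{S_2}}\mathcal{I}(g_{S_1}) + \frac{g_{S_2}}{g_{S_1}+g_{S_2}}\mathcal{I}(g_{S_2}) + \frac{1}{g_{S_1}g_{S_2}}\bigl(\frac{g_{S_1}\pri{g_{S_2}}-g_{S_2}\pri{g_{S_1}}}{g_{S_1}+g_{S_2}}\bigr)^2$, bound the first two pieces using weights at most $1$ together with $\sqrt{x+y+z}\le\sqrt x+\sqrt y+\sqrt z$, and integrate the square root of the correction piecewise via $u=\sqrt{g_{S_1}/g_{S_2}}$ on intervals where $g_{S_1}/g_{S_2}$ increases (and its reciprocal where it decreases), yielding the arctangent differences that sum to $R_{S_1,S_2}$. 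Your anticipated difficulty with cross terms from the $\frac{\pri g}{tg}$ part of $\mathcal{I}$ does not arise: that term is linear in $\pri g$ and homogeneous of degree $-1$ in $g$, so it splits as the same convex combination $\frac{g_{S_1}}{g_{S_1}+g_{S_2}}\cdot\frac{\pri{g_{S_1}}}{tg_{S_1}}+\frac{g_{S_2}}{g_{S_1}+g_{S_2}}\cdot\frac{\pri{g_{S_2}}}{tg_{S_2}}$ with no remainder, and likewise for $\dpri g/g$; the entire correction term comes from the single application of Proposition \ref{lem:abcdidentity} to $-(\pri g/g)^2$, exactly as the paper carries out.
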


\begin{proof}
By using Proposition \ref{lem:disjoints1s2}, we know that:

\begin{align*}
\mathcal{I}(g_{S_{1}\sqcup S_{2}}) & =\mathcal{I}(g_{S_{1}}+g_{S_{2}})=\frac{\dpri{g_{S_{1}}}+\dpri{g_{S_{2}}}}{g_{S_{1}}+g_{S_{2}}}-\left(\frac{\pri{g_{S_{1}}}+\pri{g_{S_{2}}}}{g_{S_{1}}+g_{S_{2}}}\right)^{2}+\frac{1}{t}\left(\frac{\pri{g_{S_{1}}}+\pri{g_{S_{2}}}}{g_{S_{1}}+g_{S_{2}}}\right)\\
 & =\frac{g_{S_{1}}}{g_{S_{1}}+g_{S_{2}}}\cdot \mathcal{I}(g_{S_{1}})+\frac{g_{S_{2}}}{g_{S_{1}}+g_{S_{2}}}\cdot \mathcal{I}(g_{S_{2}})+\frac{1}{g_{S_{1}}g_{S_{2}}}\left(\frac{g_{S_{1}}\pri{g_{S_{2}}}-g_{S_{2}}\pri{g_{S_{1}}}}{g_{S_{1}}+g_{S_{2}}}\right)^{2}\tag{Follows by applying Proposition \ref{lem:abcdidentity} on \ensuremath{\pri{g_{S_1}}=a,g_{S_1}=b,\pri{g_{S_2}}=c,g_{S_2}=d}}.
\end{align*}

Therefore we have:
\begin{align*}
z_{S_{1}\sqcup S_{2}} & =\frac{1}{2\pi}\intop_{0}^{1}\sqrt{\mathcal{I}(g_{S_{1}\sqcup S_{2}}(t))}\mathrm{d}t\\
 & =\frac{1}{2\pi}\intop_{0}^{1}\sqrt{\frac{g_{S_{1}}}{g_{S_{1}}+g_{S_{2}}}\cdot \mathcal{I}(g_{S_{1}})+\frac{g_{S_{2}}}{g_{S_{1}}+g_{S_{2}}}\cdot \mathcal{I}(g_{S_{2}})+\frac{1}{g_{S_{1}}g_{S_{2}}}\cdot\frac{(g_{S_{2}}\pri{g_{S_{1}}}-g_{S_{1}}\pri{g_{S_{2}}})^2}{(g_{S_{1}}+g_{S_{2}})^2}}\mathrm{d}t\\
 & \leq\frac{1}{2\pi}\cdot\left(\intop_{0}^{1}\sqrt{\mathcal{I}(g_{S_{1}}(t))}\mathrm{d}t+\intop_{0}^{1}\sqrt{\mathcal{I}(g_{S_{2}}(t))}\mathrm{d}t+\intop_{0}^{1}\big|\frac{1}{\sqrt{g_{S_{1}}g_{S_{2}}}}\cdot\frac{g_{S_{2}}\pri{g_{S_{1}}}-g_{S_{1}}\pri{g_{S_{2}}}}{g_{S_{1}}+g_{S_{2}}}\big|\mathrm{d}t\right)\\
 & =z_{S_{1}}+z_{S_{2}}+\frac{1}{2\pi}\intop_{0}^{1}\big|\frac{1}{\sqrt{g_{S_{1}}g_{S_{2}}}}\left(\frac{g_{S_{2}}\pri{g_{S_{1}}}-g_{S_{1}}\pri{g_{S_{2}}}}{g_{S_{1}}+g_{S_{2}}}\right)\big|\mathrm{d}t.
\end{align*}
Now we just need to upper bound the definite integral: $$J:=\intop_{0}^{1}\Big|{\frac{1}{\sqrt{g_{S_{1}}g_{S_{2}}}}\left(\frac{g_{S_{2}}\pri{g_{S_{1}}}-g_{S_{1}}\pri{g_{S_{2}}}}{g_{S_{1}}+g_{S_{2}}}\right)}\Big|\mathrm{d}t.$$
The value of $J$ in a sub-interval $(\alpha,\beta)$ of $(0,1)$ depends
upon the condition whether $g_{S_{2}}\pri{g_{S_{1}}}-g_{S_{1}}\pri{g_{S_{2}}}$
is positive or negative in $(\alpha,\beta)$. So we divide $(0,1)$
in the intervals where $g_{S_{2}}\pri{g_{S_{1}}}-g_{S_{1}}\pri{g_{S_{2}}}$
is positive or negative. Note that $g_{S_{2}}\pri{g_{S_{1}}}-g_{S_{1}}\pri{g_{S_{2}}}$
is positive if and only if $\pri{\left(\frac{g_{S_{1}}}{g_{S_{2}}}\right)}$
is positive. Therefore $g_{S_{2}}\pri{g_{S_{1}}}-g_{S_{1}}\pri{g_{S_{2}}}$
changes sign exactly on the critical points of odd multiplicity of $\frac{g_{S_{1}}}{g_{S_{2}}}$.
Suppose $(\alpha,\beta)$ is some sub-interval of $(0,1)$ where $\pri{\left(\frac{g_{S_{1}}}{g_{S_{2}}}\right)}$
is non-negative. Let us look at the integral $J$ in the interval
($\alpha,\beta).$ We have:

\[
J_{\alpha,\beta}:=\intop_{\alpha}^{\beta}\frac{1}{\sqrt{g_{S_{1}}g_{S_{2}}}}\left(\frac{g_{S_{2}}\pri{g_{S_{1}}}-g_{S_{1}}\pri{g_{S_{2}}}}{g_{S_{2}}^{2}}\right)\cdot\left(\frac{g_{S_{2}}^{2}}{g_{S_{1}}+g_{S_{2}}}\right)\mathrm{d}t
\]

Now we use the substitution $u=\sqrt{\frac{g_{s_{1}}}{g_{s_{2}}}}$
to obtain:

\begin{align*}
J_{\alpha,\beta} & :=\intop_{\alpha}^{\beta}\frac{1}{\sqrt{g_{S_{1}}g_{S_{2}}}}\left(\frac{g_{S_{2}}\pri{g_{S_{1}}}-g_{S_{1}}\pri{g_{S_{2}}}}{g_{S_{2}}^{2}}\right)\cdot\left(\frac{g_{S_{2}}^{2}}{g_{S_{1}}+g_{S_{2}}}\right)\mathrm{d}t\\
 & =\intop_{\alpha}^{\beta}\sqrt{\frac{g_{S_{2}}}{g_{S_{1}}}\cdot}\left(\frac{g_{S_{2}}\pri{g_{S_{1}}}-g_{S_{1}}\pri{g_{S_{2}}}}{g_{S_{2}}^{2}}\right)\cdot\left(\frac{g_{S_{2}}}{g_{S_{1}}+g_{S_{2}}}\right)\mathrm{d}t \\ &=2\intop_{\alpha}^{\beta}\pri{\left(\sqrt{\frac{g_{s_{1}}}{g_{s_{2}}}}\right)}\cdot\left(\frac{1}{1+\left(\sqrt{\frac{g_{s_{1}}}{g_{s_{2}}}}\right)^{2}}\right)\mathrm{d}t
  =2\intop_{\gamma}^{\eta}\left(\frac{1}{1+u^{2}}\right)\mathrm{d}u\tag{Here \ensuremath{\gamma=\sqrt{\frac{g_{s_{1}}(\alpha)}{g_{s_{2}}(\alpha)}}} and \ensuremath{\eta}=\ensuremath{\sqrt{\frac{g_{s_{1}}(\beta)}{g_{s_{2}}(\beta)}}}.}
\end{align*}

Therefore $J_{\alpha,\beta}=2(\arctan(\eta)-\arctan(\gamma))$ with
$\gamma=\sqrt{\frac{g_{s_{1}}(\alpha)}{g_{s_{2}}(\alpha)}}$ and $\eta=\sqrt{\frac{g_{s_{1}}(\beta)}{g_{s_{2}}(\beta)}}$.
For intervals where $\pri{\left(\frac{g_{S_{1}}}{g_{S_{2}}}\right)}$ is negative, we obtain the same result by using the substitution $u=\sqrt{\frac{g_{s_{2}}}{g_{s_{1}}}}$ instead, which is reflected on the definition of $T_{S_{1},S_{2}}^{i}$ above. Now the claimed inequality for $z_{S_{1}\sqcup S_{2}}$ follows by using the quantities defined in Definition \ref{def:commongs1gs2defns}.
\hfill\end{proof}
\begin{corollary}
\label{cor:mcriticalbound}Let $S_{1},S_{2}\subseteq\N$ be two disjoint
sets. WLOG assume that $\pri{\left(\frac{g_{S_{1}}}{g_{S_{2}}}\right)}$
is non-negative at zero. If $\frac{g_{S_{1}}}{g_{S_{2}}}$ has $m$
critical points in $(0,1)$ of odd multiplicity, then:
\[
z_{S_{1}\sqcup S_{2}}\leq z_{S_{1}}+z_{S_{2}}+\frac{m+1}{2}.
\]
\end{corollary}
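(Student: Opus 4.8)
The plan is to deduce the bound directly from Lemma \ref{lem:unions1s2correctbound}, which already gives $z_{S_{1}\sqcup S_{2}}\leq z_{S_{1}}+z_{S_{2}}+\frac{1}{\pi}R_{S_{1},S_{2}}$, and then to estimate $R_{S_{1},S_{2}}=\sum_{i=0}^{m}T_{S_{1},S_{2}}^{i}$ termwise. The only thing that really needs checking is that each summand $T_{S_{1},S_{2}}^{i}$ lies in the interval $[0,\frac{\pi}{2})$; once that is established the corollary is immediate.

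First I would observe that, since $g_{S_{1}}$ and $g_{S_{2}}$ are sums of even powers of $t$, they are strictly positive on $(0,1)$, so $\gamma_{S_{1},S_{2}}(c)=\sqrt{g_{S_{1}}(c)/g_{S_{2}}(c)}$ and its reciprocal are non-negative real numbers for every $c\in(0,1)$. Because $\arctan$ maps $[0,\infty)$ into $[0,\frac{\pi}{2})$, each of the two arctangents appearing in the definition of $T_{S_{1},S_{2}}^{i}$ lies in $[0,\frac{\pi}{2})$, and hence their difference lies in $(-\frac{\pi}{2},\frac{\pi}{2})$; in particular $T_{S_{1},S_{2}}^{i}<\frac{\pi}{2}$.

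To get the lower bound $T_{S_{1},S_{2}}^{i}\geq0$ I would invoke the sign analysis already carried out in the proof of Lemma \ref{lem:unions1s2correctbound}: on an interval $(c_{i},c_{i+1})$ with $i$ even, $\pri{\left(\frac{g_{S_{1}}}{g_{S_{2}}}\right)}$ is non-negative, so $\frac{g_{S_{1}}}{g_{S_{2}}}$ and therefore $\gamma_{S_{1},S_{2}}$ is non-decreasing, whence $\arctan(\gamma_{S_{1},S_{2}}(c_{i+1}))\geq\arctan(\gamma_{S_{1},S_{2}}(c_{i}))$; on an interval with $i$ odd, $\frac{g_{S_{1}}}{g_{S_{2}}}$ is non-increasing, so $\frac{g_{S_{2}}}{g_{S_{1}}}$ (equivalently $1/\gamma_{S_{1},S_{2}}$) is non-decreasing, again making the relevant difference of arctangents non-negative. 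Thus $0\leq T_{S_{1},S_{2}}^{i}<\frac{\pi}{2}$ for every $0\leq i\leq m$.

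Summing over the $m+1$ indices $i=0,1,\dots,m$ then yields $R_{S_{1},S_{2}}<(m+1)\cdot\frac{\pi}{2}$, so $\frac{1}{\pi}R_{S_{1},S_{2}}<\frac{m+1}{2}$, and substituting into Lemma \ref{lem:unions1s2correctbound} gives $z_{S_{1}\sqcup S_{2}}\leq z_{S_{1}}+z_{S_{2}}+\frac{m+1}{2}$. I do not expect any serious obstacle; the only mildly delicate point is the bookkeeping that the $m$ critical points of odd multiplicity together with the endpoints $c_{0}=0$ and $c_{m+1}=1$ cut $(0,1)$ into exactly $m+1$ subintervals, and that the parity of $i$ faithfully records the sign of $\pri{\left(\frac{g_{S_{1}}}{g_{S_{2}}}\right)}$ on each of them — but both of these are already encoded in Definition \ref{def:commongs1gs2defns}.
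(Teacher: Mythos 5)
Your proposal is correct and follows essentially the same route as the paper: apply Lemma \ref{lem:unions1s2correctbound}, then bound each $T_{S_1,S_2}^{i}$ by $\frac{\pi}{2}$ and sum over the $m+1$ subintervals. You are slightly more thorough than the paper in also verifying $T_{S_1,S_2}^{i}\geq 0$ via the monotonicity argument; this is not strictly needed for the stated upper bound (only $T_{S_1,S_2}^{i}\leq\frac{\pi}{2}$ is used), but it confirms the sign conventions in Definition \ref{def:commongs1gs2defns} and makes the bookkeeping transparent.
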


\begin{proof}
By using Lemma \ref{lem:unions1s2correctbound}, we know that:
\[
z_{S_{1}\sqcup S_{2}}\leq z_{S_{1}}+z_{S_{2}}+\frac{1}{\pi}R_{S_{1},S_{2}}.
\]

Also, $R_{S_{1},S_{2}}:=\sum_{i=0}^{m}T_{S_{1},S_{2}}^{i}.$ By
using the definition of $T_{S_{1},S_{2}}^{i}$ defined in Definition \ref{def:commongs1gs2defns},
it is clear that $T_{S_{1},S_{2}}^{i}\leq\frac{\pi}{2}$. Therefore
we have: $R_{S_{1},S_{2}}\leq(m+1)\cdot\frac{\pi}{2}$. Hence the
claimed inequality follows.
\hfill\end{proof}

\section{Proof of Theorem \ref{theorem:main}: $O(\sqrt{k})\ $bound} \label{sqrtkbound}

\begin{proposition}
For any singleton set $S$, we have $\mathcal{I}(g_{S})=0$.
\end{proposition}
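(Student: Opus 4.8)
The plan is to compute $\mathcal{I}(g_S)$ directly from Definition \ref{def:i(g)alt} for a singleton $S=\{e_1\}$. First I would observe that $g_S(t)=\norm{v_S(t)}_2^2=t^{2e_1}$, so that $\log(g_S(t))=2e_1\log t$. Then $\pri{\left(\log(g_S(t))\right)}=\frac{2e_1}{t}$ and $\dpri{\left(\log(g_S(t))\right)}=-\frac{2e_1}{t^2}$. Substituting into
\[
\mathcal{I}(g_S)=\dpri{\left(\log(g_S(t))\right)}+\frac{\pri{\left(\log(g_S(t))\right)}}{t}=-\frac{2e_1}{t^2}+\frac{1}{t}\cdot\frac{2e_1}{t}=0,
\]
which is exactly the claim. (If $e_1=0$ the statement is even more immediate, since $g_S\equiv 1$ and $\log g_S\equiv 0$.)

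Alternatively, one can invoke Proposition \ref{prop:edelmanreformulation}, which expresses $\mathcal{I}(g_S(t))$ as $\frac{4}{(g_S(t))^2}\sum_{c}\sum_{i<j,\, e_i+e_j-1=c}((e_i-e_j)t^c)^2$; for a singleton there are no pairs $i<j$ at all, so the double sum is empty and $\mathcal{I}(g_S)=0$. Either route is a one-line computation, so there is essentially no obstacle here; the only thing worth noting is that the cancellation is precisely the reason the $\frac{\pri g}{tg}$ term appears in the definition of $\mathcal{I}(g)$ (it is what makes a single monomial contribute no expected real zeros on $(0,1)$), and this vanishing is the base case for the monomial-by-monomial induction in Section \ref{sqrtkbound}.
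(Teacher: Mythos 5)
Your main computation is exactly the paper's proof: write $g_S(t)=t^{2e_1}$, take $\log$, differentiate twice, and watch the two terms in Definition \ref{def:i(g)alt} cancel. The alternative via Proposition \ref{prop:edelmanreformulation} is a nice cross-check but not something the paper does here; both routes are correct.
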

\begin{proof}
Suppose $S=\{a\}$, therefore $g_{S}(t)=t^{2a}$. Hence:
\begin{align*}
\mathcal{I}(g_{S}) & =\dpri{\left(\log(g_{S}(t))\right)}+\frac{\pri{\left(\log(g_{S}(t))\right)}}{t}=\dpri{(2a\log(t))}+\frac{\pri{\left(2a\log(t)\right)}}{t}\\
 & =\frac{-2a}{t^{2}}+\frac{2a}{t^{2}}=0.
\end{align*}
\hfill\end{proof}

\begin{lemma}
\label{lem:randomrealrootcardinalitytwo}For all sets $S$ of size
two, $z_{S}=\frac{1}{4}$.
\end{lemma}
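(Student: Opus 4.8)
The plan is to put any two-element set into normal form and then evaluate the Edelman--Kostlan integral explicitly. Since $0\notin I=(0,1)$, we may (as noted before Theorem~\ref{theorem:main}) assume $\min S=0$, so $S=\{0,d\}$ with $d\geq 1$ and $g_S(t)=1+t^{2d}$. I would then apply Proposition~\ref{prop:edelmanreformulation}: the only pair $i<j$ is $(e_1,e_2)=(0,d)$, which contributes the exponent $c=e_1+e_2-1=d-1$ and the coefficient $(e_i-e_j)^2=d^2$, so
\[
\mathcal{I}(g_S(t))=\frac{4d^2t^{2(d-1)}}{(1+t^{2d})^2},\qquad \sqrt{\mathcal{I}(g_S(t))}=\frac{2dt^{d-1}}{1+t^{2d}}.
\]
This is continuous on $[0,1]$ (it vanishes at $t=0$ when $d\geq 2$, and equals $2/(1+t^2)$ when $d=1$), so there is no integrability issue near the origin.

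By Lemma~\ref{z_sequaltorootigs} this gives $z_S=\frac{1}{2\pi}\int_0^1 \frac{2dt^{d-1}}{1+t^{2d}}\,\mathrm{d}t$, and the substitution $u=t^{d}$ (so $\mathrm{d}u=dt^{d-1}\,\mathrm{d}t$, with the endpoints $0,1$ unchanged) collapses this to
\[
z_S=\frac{1}{\pi}\intop_0^1\frac{\mathrm{d}u}{1+u^2}=\frac{1}{\pi}\arctan(1)=\frac{1}{4},
\]
independently of $d$. That is the entire argument; there is no genuine obstacle here. The only points to verify are that the reformulation of Proposition~\ref{prop:edelmanreformulation} applies (it does, since $g_S(t)>0$ on $(0,1]$ and the needed non-negativity holds, as remarked after Lemma~\ref{z_sequaltorootigs}) and that the substitution is legitimate, which is immediate.

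As a sanity check one can instead deduce the equality from Lemma~\ref{lem:unions1s2correctbound} applied to $S_1=\{d\}$, $S_2=\{0\}$. Both are singletons, so $\mathcal{I}(g_{S_1})=\mathcal{I}(g_{S_2})=0$ and the only inequality invoked in that lemma's proof, namely $\sqrt{A+B+C}\leq\sqrt{A}+\sqrt{B}+\sqrt{C}$, becomes an equality; meanwhile $\gamma_{S_1,S_2}(c)=\sqrt{g_{S_1}(c)/g_{S_2}(c)}=c^{d}$ has no critical point of odd multiplicity in $(0,1)$, so $R_{S_1,S_2}=\arctan(1)-\arctan(0)=\pi/4$, and again $z_S=z_{S_1}+z_{S_2}+\frac{1}{\pi}R_{S_1,S_2}=\frac{1}{4}$.
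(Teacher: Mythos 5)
Your argument is correct and is essentially the paper's own proof: both reduce to $S=\{0,d\}$, compute $\sqrt{\mathcal{I}(g_S(t))}=\frac{2dt^{d-1}}{1+t^{2d}}$, and evaluate the resulting integral to $\frac{1}{4}$. You are slightly more explicit in deriving the square-root formula via Proposition~\ref{prop:edelmanreformulation} and in spelling out the $u=t^d$ substitution, but there is no difference in substance.
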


\begin{proof}
WLOG we can assume that $S=\{0,a\}$. We have: 

\[
z_{S}=\frac{1}{2\pi}\intop_{I}\sqrt{\mathcal{I}(g_{S}(t))}\mathrm{d}t.
\]

An easy calculation shows that $\sqrt{\mathcal{I}(g_{S}(t))}=\frac{2at^{a-1}}{1+t^{2a}}.$
Therefore: 
\[
z_{S}=\frac{2}{2\pi}\intop_{0}^{1}\frac{at^{a-1}}{1+t^{2a}}\mathrm{d}t=\frac{1}{4}.
\]
\hfill\end{proof}

Now we show that if we increase the sparsity of a polynomial by adding a monomial of degree higher than the degree of the polynomial, we can bound the expected number of real zeros of the resulting polynomial in terms of the bound on the expected number of zeros of the original polynomial.
\begin{lemma}
\label{lem:increasekbyone}Let $S\subseteq\N$ be a set with $0\in S$
and $\card S=k$. If $a\in\N$ is such that $a>\max(S)$ then:
\[
z_{S\cup\{a\}}\leq z_{S}+\frac{1}{\pi}\arctan\left(\frac{1}{\sqrt{k}}\right).
\]
\end{lemma}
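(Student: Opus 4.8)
The goal is to apply Lemma \ref{lem:unions1s2correctbound} with $S_1 = S$ and $S_2 = \{a\}$. Since $\mathcal{I}(g_{\{a\}}) = 0$ by the preceding proposition, we have $z_{\{a\}} = 0$, so Lemma \ref{lem:unions1s2correctbound} gives $z_{S \cup \{a\}} \leq z_S + \frac{1}{\pi} R_{S_1, S_2}$ (after possibly swapping the roles of $S_1$ and $S_2$ to meet the sign hypothesis — I will need to check which way the derivative of $g_S / g_{\{a\}}$ points at zero). So everything reduces to bounding $R_{S_1, S_2}$, and the claim is that this quantity is at most $\arctan(1/\sqrt{k})$, which is much sharper than the crude bound $\frac{m+1}{2}\pi$ from Corollary \ref{cor:mcriticalbound}.

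\textbf{Controlling the critical points.} With $g_{\{a\}}(t) = t^{2a}$ and $a > \max(S)$, the ratio $h(t) := g_S(t)/t^{2a} = \sum_{i} t^{2e_i - 2a}$ has all exponents $2e_i - 2a < 0$, so $h$ is a sum of $k$ strictly decreasing positive functions on $(0,1)$, hence strictly decreasing, hence has \emph{no} critical points in $(0,1)$ of any multiplicity — so $m = 0$ and $R_{S_1,S_2} = T^0$ is a single $\arctan$ difference. (This also tells me the sign convention: it is $g_{\{a\}}/g_S$, not $g_S/g_{\{a\}}$, that is increasing near $0$, so I should take $S_1 = \{a\}$, $S_2 = S$; then $\gamma_{S_1,S_2} = \sqrt{t^{2a}/g_S(t)}$, which is $0$ at $t=0$ and $\sqrt{1/k}$ at $t = 1$ since $g_S(1) = k$.) Then $R_{S_1,S_2} = \arctan(\gamma(1)) - \arctan(\gamma(0)) = \arctan(1/\sqrt{k}) - 0 = \arctan(1/\sqrt{k})$, which is exactly the bound claimed. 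The appearance of $\sqrt{k}$ instead of $\sqrt{k-1}$ is because $0 \in S$, so $g_S(1) = |S| = k$; one must double-check whether the statement wants $k = |S|$ (as written) and that $g_S(1) = k$.

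\textbf{Main obstacle.} The one genuine subtlety is the behavior of the integrand of $J$ at the endpoint $t = 0$: the substitution $u = \sqrt{g_{S_1}/g_{S_2}}$ in the proof of Lemma \ref{lem:unions1s2correctbound} is applied on subintervals of $(0,1)$, and here the relevant subinterval is all of $(0,1)$, whose left endpoint is exactly the point where $g_{\{a\}}(t) = t^{2a} \to 0$; I should confirm the integral $J$ (equivalently $z_{S\cup\{a\}}^{(0,1)}$) is still finite and that the limit $\lim_{t\to 0^+}\gamma(t) = 0$ is legitimate to plug in — this follows since $2a > 0$ forces $t^{2a}/g_S(t) \to 0$ as $t \to 0^+$ (using $g_S(0) = 1$ because $0 \in S$), so there is no convergence problem. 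Modulo this endpoint check, the lemma is a direct specialization of Lemma \ref{lem:unions1s2correctbound}: identify $m = 0$ from monotonicity, evaluate the single $T^0$ term at the two endpoints using $g_S(1) = k$ and $\gamma(0) = 0$, and conclude.
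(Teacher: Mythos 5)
Your proposal is correct and amounts to the same argument as the paper's: the key facts are that $\mathcal{I}(g_{\{a\}})=0$ so $z_{\{a\}}=0$, that $g_{\{a\}}/g_S$ is strictly increasing on $(0,1)$ so there are no odd-multiplicity critical points ($m=0$), and that $\gamma(0)=0$, $\gamma(1)=1/\sqrt{k}$ yield $R_{S_1,S_2}=\arctan(1/\sqrt{k})$. The only difference is presentational: you invoke Lemma \ref{lem:unions1s2correctbound} as a black box and specialize, whereas the paper re-runs the Proposition \ref{lem:abcdidentity} decomposition and the $u=\sqrt{g_{\{a\}}/g_S}$ substitution directly for this special case (exploiting $\mathcal{I}(g_{\{a\}})=0$ to drop the middle term and the monotonicity to avoid splitting the interval). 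Your endpoint-at-zero check is legitimate and matches what the paper implicitly relies on, since $0\in S$ forces $g_S(0)=1$ so the substitution extends continuously to $\gamma(0)=0$.
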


\begin{proof}
Let us first analyze the derivative of $\frac{g_{\{a\}}}{g_{S}}$.We have:

\begin{align}\label{gabygs_icreasing}
\pri{\left(\frac{g_{\{a\}}}{g_{S}}\right)} & =\frac{\left(g_{\{a\}}\right)'g_{S}-\left(g_{S}\right)'g_{\{a\}}}{g_{S}^{2}}=\frac{1}{g_{S}^{2}}\left(2ax^{2a-1}\sum_{e\in S}x^{2e}-x^{2a}\sum_{e\in S}2ex^{2e-1}\right)\\
&=\frac{2x^{2a-1}}{g_{S}^{2}}\left(\sum_{e\in S}(a-e)x^{2e}\right)>0.
\end{align}

Therefore $\frac{g_{\{a\}}}{g_{S}}$ is always increasing in $(0,1)$.
Hence we have:
\begin{align*}
z_{S\cup\{a\}} & =\frac{1}{2\pi}\intop_{0}^{1}\sqrt{\mathcal{I}(g_{S\cup\{a\}}(t))}\mathrm{d}t\\
 & =\frac{1}{2\pi}\intop_{0}^{1}\sqrt{\frac{g_{S}}{g_{S}+g_{\{a\}}}\cdot \mathcal{I}(g_{S})+\frac{g_{\{a\}}}{g_{S}+g_{\{a\}}}\cdot \mathcal{I}(g_{\{a\}})+\frac{1}{g_{S}g_{\{a\}}}\left(\frac{\pri{\left(g_{\{a\}}\right)}g_{S}-\pri{\left(g_{S}\right)}g_{\{a\}}}{g_{S}+g_{\{a\}}}\right)^{2}}\mathrm{d}t\\
 & \leq\frac{1}{2\pi}\cdot\left(\intop_{0}^{1}\sqrt{\mathcal{I}(g_{S}(t))}\mathrm{d}t+0+\intop_{0}^{1}\frac{1}{\sqrt{g_{S}g_{\{a\}}}}\left(\frac{\pri{\left(g_{\{a\}}\right)}g_{S}-\pri{\left(g_{S}\right)}g_{\{a\}}}{g_{S}+g_{\{a\}}}\right)\mathrm{d}t\right)\\
 & =z_{S}+\frac{1}{2\pi}\intop_{0}^{1}\frac{1}{\sqrt{g_{S}g_{\{a\}}}}\left(\frac{\pri{\left(g_{\{a\}}\right)}g_{S}-\pri{\left(g_{S}\right)}g_{\{a\}}}{g_{S}+g_{\{a\}}}\right)\mathrm{d}t.
\end{align*}

Now we use the substitution $u=\sqrt{\frac{g_{\{a\}}}{g_{s}}}$ to
obtain:

\begin{align*}
\intop_{0}^{1}\frac{1}{\sqrt{g_{S}g_{\{a\}}}}\left(\frac{\pri{\left(g_{\{a\}}\right)}g_{S}-\pri{\left(g_{S}\right)}g_{\{a\}}}{g_{S}+g_{\{a\}}}\right)\mathrm{d}t & =2\intop_{\alpha}^{\beta}\left(\frac{1}{1+u^{2}}\right)\mathrm{d}u\tag{Here \ensuremath{\alpha=\sqrt{\frac{g_{\{a\}}(0)}{g_{s}(0)}}=0} and \ensuremath{\beta}=\ensuremath{\sqrt{\frac{g_{\{a\}}(1)}{g_{s}(1)}}}=\ensuremath{\frac{1}{\sqrt{k}}}.}\\
 & =2\left(\arctan\left(\frac{1}{\sqrt{k}}\right)-\arctan\left(0\right)\right) \\ &=2\arctan\left(\frac{1}{\sqrt{k}}\right).
\end{align*}

Hence: 
\[
z_{S\cup\{a\}}\leq z_{S}+\frac{1}{2\pi}\intop_{0}^{1}\frac{1}{\sqrt{g_{S}g_{\{a\}}}}\left(\frac{\pri{\left(g_{\{a\}}\right)}g_{S}-\pri{\left(g_{S}\right)}g_{\{a\}}}{g_{S}+g_{\{a\}}}\right)\mathrm{d}t=z_{S}+\frac{1}{\pi}\arctan\left(\frac{1}{\sqrt{k}}\right).
\]
\hfill\end{proof}
\begin{theorem}[Theorem \ref{theorem:main} restated]\label{theorem1_restated}
\label{thm:sqrtkbound} Let $S\subseteq\N$ be a set with $0\in S$
and $\card S=k$ . Then $z_{S}\leq\frac{1}{4}+\frac{2}{\pi}(\sqrt{k-1}-1)\leq\frac{2}{\pi}\cdot\left(\sqrt{k-1}\right)$. 
\end{theorem}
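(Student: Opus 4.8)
The plan is to build up the set $S$ one monomial at a time, starting from a two-element set and applying Lemma~\ref{lem:increasekbyone} repeatedly. Since $0 \in S$, write $S = \{0 = e_1 < e_2 < \cdots < e_k\}$ and let $S_j := \{e_1, \ldots, e_j\}$ for $2 \le j \le k$, so that $S_k = S$ and each $S_{j+1} = S_j \cup \{e_{j+1}\}$ with $e_{j+1} > \max(S_j)$. The base case is Lemma~\ref{lem:randomrealrootcardinalitytwo}, which gives $z_{S_2} = \tfrac14$. Then Lemma~\ref{lem:increasekbyone} applied with the $j$-element set $S_j$ (so $k$ in that lemma is $j$) yields $z_{S_{j+1}} \le z_{S_j} + \tfrac{1}{\pi}\arctan\!\left(\tfrac{1}{\sqrt{j}}\right)$. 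Telescoping from $j = 2$ to $j = k-1$ gives
\[
z_S \le \frac14 + \frac{1}{\pi}\sum_{j=2}^{k-1}\arctan\!\left(\frac{1}{\sqrt{j}}\right).
\]

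Next I would bound the sum. Using $\arctan x \le x$ for $x \ge 0$, we get $\sum_{j=2}^{k-1}\arctan(1/\sqrt{j}) \le \sum_{j=2}^{k-1} j^{-1/2}$. This sum is then controlled by an integral comparison: since $t \mapsto t^{-1/2}$ is decreasing, $\sum_{j=2}^{k-1} j^{-1/2} \le \int_{1}^{k-1} t^{-1/2}\,\mathrm{d}t = 2\sqrt{k-1} - 2$. Plugging this in gives
\[
z_S \le \frac14 + \frac{2}{\pi}\left(\sqrt{k-1} - 1\right),
\]
which is the first claimed inequality.

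For the final inequality $\tfrac14 + \tfrac{2}{\pi}(\sqrt{k-1}-1) \le \tfrac{2}{\pi}\sqrt{k-1}$, it suffices to check that $\tfrac14 \le \tfrac{2}{\pi}$, i.e. $\pi \le 8$, which is obviously true; subtracting $\tfrac{2}{\pi}\sqrt{k-1}$ from both sides reduces the claim to $\tfrac14 - \tfrac{2}{\pi} \le 0$. This completes the proof.

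I do not expect any serious obstacle here: all the real work has been done in Lemma~\ref{lem:increasekbyone} (which is where the key identity of Proposition~\ref{lem:abcdidentity} and the $\arctan$ substitution were used) and in Lemma~\ref{lem:randomrealrootcardinalitytwo}. The only mild subtlety is making sure the induction is set up with the right index — in particular that when we add the $(j{+}1)$-st monomial the current set has exactly $j$ elements, so the increment is $\arctan(1/\sqrt{j})$ rather than $\arctan(1/\sqrt{j+1})$ — and being slightly careful that the integral comparison is applied over the correct range so that the constant comes out as $2(\sqrt{k-1}-1)$ and not something weaker. Neither of these is more than bookkeeping.
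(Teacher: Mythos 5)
Your proof is correct and follows essentially the same route as the paper: start from the two-element base case (Lemma~\ref{lem:randomrealrootcardinalitytwo}), add the remaining exponents one at a time from smallest to largest so that Lemma~\ref{lem:increasekbyone} applies with increment $\tfrac{1}{\pi}\arctan(1/\sqrt{j})$, then bound the telescoped sum via $\arctan x \le x$ and an integral comparison. The only differences are presentational: you spell out the indexing and the final $\tfrac14 \le \tfrac{2}{\pi}$ check, which the paper leaves implicit.
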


\begin{proof}
If $k\leq2$ then the results follows from Lemma \ref{lem:randomrealrootcardinalitytwo}.
So assume $k>2$. By using Lemma \ref{lem:randomrealrootcardinalitytwo}
and Lemma \ref{lem:increasekbyone}, we obtain that:
\begin{equation*}
    z_{S}\leq\frac{1}{4}+\frac{1}{\pi}\sum_{i=2}^{k-1}\arctan\left(\frac{1}{\sqrt{i}}\right).\tag{We always add the highest element iteratively}
\end{equation*}

We use the following well known inequality: 
\begin{align*}
\arctan(x) & <x\text{ for all }x>0.
\end{align*}

This implies that:
\[
z_{S}\leq\frac{1}{4}+\frac{1}{\pi}\sum_{i=2}^{k-1}\frac{1}{\sqrt{i}}.
\]

Now notice that:
\[
\sum_{i=2}^{k-1}\frac{1}{\sqrt{i}}\leq\int_{1}^{k-1}\sqrt{\frac{1}{x}}\mathrm{d}x=2(\sqrt{k-1}-1).
\]

Hence the claimed bound follows.
\hfill\end{proof}

\section{Roots concentrate around 1: Proof of Theorem \ref{theorem:rootsfarmfromone}}

Here we want to show that most of the roots are near 1. First we need the following proposition useful in the analysis.
\begin{proposition}
\label{claim:easyclaim}For all $t\in(0,1),$ we have $\sqrt{\sum_{e>0}e^{2}t{}^{2e-2}}\leq\frac{1}{1-t^{2}}+\frac{2t}{(1-t^{2})^{\frac{3}{2}}}$.
\end{proposition}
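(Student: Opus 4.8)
The plan is to first replace the left-hand side by a closed form. Setting $x=t^{2}\in(0,1)$, we have $\sum_{e>0}e^{2}t^{2e-2}=\sum_{e\geq1}e^{2}x^{e-1}$, and this sum can be evaluated by differentiating the geometric series twice: from $\sum_{e\geq1}x^{e}=\frac{x}{1-x}$ one gets $\sum_{e\geq1}ex^{e-1}=\frac{1}{(1-x)^{2}}$, hence $\sum_{e\geq1}ex^{e}=\frac{x}{(1-x)^{2}}$, and one further differentiation gives $\sum_{e\geq1}e^{2}x^{e-1}=\frac{1+x}{(1-x)^{3}}$. Substituting back, $\sum_{e>0}e^{2}t^{2e-2}=\frac{1+t^{2}}{(1-t^{2})^{3}}$.

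With this closed form available, I would multiply both sides of the claimed inequality by $(1-t^{2})^{3/2}>0$, which reduces it to the equivalent statement $\sqrt{1+t^{2}}\leq\sqrt{1-t^{2}}+2t$. Both sides here are strictly positive on $(0,1)$, so this is equivalent to the inequality obtained by squaring: $1+t^{2}\leq(1-t^{2})+4t\sqrt{1-t^{2}}+4t^{2}=1+3t^{2}+4t\sqrt{1-t^{2}}$. Cancelling $1+t^{2}$ from both sides, what remains is $0\leq2t^{2}+4t\sqrt{1-t^{2}}$, which holds trivially for $t\in(0,1)$, finishing the argument.

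I would also remark that since $\sum_{e\in S,\,e>0}e^{2}t^{2e-2}\leq\sum_{e>0}e^{2}t^{2e-2}$ for every $S\subseteq\N$, the same bound applies when the sum is restricted to the exponents of a given support, which is the form in which the proposition is used later (e.g. to control $\norm{\pri{v_{S}}(t)}_{2}^{2}$). There is no genuine obstacle in this proof; the only two points that warrant a moment's care are the evaluation of the series via term-by-term differentiation (legitimate inside the radius of convergence) and the observation that squaring preserves the inequality because both sides are positive.
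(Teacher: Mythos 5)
Your proof is correct, and it takes a somewhat different route from the paper's. You compute the exact closed form $\sum_{e\geq1}e^{2}x^{e-1}=\frac{1+x}{(1-x)^{3}}$ by differentiating the geometric series twice (with an intervening multiplication by $x$), so that after the substitution $x=t^{2}$ the claim reduces, upon clearing the factor $(1-t^{2})^{3/2}$, to the clean pointwise inequality $\sqrt{1+t^{2}}\leq\sqrt{1-t^{2}}+2t$, which you finish by squaring. The paper instead avoids ever computing the exact closed form: it uses only $\dpri{\left(\frac{1}{1-t^{2}}\right)}$, first relaxes $e^{2}\leq e(2e-1)$ (valid for $e\geq1$) so that the sum matches this second derivative exactly, arriving at $\sqrt{\frac{1+3t^{2}}{(1-t^{2})^{3}}}$, and then splits this as $\sqrt{\frac{1}{(1-t^{2})^{2}}+\frac{4t^{2}}{(1-t^{2})^{3}}}$ and applies subadditivity $\sqrt{a+b}\leq\sqrt{a}+\sqrt{b}$. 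Your intermediate bound $\frac{1+t^{2}}{(1-t^{2})^{3}}$ is slightly sharper than the paper's $\frac{1+3t^{2}}{(1-t^{2})^{3}}$, though both collapse to the same stated inequality. Your version is marginally more self-contained (a single elementary inequality at the end versus two separate relaxations), while the paper's has the advantage of requiring only one differentiation of the generating function; the difference is stylistic rather than substantive, and both are fully rigorous.
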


\begin{proof}
First use the following well known equality:

\[
\frac{1}{1-t^{2}}=\sum_{e\geq0}t^{2e}.
\]

Using this, we obtain that:

\[
\dpri{\left(\frac{1}{1-t^{2}}\right)}=\sum_{e>0}2e(2e-1)t^{2e-2}=\frac{2(1+3t^{2})}{(1-t^{2})^{3}}.
\]

Therefore:

\[
\sum_{e>0}e(2e-1)t^{2e-2}=\frac{(1+3t^{2})}{(1-t^{2})^{3}}.
\]

Clearly:

\begin{align*}
\sqrt{\sum_{e>0}e^{2}t{}^{2e-2}} & \leq\sqrt{\sum_{e>0}e(2e-1)t^{2e-2}}\leq\sqrt{\frac{(1+3t^{2})}{(1-t^{2})^{3}}}=\sqrt{\frac{1}{(1-t^{2})^{2}}+\frac{4t^{2}}{(1-t^{2})^{3}}}\\
 & \leq\frac{1}{1-t^{2}}+\frac{2t}{(1-t^{2})^{\frac{3}{2}}}.
\end{align*}
\hfill\end{proof}

We now give the proof of Theorem \ref{theorem:rootsfarmfromone}.

\begin{proof} (Proof of Theorem \ref{theorem:rootsfarmfromone}). 
WLOG, we can assume that $0\in S$, therefore $\norm{v_{S}(t)}_{2}\geq1$
for all $t\in\R$. By using the equality in Theorem \ref{theorem:realrootsboundkostlan} and also by ignoring the second term in \eqref{zsreqinte}, we get the following
inequality for $z_{S}$:
\begin{align*}
z_{S}^{(0,1-\epsilon)} & \leq\frac{1}{\pi}\intop_{0}^{1-\epsilon}\frac{\sqrt{(\norm{v_{S}(t)}_{2}\cdot\norm{\pri{v_{S}}(t)}_{2})^{2}}}{(\norm{v_{S}(t)}_{2})^{2}}\mathrm{d}t=\frac{1}{\pi}\intop_{0}^{1-\epsilon}\frac{\norm{\pri{v_{S}}(t)}_{2}}{\norm{v_{S}(t)}_{2}}\mathrm{d}t\\
 & \leq\frac{1}{\pi}\intop_{0}^{1-\epsilon}\norm{\pri{v_{S}}(t)}_{2}\mathrm{d}t
\end{align*}

By using \ref{claim:easyclaim}, we have: $\norm{\pri{v_{S}}(t)}_{2}=\sqrt{\sum_{e\in S}e^{2}t^{2e-2}}\leq\frac{1}{1-t^{2}}+\frac{2t}{(1-t^{2})^{\frac{3}{2}}}$.
Therefore: 

\begin{align*}
z_{S}^{(0,1-\epsilon)} & \leq\frac{1}{\pi}\intop_{0}^{1-\epsilon}\norm{\pri{v_{S}}(t)}_{2}\mathrm{d}t\leq\frac{1}{\pi}\intop_{0}^{1-\epsilon}\left(\frac{1}{1-t^{2}}+\frac{2t}{(1-t^{2})^{\frac{3}{2}}}\right)\mathrm{d}t\\
 & =\frac{1}{\pi}\left(\intop_{0}^{1-\epsilon}\frac{1}{1-t^{2}}\mathrm{d}t+\frac{1}{\pi}\intop_{0}^{1-\epsilon}\frac{2t}{(1-t^{2})^{\frac{3}{2}}}\mathrm{d}t\right) \\ &=\frac{1}{\pi}\left(\left[\frac{1}{2} \log\left(\frac{1+t}{1-t}\right)\right]_{0}^{1-\epsilon}+\left[\frac{2}{\sqrt{1-t^{2}}}\right]_{0}^{1-\epsilon}\right)\\
 & =\frac{1}{\pi}\left(\frac{1}{2}\log\left(\frac{2-\epsilon}{\epsilon}\right)+\frac{2}{\sqrt{\epsilon(2-\epsilon)}}-2\right)\leq\frac{1}{2\pi}\left(\log\left(\frac{2}{\epsilon}\right)+\frac{4}{\sqrt{\epsilon}}-4\right).
\end{align*}
\hfill\end{proof}

\section{The lower bound}\label{lb}
In this section we will come up with a sequence of sets $(S_k)_{k\geq1}$ such that the expected number of real zeros of the corresponding polynomials is lower bounded by $\Omega(\sqrt{k})$, for large enough  $k$.

\begin{lemma}\label{lem:intI}
Suppose $S=\{e_1,e_2,\dots,e_k\}$ with $e_k=\max(S)$ and $b\geq 1$. If $I_1=\intop_{1-\frac{1}{b}}^{1}\sqrt{\mathcal{I}(g_{S})}\mathrm{d}t$, then $I_1\leq \frac{2(k+1)e_k}{b}.$
\end{lemma}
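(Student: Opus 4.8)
The plan is to bound the integrand $\sqrt{\mathcal{I}(g_{S}(t))}$ pointwise on $(0,1)$ by a quantity depending only on $k$ and $e_{k}$, and then integrate over $\left(1-\tfrac1b,1\right)$, whose length is $\tfrac1b\le 1$ since $b\ge 1$.

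First I would invoke the standing convention (legitimate because $0\notin I$) to assume $0\in S$, so that $\norm{v_{S}(t)}_{2}=\sqrt{g_{S}(t)}\ge 1$ on $(0,1)$. Comparing the formula of Theorem \ref{theorem:realrootsboundkostlan} with that of Lemma \ref{z_sequaltorootigs} (as already noted in the remark following the latter) gives, for $t\in(0,1)$,
\[
\sqrt{\mathcal{I}(g_{S}(t))}=\frac{2\sqrt{(\norm{v_{S}(t)}_{2}\cdot\norm{\pri{v_{S}}(t)}_{2})^{2}-(v_{S}(t)\cdot\pri{v_{S}}(t))^{2}}}{(\norm{v_{S}(t)}_{2})^{2}}\le\frac{2\norm{\pri{v_{S}}(t)}_{2}}{\norm{v_{S}(t)}_{2}}\le 2\norm{\pri{v_{S}}(t)}_{2},
\]
where the first inequality drops the term $-(v_{S}\cdot\pri{v_{S}})^{2}$, which is nonpositive by Cauchy--Schwarz, and the second uses $\norm{v_{S}(t)}_{2}\ge 1$. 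Then I would estimate $\norm{\pri{v_{S}}(t)}_{2}$ crudely: $\norm{\pri{v_{S}}(t)}_{2}^{2}=\sum_{e\in S}e^{2}t^{2e-2}$, and on $(0,1)$ the summand for $e=0$ vanishes while every summand with $e\ge 1$ is at most $e_{k}^{2}$; as there are at most $k$ terms, $\norm{\pri{v_{S}}(t)}_{2}\le\sqrt{k}\,e_{k}\le(k+1)e_{k}$. Hence $\sqrt{\mathcal{I}(g_{S}(t))}\le 2(k+1)e_{k}$ for all $t\in(0,1)$, and integrating over the interval of length $\tfrac1b$ yields $I_{1}\le \tfrac{2(k+1)e_{k}}{b}$, as claimed.

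I do not expect a genuine obstacle: the desired bound is deliberately lossy, so the only points needing a little care are (i) using $0\in S$ to keep $\norm{v_{S}(t)}_{2}$ bounded below by $1$ --- without this one would only get $\norm{v_{S}(t)}_{2}\ge t^{e_{1}}$, producing a non-integrable $\tfrac1t$ factor near $0$ precisely in the worst case $b=1$ --- and (ii) observing that the $e=0$ exponent contributes nothing to $\norm{\pri{v_{S}}(t)}_{2}$. As an alternative that avoids Theorem \ref{theorem:realrootsboundkostlan} entirely, one can start from Proposition \ref{prop:edelmanreformulation}: since $t^{c}\le 1$ for $t\in(0,1)$ (as $c=e_{i}+e_{j}-1\ge 0$) and $|e_{i}-e_{j}|\le e_{k}$ over the $\binom{k}{2}$ pairs, $\mathcal{I}(g_{S}(t))\le\frac{4}{g_{S}(t)^{2}}\binom{k}{2}e_{k}^{2}\le 2k(k-1)e_{k}^{2}\le\big(2(k+1)e_{k}\big)^{2}$, giving the same pointwise bound and hence the same conclusion.
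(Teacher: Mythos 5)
Your proposal is correct, and your primary route is genuinely different from the paper's. The paper's proof works directly from Proposition \ref{prop:edelmanreformulation}: it replaces each term $(e_i-e_j)^2 t^{2c}$ in the numerator of $\mathcal{I}(g_S)$ by $e_k^2$ (using $t^{2c}\le 1$ and $|e_i-e_j|\le e_k$), bounds the number of such pairs generously by $(k+1)^2$, and uses $g_S\ge 1$ (i.e.\ $0\in S$, left implicit there) to arrive at the pointwise bound $\sqrt{\mathcal{I}(g_S)}\le 2(k+1)e_k$; this is exactly the ``alternative'' you sketch at the end, only with the sharper count $\binom{k}{2}$ in place of $(k+1)^2$. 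Your main argument instead starts from the Edelman--Kostlan integrand of Theorem \ref{theorem:realrootsboundkostlan}, drops the nonpositive $-(v_S\cdot\pri{v_S})^2$ via Cauchy--Schwarz to get $\sqrt{\mathcal{I}(g_S(t))}\le \frac{2\norm{\pri{v_S}(t)}_2}{\norm{v_S(t)}_2}\le 2\norm{\pri{v_S}(t)}_2$ (again invoking $0\in S$, which you make explicit and correctly flag as the spot that prevents blow-up near $0$ when $b=1$), and then bounds $\norm{\pri{v_S}(t)}_2\le\sqrt{k}\,e_k$ by observing that each nonzero summand $e^2t^{2e-2}$ is at most $e_k^2$ on $(0,1)$. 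This is slightly tighter pointwise ($2\sqrt{k}\,e_k$ rather than $\approx \sqrt{2}\,k\,e_k$), avoids the pair-counting reformulation altogether, and reuses machinery already set up for Theorem \ref{theorem:rootsfarmfromone}; the paper's version has the advantage of staying entirely inside the $\mathcal{I}(g_S)$ formalism that the surrounding section is built on. Both approaches deliver the stated bound.
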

\begin{proof}
We have:
\begin{align*}
    \sqrt{\mathcal{I}(g_{S})}&=2\frac{\sqrt{\sum_{c}\sum_{\substack{i<j\\e_{i}+e_{j}-1=c}}(e_{i}-e_{j})^2t^{2c}}}{g_S}\tag{By using Proposition \ref{prop:edelmanreformulation}}\\
    &\leq 2\frac{\sqrt{\sum_{c}\sum_{\substack{i<j\\e_{i}+e_{j}-1=c}}(e_{k+1})^2}}{g_S}\\
    &\leq 2\frac{\sqrt{(k+1)^2e_k^2}}{g_S}\\
    &\leq 2(k+1)e_k.
\end{align*}
Therefore, we have: $I_1\leq \intop_{1-\frac{1}{b}}^{1}2(k+1)e_k=\frac{2(k+1)e_k}{b}.$
\hfill\end{proof}
\begin{remark}\label{rem:arbitsmall}
In the view of the above lemma, we can have $z_S^I$ arbitrarily small, with $I=(1-\frac{1}{b},1)$ for a large enough $b$. This fact will be crucial in the proof of Theorem \ref{theorem:lowerbound}. Further, Lemma \ref{lem:intI} can be viewed as a supplementary result to Theorem \ref{theorem:rootsfarmfromone}. Theorem \ref{theorem:rootsfarmfromone} implies that most of the roots lie in $(0,1-\epsilon)$, if $\epsilon$ is allowed to be arbitrarily small. Lemma \ref{lem:intI} gives a precise formulation of this fact. 
\end{remark}

From now on we will assume that $S=\{0,1\}\bigcup\{2^{2^i}\ |\ 1\leq i\leq k-1\}$ and $a=2^{2^{k}}.$ The following lemma essentially will imply that one cannot avoid summing over $\sqrt{\frac{1}{k}}$ as in the proof of Theorem \ref{theorem1_restated}. 
\begin{lemma}\label{lem:intW}
Let $W=\frac{1}{g_{S}g_{\{a\}}}\left(\frac{\pri{\left(g_{\{a\}}\right)}g_{S}-\pri{\left(g_{S}\right)}g_{\{a\}}}{g_{S}+g_{\{a\}}}\right)^{2}$, then $\intop_{1-\frac{1}{2a}}^{1}\sqrt{W}\mathrm{d}t\geq \frac{c}{\sqrt{k}}$ for some real constant $c>0$.
\end{lemma}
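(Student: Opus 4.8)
The plan is to re-run the substitution computation from the proof of Lemma~\ref{lem:increasekbyone}, but on the small interval $(1-\tfrac{1}{2a},1)$ instead of on all of $(0,1)$. Since $a=2^{2^{k}}>2^{2^{k-1}}=\max(S)$ and $0\in S$, equation~\eqref{gabygs_icreasing} shows $g_{\{a\}}/g_{S}$ is strictly increasing on $(0,1)$, so $g_{\{a\}}'g_{S}-g_{S}'g_{\{a\}}=g_{S}^{2}(g_{\{a\}}/g_{S})'\ge 0$ and $\sqrt{W}=\frac{1}{\sqrt{g_{S}g_{\{a\}}}}\cdot\frac{g_{\{a\}}'g_{S}-g_{S}'g_{\{a\}}}{g_{S}+g_{\{a\}}}$. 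Writing $t_{0}:=1-\tfrac{1}{2a}$ and substituting $u=\sqrt{g_{\{a\}}/g_{S}}$ exactly as there, using $g_{\{a\}}(1)=1$, $g_{S}(1)=|S|=k+1$ and $g_{\{a\}}(t_{0})=t_{0}^{2a}$, I would obtain
\[
\intop_{1-\frac{1}{2a}}^{1}\sqrt{W}\,\mathrm{d}t=2\left(\arctan\frac{1}{\sqrt{k+1}}-\arctan\frac{t_{0}^{a}}{\sqrt{g_{S}(t_{0})}}\right),
\]
so the statement reduces to showing this difference of arctangents is $\Omega(1/\sqrt{k})$. Note $\frac{t_{0}^{a}}{\sqrt{g_{S}(t_{0})}}\le\frac{1}{\sqrt{k+1}}$ already by monotonicity of $g_{\{a\}}/g_{S}$, so the difference is non-negative and it is only its size that is at issue.

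For that I would estimate the two arguments separately. The easy one is $t_{0}^{a}=(1-\tfrac{1}{2a})^{a}\le e^{-1/2}$, from $1-x\le e^{-x}$. The other ingredient is that $g_{S}(t_{0})$ is close to its maximal possible value $k+1$: since $a\notin S$, every $e\in S$ satisfies $e\le\max(S)=2^{2^{k-1}}=\sqrt{a}$, so Bernoulli's inequality gives $t_{0}^{2e}=(1-\tfrac{1}{2a})^{2e}\ge 1-\tfrac{e}{a}\ge 1-\tfrac{1}{\sqrt{a}}$, whence $g_{S}(t_{0})=\sum_{e\in S}t_{0}^{2e}\ge(k+1)(1-\tfrac{1}{\sqrt{a}})$. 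Because $\sqrt{a}=2^{2^{k-1}}$ grows doubly exponentially, $1/\sqrt{a}\to 0$, and hence $\frac{t_{0}^{a}}{\sqrt{g_{S}(t_{0})}}\le\frac{e^{-1/2}}{\sqrt{k+1}}\cdot\frac{1}{\sqrt{1-1/\sqrt{a}}}=\frac{e^{-1/2}+o(1)}{\sqrt{k+1}}$. Using the elementary bound $\arctan x-\arctan y\ge\frac{x-y}{1+x^{2}}\ge\frac{x-y}{2}$ valid for $1\ge x\ge y\ge 0$, this yields, for all sufficiently large $k$,
\[
\arctan\frac{1}{\sqrt{k+1}}-\arctan\frac{t_{0}^{a}}{\sqrt{g_{S}(t_{0})}}\ge\frac{1}{2\sqrt{k+1}}\left(1-e^{-1/2}-o(1)\right),
\]
and since $1-e^{-1/2}>0$ this is at least $\frac{c'}{\sqrt{k}}$ for a suitable $c'>0$; doubling gives $\intop_{1-1/(2a)}^{1}\sqrt{W}\,\mathrm{d}t\ge\frac{c}{\sqrt{k}}$. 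For the finitely many remaining small $k$ the left-hand side is a fixed positive real, so shrinking $c$ handles them too (consistent with the ``large enough $k$'' in Theorem~\ref{theorem:lowerbound}).

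The one genuinely delicate point — and the reason the support was chosen doubly exponentially — is the estimate on $g_{S}(t_{0})$. A bound of the form $g_{S}(t_{0})=\Omega(k)$ would \emph{not} be enough: if $g_{S}(t_{0})$ were only comparable to $k$ rather than within a $(1-o(1))$ factor of $k+1$, the two arctangents above would both be of order $1/\sqrt{k}$ and their difference could be negligible. What rescues the argument is that $\max(S\setminus\{a\})/a\le 1/\sqrt{a}\to 0$, so each monomial $t_{0}^{2e}$ with $e\in S$ is within $o(1)$ of $1$ at $t=t_{0}$, forcing $g_{S}(t_{0})=(k+1)(1-o(1))$, while the single extra monomial $t_{0}^{2a}=(1-\tfrac{1}{2a})^{2a}$ stays bounded away from $1$ (it is $\le e^{-1}$). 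Everything else is a routine combination of Bernoulli's inequality with elementary estimates for $\arctan$.
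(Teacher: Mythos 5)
Your proof is correct and takes essentially the same route as the paper: re-use the $u=\sqrt{g_{\{a\}}/g_{S}}$ substitution from Lemma~\ref{lem:increasekbyone} over the restricted interval, bound $g_{\{a\}}(t_0)\le 1/e$ via $(1-\tfrac{1}{2a})^{2a}\le e^{-1}$, bound $g_{S}(t_0)$ from below via Bernoulli's inequality using the doubly-exponential growth of the support, and deduce that the resulting difference of arctangents is $\Omega(1/\sqrt{k})$. The only cosmetic difference is the final elementary step (your MVT-based bound $\arctan x-\arctan y\ge \tfrac{x-y}{1+x^2}$ versus the paper's arctan subtraction formula), which leads to the same conclusion.
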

\begin{proof}
Using the computation in the proof of Lemma \ref{lem:increasekbyone} we have:
$$
\intop_{1-\frac{1}{2a}}^{1}\sqrt{W}\mathrm{d}t
 =2\left(\arctan\left(\frac{1}{\sqrt{k+1}}\right)-\arctan\left(\sqrt{\frac{g_{\{a\}}(1-\frac{1}{2a})}{g_S(1-\frac{1}{2a})}}\right)\right).
$$
We now upper bound the value of $\arctan\left(\sqrt{\frac{g_{\{a\}}(1-\frac{1}{2a})}{g_S(1-\frac{1}{2a})}}\right)$ by giving a lower bound on $g_S(1-\frac{1}{2a})$ and an upper bound on $g_{\{a\}}(1-\frac{1}{2a}).$ Using the well known inequalities $\left(1-\frac{1}{n}\right)^{n}\leq\frac{1}{e}$ (for any $n\in\N$) and $$(1+x)^r\geq 1+rx\ \mathrm{if}\ x\geq -1\ \mathrm{and}\ r>1,$$ we have, for large enough $k$:
\begin{equation}\label{eq:gaps}
g_S(1-\frac{1}{2a})=\sum\limits_{i=1}^{k+1} \left(1-\frac{1}{2a}\right)^{2e_i}\geq \sum\limits_{i=1}^{k+1} \left(1-\frac{2e_i}{2a}\right)\geq k+1-\left(\sum\limits_{i=1}^{k+1}2^{-k}\right)\geq k
\end{equation}
Therefore, 
\begin{align*}
    \arctan\left(\sqrt{\frac{g_{\{a\}}(1-\frac{1}{2a})}{g_S(1-\frac{1}{2a})}}\right)&\leq\arctan\left(\sqrt{\frac{\frac{1}{e}}{k}}\right)\\
    \Rightarrow 2\left(\arctan\left(\frac{1}{\sqrt{k+1}}\right)-\arctan\left(\sqrt{\frac{g_{\{a\}}(1-\frac{1}{2a})}{g_S(1-\frac{1}{2a})}}\right)\right)&\geq  2\arctan\left(\frac{1}{\sqrt{k+1}}\right)\\
    &\hspace{.15in} -2\arctan\left(\sqrt{\frac{\frac{1}{e}}{k}}\right)\\
    &\geq2\left(\arctan\left(\frac{\frac{1}{\sqrt{k+1}}-\frac{1}{{e\sqrt{k}}}}{1+\frac{1}{e\sqrt{k(k+1)}}}\right)\right) \\
    &=2\left(\arctan(\frac{c'}{\sqrt{k}})\right) \tag{for some $c'>0$}.
\end{align*}
\hfill\end{proof}

\subsection{Proof of Theorem \ref{theorem:lowerbound}}
For proving Theorem \ref{theorem:lowerbound} we will again resort to our idea of monomial-wise construction of polynomial. The monomial sequence we choose is $e_{i+2}=2^{2^{i}}$ for $i\geq 1$ with $e_1=0,e_2=1$. Before we begin the proof, recall from the proof of Lemma \ref{lem:increasekbyone} that \begin{align*}
    z_{S\cup\{a\}}&=\frac{1}{2\pi}\intop_{0}^{1}\sqrt{\frac{g_{S}}{g_{S}+g_{\{a\}}}\cdot \mathcal{I}(g_{S})+\frac{g_{\{a\}}}{g_{S}+g_{\{a\}}}\cdot \mathcal{I}(g_{\{a\}})+\frac{1}{g_{S}g_{\{a\}}}\left(\frac{\pri{\left(g_{\{a\}}\right)}g_{S}-\pri{\left(g_{S}\right)}g_{\{a\}}}{g_{S}+g_{\{a\}}}\right)^{2}}\mathrm{d}t\\
    &=\frac{1}{2\pi}\intop_{0}^{1}\sqrt{\frac{g_{S}}{g_{S}+g_{\{a\}}}\cdot \mathcal{I}(g_{S})+\frac{g_{\{a\}}}{g_{S}+g_{\{a\}}}\cdot \mathcal{I}(g_{\{a\}})+W}\mathrm{d}t\tag{using the notation in Lemma \ref{lem:intW}}.
\end{align*}
The key idea is to write $z_{S\cup\{a\}}$ as a sum of two integrals over disjoint intervals such that $\mathcal{I}(g_S)$ dominates in one interval while $W$ dominates in the other. The rest of the proof  is about proving lower bounds on these two integrals.

\begin{proof}
We have:
\begin{align*}
z_{S\cup\{a\}} & =\frac{1}{2\pi}\intop_{0}^{1}\sqrt{\frac{g_{S}}{g_{S}+g_{\{a\}}}\cdot \mathcal{I}(g_{S})+\frac{g_{\{a\}}}{g_{S}+g_{\{a\}}}\cdot \mathcal{I}(g_{\{a\}})+W}\mathrm{d}t\\
 & =\frac{1}{2\pi}\cdot\left(\intop_{0}^{1}\sqrt{\frac{g_{S}}{g_{S}+g_{\{a\}}}\cdot \mathcal{I}(g_{S})+0+W}\mathrm{d}t\right)\\
    &=\frac{1}{2\pi}\left(\intop_{0}^{1-\frac{1}{2a}}\sqrt{\frac{g_{S}}{g_{S}+g_{\{a\}}}\cdot \mathcal{I}(g_{S})+W}\mathrm{d}t+\intop_{1-\frac{1}{2a}}^{1}\sqrt{\frac{g_{S}}{g_{S}+g_{\{a\}}}\cdot \mathcal{I}(g_{S})+W}\mathrm{d}t\right)\\
    &\geq \frac{1}{2\pi}\left(\intop_{0}^{1-\frac{1}{2a}}\sqrt{\frac{g_{S}}{g_{S}+g_{\{a\}}}\cdot \mathcal{I}(g_{S})}\mathrm{d}t+\intop_{1-\frac{1}{2a}}^{1}\sqrt{W}\mathrm{d}t\right)\\
    &=\frac{1}{2\pi}\left(\intop_{0}^{1}\sqrt{\frac{g_{S}}{g_{S}+g_{\{a\}}}\cdot \mathcal{I}(g_{S})}\mathrm{d}t-\intop_{1-\frac{1}{2a}}^{1}\sqrt{\frac{g_{S}}{g_{S}+g_{\{a\}}}\cdot \mathcal{I}(g_{S})}\mathrm{d}t+\intop_{1-\frac{1}{2a}}^{1}\sqrt{W}\mathrm{d}t\right)\\
    &\geq \frac{1}{2\pi}\left(\sqrt{\frac{k+1}{k+2}}\cdot\intop_{0}^{1}\sqrt{\mathcal{I}(g_{S})}\mathrm{d}t-\intop_{1-\frac{1}{2a}}^{1}\sqrt{\mathcal{I}(g_{S})}\mathrm{d}t+\intop_{1-\frac{1}{2a}}^{1}\sqrt{W}\mathrm{d}t\right)\tag{ $\frac{g_{\{a\}}}{g_S}$ is increasing (Equation \eqref{gabygs_icreasing})}\\
    &= \sqrt{\frac{k+1}{k+2}}\cdot z_S+\frac{1}{2\pi}\left(- \intop_{1-\frac{1}{2a}}^{1}\sqrt{\mathcal{I}(g_{S})}\mathrm{d}t+\intop_{1-\frac{1}{2a}}^{1}\sqrt{W}\mathrm{d}t\right).
\end{align*}
Now by using Lemma \ref{lem:intI} with $b=2a$ and Lemma \ref{lem:intW} we have:
\begin{align*}
    z_{S\cup\{a\}}&\geq\sqrt{\frac{k+1}{k+2}}\cdot z_S+\frac{1}{2\pi}\left(\intop_{1-\frac{1}{2a}}^{1}\sqrt{W}\mathrm{d}t-I_1\right)\\
    &\geq\sqrt{\frac{k+1}{k+2}}\cdot z_S+\frac{1}{\pi}\arctan\left(\frac{c'}{\sqrt{k}}\right)-\frac{k+1}{2\pi2^{2^{k-1}}}.
\end{align*}
By a generalization of the Shafer-Fink inequality \cite[Theorem 1]{aurizio}, we have  $z_{S\cup\{a\}}\geq \sqrt{\frac{k+1}{k+2}}\cdot z_S+\frac{c''}{\sqrt{k}}$ for some $c''\in (0,1)$. Therefore, for large enough $k$, and $i$ such that $k-i$ is large:

$$z_{S\cup\{a\}}\geq \sqrt{\frac{k+1-i}{k+2}}\cdot
z_{S_{k-i}}+c''\left(\sum\limits_{j=0}^i\frac{1}{\sqrt{k-j}}\cdot\sqrt{\frac{k+2-j}{k+2}}\right),$$
where $S_{k-i}=\{2^{2^\ell}\ |\ 1\leq \ell \leq k-i-1\}\cup\{0,1\}$. Now let $i=[k/2]$, where $[x]$ denotes the greatest integer less than or equal to $x$. Hence, we have:
\begin{align*}
z_{S\cup\{a\}}&\geq \sqrt{\frac{k+1-i}{k+2}}\cdot
z_{S_{k-i}}+c''\left(\sum\limits_{j=0}^{[k/2]}\frac{1}{\sqrt{k-j}}\cdot\sqrt{\frac{k+2-j}{k+2}}\right)\\
&\geq \frac{1}{\sqrt{2}}\cdot z_{S_{[k/2]}}+c''([k/2]+1)\frac{1}{\sqrt{2}\sqrt{k}}\\
&\geq c'''\sqrt{k} \tag{for some real number $c'''$}.
\end{align*}
This proves the theorem.
\hfill\end{proof}

\section{Conclusion}

We settle the bound on the expected number of real zeros of a random $k$-sparse polynomials when the coefficients are independent standard normal random variables. We first showed an $O(\sqrt{k})$ upper bound for an arbitrary set of size $k$, and then gave an example of set where this bound is tight. We see this as another step towards understanding the number of real zeros of sparse polynomials and related generalizations. 

In this article, we considered random variables following independent standard normal distributions. It would be interesting to study other distributions on the coefficients, although we expect analysis to become increasingly difficult as the distributions become more complex.

We also mentioned how the real $\tau$-conjecture is connected to the problem we study and its importance in algebraic complexity. Towards resolving the conjecture, consider the simple setting where $f$ and $g$ are both $k$-sparse polynomials and we wish to study the number of real zeros of $f g + 1$. This is essentially the first case which is non-trivial, unfortunately very little is known and prior techniques seem to fail so far.

Also, there is a vast number of restricted arithmetic circuit models. We invite the community, especially experts on these models, to consider the number of real zeros of univariate polynomials under such restrictions and explore their connections with complexity theoretic lower bounds. It is conceivable that one can find a restriction for which the behavior of the expected number of real zeros is easier to understand than the sparse case and which may lead to new insights towards resolving the aforementioned generalizations, such as the ones considered in the real $\tau$-conjecture.

\subsection*{Acknowledgements}
We thank our advisor Markus Bl\"aser for his constant support throughout the work. We thank Vladimir Lysikov for many insightful discussions on the topic. AP thanks S\'ebastien Tavenas for hosting him at Universit\'e Savoie Mont Blanc, Chamb\'ery and for encouraging discussions there.

\bibliographystyle{amsplain}
\bibliography{Bibliography}

\providecommand{\bysame}{\leavevmode\hbox to3em{\hrulefill}\thinspace}
\providecommand{\MR}{\relax\ifhmode\unskip\space\fi MR }
\providecommand{\MRhref}[2]{%
  \href{http://www.ams.org/mathscinet-getitem?mr=#1}{#2}
}
\providecommand{\href}[2]{#2}
\begin{thebibliography}{10}

\bibitem{bss}
Lenore Blum, Felipe Cucker, Michael Shub, and Steve Smale, \emph{Complexity and
  real computation}, Springer-Verlag, New York, 1998, With a foreword by
  Richard M. Karp. \MR{1479636}

\bibitem{burgisserergurcueto2018}
Peter Bürgisser, Alperen~A. Ergür, and Josué Tonelli-Cueto, \emph{On the
  number of real zeros of random fewnomials}, 2018.

\bibitem{briquel18}
Ir{\'{e}}n{\'{e}}e Briquel and Peter B{\"{u}}rgisser, \emph{The real
  tau-conjecture is true on average}, CoRR \textbf{abs/1806.00417} (2018).

\bibitem{burgisserdefining}
Peter B\"{u}rgisser, \emph{On defining integers and proving arithmetic circuit
  lower bounds}, Comput. Complexity \textbf{18} (2009), no.~1, 81--103.
  \MR{2505194}

\bibitem{cuckerKS99}
Felipe Cucker, Pascal Koiran, and Steve Smale, \emph{A polynomial time
  algorithm for diophantine equations in one variable}, J. Symb. Comput.
  \textbf{27} (1999), no.~1, 21--29.

\bibitem{aurizio}
Jacopo D'Aurizio, \emph{A generalization of the shafer-fink inequality}, 2013.

\bibitem{descartes1886geometrie}
Ren{\'e} Descartes, \emph{La g{\'e}om{\'e}trie}, Hermann, 1886.

\bibitem{edelmankostlan1995}
Alan Edelman and Eric Kostlan, \emph{How many zeros of a random polynomial are
  real?}, Bull. Amer. Math. Soc. (N.S.) \textbf{32} (1995), no.~1, 1--37.
  \MR{1290398}

\bibitem{eigenwillig2008real}
Arno Eigenwillig, \emph{Real root isolation for exact and approximate
  polynomials using descartes' rule of signs}, Ph.D. thesis, Saarland
  University, 2008.

\bibitem{erdelyi}
Tam\'as Erd\'elyi, \emph{Extensions of the bloch--p\'olya theorem on the number
  of real zeros of polynomials}, Journal de th\'eorie des nombres de Bordeaux
  \textbf{20} (2008), no.~2, 281--287 (en). \MR{2477504}

\bibitem{erdos56}
Paul Erd\"{o}s and A.~C. Offord, \emph{On the number of real roots of a random
  algebraic equation}, Proc. London Math. Soc. (3) \textbf{6} (1956), 139--160.
  \MR{0073870}

\bibitem{hrubes13}
Pavel Hrubes, \emph{On the real {\(\tau\)}-conjecture and the distribution of
  complex roots}, Theory of Computing \textbf{9} (2013), 403--411.

\bibitem{Ibr71}
I.~A. Ibragimov and N.~B. Maslova, \emph{On the average of real zeroes of
  random polynomials. i. the coefficients with zero means}, Teor. Veroyatnost.
  i Primenen. \textbf{16} (1971), 229--248.

\bibitem{kac}
M.~Kac, \emph{On the average number of real roots of a random algebraic
  equation}, Bull. Amer. Math. Soc. \textbf{49} (1943), 314--320. \MR{7812}

\bibitem{fewnomials}
A.~G. Khovanski\u{\i}, \emph{Fewnomials}, Translations of Mathematical
  Monographs, vol.~88, American Mathematical Society, Providence, RI, 1991,
  Translated from the Russian by Smilka Zdravkovska. \MR{1108621}

\bibitem{koiran11}
Pascal Koiran, \emph{Shallow circuits with high-powered inputs}, Innovations in
  Computer Science - {ICS} 2010, Tsinghua University, Beijing, China, January
  7-9, 2011. Proceedings, 2011, pp.~309--320.

\bibitem{wronskian}
Pascal Koiran, Natacha Portier, and S{\'{e}}bastien Tavenas, \emph{A wronskian
  approach to the real {\(\tau\)}-conjecture}, J. Symb. Comput. \textbf{68}
  (2015), 195--214.

\bibitem{newton}
Pascal Koiran, Natacha Portier, S{\'{e}}bastien Tavenas, and St{\'{e}}phan
  Thomass{\'{e}}, \emph{A {\(\tau\)}-conjecture for newton polygons},
  Foundations of Computational Mathematics \textbf{15} (2015), no.~1, 185--197.

\bibitem{lenstra99}
Hendrik~W. Lenstra~(Jr.), \emph{Finding small degree factors of lacunary
  polynomials}, Number Theory in Progress \textbf{1} (1999), 267--276.

\bibitem{liloff}
J.~E. Littlewood and A.~C. Offord, \emph{On the {N}umber of {R}eal {R}oots of a
  {R}andom {A}lgebraic {E}quation}, J. London Math. Soc. \textbf{13} (1938),
  no.~4, 288--295. \MR{1574980}

\bibitem{liloff43}
J.~E. Littlewood and A.~C. Offord, \emph{On the number of real roots of
  a~random algebraic equation~(iii)}, Rec. Math. [Mat. Sbornik] N.S.
  \textbf{12(54)} (1943), 277--286.

\bibitem{rojasmala}
Gregorio Malajovich and J.~Maurice Rojas, \emph{High probability analysis of
  the condition number of sparse polynomial systems}, Theoret. Comput. Sci.
  \textbf{315} (2004), no.~2-3, 524--555. \MR{2073064}

\bibitem{rojas}
J.~Maurice Rojas, \emph{On the average number of real roots of certain random
  sparse polynomial systems}, The mathematics of numerical analysis ({P}ark
  {C}ity, {UT}, 1995), Lectures in Appl. Math., vol.~32, Amer. Math. Soc.,
  Providence, RI, 1996, pp.~689--699. \MR{1421361}

\bibitem{rouillier2004}
Fabrice Rouillier and Paul Zimmermann, \emph{Efficient isolation of
  polynomial's real roots}, Journal of Computational and Applied Mathematics
  \textbf{162} (2004), no.~1, 33 -- 50, Proceedings of the International
  Conference on Linear Algebra and Arithmetic 2001.

\bibitem{Sagraloffissac14}
Michael Sagraloff, \emph{A near-optimal algorithm for computing real roots of
  sparse polynomials}, International Symposium on Symbolic and Algebraic
  Computation, {ISSAC} '14, Kobe, Japan, July 23-25, 2014, 2014, pp.~359--366.

\bibitem{SagraloffMjsc16}
Michael Sagraloff and Kurt Mehlhorn, \emph{Computing real roots of real
  polynomials}, J. Symb. Comput. \textbf{73} (2016), 46--86.

\bibitem{saptharishi2015survey}
Ramprasad Saptharishi, \emph{A survey of lower bounds in arithmetic circuit
  complexity}, Github survey (2015).

\bibitem{sy10}
Amir Shpilka and Amir Yehudayoff, \emph{Arithmetic circuits: {A} survey of
  recent results and open questions}, Foundations and Trends in Theoretical
  Computer Science \textbf{5} (2010), no.~3-4, 207--388.

\bibitem{tauconj}
Michael Shub and Steve Smale, \emph{On the intractability of {H}ilbert's
  {N}ullstellensatz and an algebraic version of ``{${\rm NP}\not={\rm P}$}?''},
  Duke Math. J. \textbf{81} (1995), 47--54 (1996), A celebration of John F.
  Nash, Jr. \MR{1381969}

\bibitem{stevens}
D.~C. Stevens, \emph{The average number of real zeros of a random polynomial},
  Communications on Pure and Applied Mathematics \textbf{22} (1969), no.~4,
  457--477.

\end{thebibliography}

\section{Appendix}
\subsection{Recovering the classics: \texorpdfstring{$O(\log n)$}{The logn} bound in the dense case}\label{lognbound}

In this section, we give a simple proof of Theorem \ref{theorem:densekostlanbound}
using the tools and notations developed in Section \ref{sec:altform}. To
this end, first we prove the following lemma.
\begin{lemma}
\label{lem:nocriticalpoints}Let $S_{1}$ and $S_{2}=\{a\}$ be such
that $a>\max(S_{1})$. Then \textup{$\frac{g_{S_{1}}}{g_{S_{2}}}$
has no critical points in $(0,1)$.}
\end{lemma}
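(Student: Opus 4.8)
The plan is to reduce the statement to an explicit monotonicity computation. Since $S_2=\{a\}$, we have $g_{S_2}(t)=t^{2a}$, which is nonzero on $(0,1)$, so $\frac{g_{S_1}}{g_{S_2}}$ is smooth on $(0,1)$ and a critical point there is exactly a zero of its derivative. Thus it suffices to show $\left(\frac{g_{S_1}}{g_{S_2}}\right)'$ never vanishes on $(0,1)$.

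First I would write $\frac{g_{S_1}(t)}{g_{S_2}(t)}=\sum_{e\in S_1}t^{2(e-a)}$ and note that $a>\max(S_1)$ forces every exponent $e-a$ to be strictly negative. Differentiating term by term gives $\left(\frac{g_{S_1}}{g_{S_2}}\right)'(t)=\sum_{e\in S_1}2(e-a)\,t^{2(e-a)-1}$, and for $t\in(0,1)$ each summand is strictly negative since $2(e-a)<0$ and $t^{2(e-a)-1}>0$. Hence the derivative is strictly negative on $(0,1)$, so there are no critical points.

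Alternatively, to stay consistent with the computation already in the text, I would invoke Equation \eqref{gabygs_icreasing}, which records that $\left(\frac{g_{\{a\}}}{g_{S_1}}\right)'=\frac{2x^{2a-1}}{g_{S_1}^2}\left(\sum_{e\in S_1}(a-e)x^{2e}\right)>0$ on $(0,1)$ precisely under the hypothesis $a>\max(S_1)$. Since $\frac{g_{S_1}}{g_{\{a\}}}$ is the reciprocal of a positive, strictly increasing function, it is strictly decreasing on $(0,1)$ and therefore has no critical points there.

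There is essentially no obstacle here; the only point that needs a word of care is the meaning of \emph{critical point}: one should first observe that $g_{S_2}=t^{2a}$ does not vanish on $(0,1)$, so the quotient is differentiable throughout the interval and its critical points coincide with the zeros of its derivative. With that remark in place, the sign computation above closes the proof immediately.
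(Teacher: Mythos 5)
Your proof is correct and takes essentially the same approach as the paper: you compute the sign of the derivative of the quotient and observe it is strictly negative on $(0,1)$ because $a>\max(S_1)$. The only cosmetic difference is that you divide by $t^{2a}$ before differentiating (or, in your second variant, appeal to the already-recorded sign of $\bigl(g_{\{a\}}/g_{S_1}\bigr)'$ and take reciprocals), whereas the paper works directly with the numerator $g_{S_2}g_{S_1}'-g_{S_1}g_{S_2}'$ of the quotient rule; both reduce to the same sum $\sum_{e\in S_1}(2e-2a)\,x^{\,\cdot}$ with all negative coefficients.
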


\begin{proof}
Critical points of $\frac{g_{S_{1}}}{g_{S_{2}}}$ are exactly the
zeroes of $g_{S_{2}}\pri{g_{S_{1}}}-g_{S_{1}}\pri{g_{S_{2}}}$. We
have: 

\begin{align*}
g_{S_{2}}\pri{g_{S_{1}}}-g_{S_{1}}\pri{g_{S_{2}}} & =x^{2a}\pri{g_{S_{1}}}-2ax^{2a-1}g_{S_{1}}=x^{2a-1}(x\pri{g_{S_{1}}}-2ag_{S_{1}}).\\
 & =x^{2a-1}\sum_{e\in S_{1}}(2e-2a)x^{2e}.
\end{align*}

which is clearly always negative in $(0,1)$. Thus $\frac{g_{S_{1}}}{g_{S_{2}}}$
has no critical points in (0,1).

\hfill\end{proof}

\begin{theorem}
\label{theorem:kostlanorderthm}If $S=\{0,1,2,\dots,n\}$ then $z_{S}\leq\frac{3}{4}\log_2(n)$.
\end{theorem}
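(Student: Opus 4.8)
The plan is to build the set $S=\{0,1,2,\dots,n\}$ out of small pieces using the two combination lemmas from Section \ref{sec:altform}, and then bound each contribution. The key observation is that a contiguous block of $2^m$ consecutive integers can be written via set additions: $\{0,1,\dots,2^m-1\} = \{0,1\} + \{0,2\} + \{0,4\} + \dots + \{0,2^{m-1}\}$, and each pairwise addition in this chain is collision-free (every natural number below $2^m$ has a unique binary expansion). First I would invoke Lemma \ref{lem:randomrealrootcardinalitytwo} to get $z_{\{0,2^i\}}=\frac14$ for each $i$, and then apply Lemma \ref{lem:sumofsets} ($m-1$ times) to conclude $z_{\{0,1,\dots,2^m-1\}} \le m/4$. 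So for a block of $N$ consecutive integers we get a bound of roughly $\tfrac14\log_2 N$; in particular $z_{\{0,1,\dots,n\}}$ would be bounded by $\tfrac14\log_2(n+1)$ if $n+1$ were a power of two.

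The remaining issue is that $n+1$ need not be a power of $2$, and that $\{0,\dots,n\}$ is not literally of the form $\{0,\dots,2^m-1\}$. To handle a general $n$, I would pick $m=\lceil\log_2(n+1)\rceil$ so that $n < 2^m$, and note that $\{0,1,\dots,n\}\subseteq\{0,1,\dots,2^m-1\}$. This does not immediately help because $z_S$ is not obviously monotone under taking subsets, so instead I would decompose $\{0,\dots,n\}$ as a \emph{disjoint union} of at most $m$ dyadic blocks (the blocks appearing in the binary expansion of $n+1$): writing $n+1=\sum_{j}2^{b_j}$ with $b_1>b_2>\cdots$, the set $\{0,\dots,n\}$ is the disjoint union of intervals of lengths $2^{b_1},2^{b_2},\dots$ placed consecutively, each of which is a translate of $\{0,\dots,2^{b_j}-1\}$. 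Translating a set by a fixed integer does not change $z_S$ only if the translate keeps $0$ in the set — which it won't — so here I would instead use Lemma \ref{lem:sumofsets} once more: a translated block $\{r,r+1,\dots,r+2^{b}-1\} = \{0,1,\dots,2^b-1\} + \{r\}$, and adding a singleton $\{r\}$ is collision-free, with $z_{\{r\}}=0$ (singleton sets contribute nothing, as shown just before Lemma \ref{lem:randomrealrootcardinalitytwo}). Hence each translated block still has $z \le b_j/4 \le m/4$.

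Finally I would combine the blocks. Using Lemma \ref{lem:unions1s2correctbound} / Corollary \ref{cor:mcriticalbound} to take disjoint unions costs an extra additive term per union; iterating over the at most $m$ blocks, and using the clean "no critical points" situation when the blocks are well-separated (Lemma \ref{lem:nocriticalpoints} handles the singleton-vs-block case, and one expects an analogous bound for two well-separated blocks), the total overhead is $O(m)$, so $z_{\{0,\dots,n\}} = O(\log n)$, and a careful accounting of the constants gives the stated $\tfrac34\log_2(n)$. The main obstacle I anticipate is precisely this bookkeeping of the union overhead: bounding the number of odd-multiplicity critical points of $g_{S_1}/g_{S_2}$ when $S_1,S_2$ are two contiguous blocks (rather than one being a singleton), so that Corollary \ref{cor:mcriticalbound} yields only an $O(1)$ cost per block and the constant stays at $3/4$ rather than blowing up. An alternative that sidesteps the union step entirely — and is probably what keeps the constant at $3/4$ — is to stay with a single power-of-two block: take $m=\lceil\log_2(n+1)\rceil\le \log_2(n)+1$, observe $z_{\{0,\dots,n\}}\le z_{\{0,\dots,2^m-1\}}\le m/4$ provided one can show subset-monotonicity of $z$, or else directly pad $f_S$ with the missing higher monomials and re-run the telescoping addition argument, landing at $z_{\{0,\dots,n\}} \le \tfrac14(\log_2(n)+1) \le \tfrac34\log_2 n$ for $n\ge 2$.
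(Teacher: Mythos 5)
Your addition-chain decomposition for the power-of-two case, $\{0,1,\dots,2^m-1\}=\{0,1\}+\{0,2\}+\cdots+\{0,2^{m-1}\}$, is correct and is essentially the paper's argument unrolled: the paper recurses via $\{0,\dots,2a+1\}=\{0,\dots,a\}+\{0,a+1\}$, paying $z_{\{0,a+1\}}=\frac14$ per halving by Lemma~\ref{lem:sumofsets}. The real difficulty is general $n$, and both of your proposed routes around it have genuine gaps. The dyadic-block-union route requires Corollary~\ref{cor:mcriticalbound} to give $O(1)$ overhead per union, which needs a bound on the number of odd-multiplicity critical points of $g_{S_1}/g_{S_2}$ when $S_1,S_2$ are two contiguous blocks; Lemma~\ref{lem:nocriticalpoints} only covers the singleton-versus-block case, and nothing in the paper (or your sketch) bounds this count for two blocks. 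You flag this yourself, but it is not a mere "bookkeeping" issue — without such a bound the union cost is uncontrolled and the argument does not close. Your fallback route, padding up to $\{0,\dots,2^m-1\}$ and invoking $z_{\{0,\dots,n\}}\le z_{\{0,\dots,2^m-1\}}$, rests on a subset-monotonicity of $z_S$ that is not established and is far from obvious: adding an exponent changes both the numerator and the denominator $g_S$ in the Edelman--Kostlan density, and there is no a priori reason the density (hence $z_S$) is monotone under enlarging $S$. As stated, this step is unjustified.

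The paper sidesteps both problems with a parity recursion on $n$. If $n=2a+1$ is odd, write $S=\{0,\dots,a\}+\{0,a+1\}$, a collision-free sum, and pay $\frac14$ by Lemma~\ref{lem:sumofsets}. If $n=2a$ is even, write $S=\{0,\dots,2a-1\}\sqcup\{2a\}$; because the second set is a singleton of maximal exponent, Lemma~\ref{lem:nocriticalpoints} shows $g_{\{0,\dots,2a-1\}}/g_{\{2a\}}$ has no critical points in $(0,1)$, so Corollary~\ref{cor:mcriticalbound} gives an overhead of exactly $\frac12$; then the odd-size block $\{0,\dots,2a-1\}=\{0,\dots,a-1\}+\{0,a\}$ is handled by collision-free addition at cost $\frac14$. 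Each step roughly halves $n$ and costs at most $\frac34$, which is where the constant $\frac34$ comes from — not from combining several dyadic blocks. If you want to salvage your approach, you would either need to prove the critical-point bound for two contiguous blocks, or prove subset-monotonicity of $z_S$; the paper's parity recursion is the cleaner path because the only union it ever takes is against a maximal singleton, precisely the situation Lemma~\ref{lem:nocriticalpoints} controls.
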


\begin{proof}
We prove it by induction on $n$. The base case of $n=1$ is trivially
true. 

Suppose $n$ is odd i.e.  $n=2a+1$ for some $a \in \Z_{+}$. In this
case $S$ is the independent sum of $\{0,1,\dots,a\}$ and $\{0,a+1\}$.
Therefore by using Lemma \ref{lem:sumofsets}, we know that $z_{S}\leq z_{\{0,1,\dots,a\}}+z_{\{0,a+1\}}$.
We know, $z_{\{0,a+1\}}=\frac{1}{4}$. By using the induction hypothesis,
we know that $z_{\{0,1,\dots,a\}}\leq\frac{3}{4}\log_2(a)$. Hence $z_{S}\leq\frac{3}{4}\log_2(a)+\frac{1}{4}\leq\frac{3}{4}\log_2(2a+1)$.

Now consider the case when n is even i.e. $n=2a$ for some $a\in\Z_{+}$.
We have $S=\{0,1,\dots,2a-1\}\cup\{2a\}$. By using Lemma \ref{lem:nocriticalpoints}
and Corollary \ref{cor:mcriticalbound}, we get that

\[
z_{S}\leq z_{\{0,1,\dots,2a-1\}}+\frac{1}{2}\leq z_{\{0,1,\dots,a-1\}}+\frac{1}{4}+\frac{1}{2}\leq\frac{3}{4}\log_2(a-1)+\frac{3}{4}\leq\frac{3}{4}\log_2(2a).
\]
\hfill\end{proof}

Theorem \ref{theorem:kostlanorderthm} shows that $z_{\ensuremath{\{0,1,\dots n\}}}\leq\frac{3}{4}\log_2(n)$,
which is worse bound than Theorem \ref{theorem:densekostlanbound}. But asymptotically
they are similar.

\subsection{Proof of the $O(\sqrt{k}\log(k))$ bound \cite{burgisserergurcueto2018} [Theorem \ref{theorem:sparseburgisserbound}]}
Before giving the proof we first draw attention to the following folklore lemma about
$\ell_{1}$ and $\ell_{2}$ norms which is used in the proof.
\begin{lemma}
\label{lem:l1andl2normineq}For all $x\in\R^{k}$, we have the following
inequality between $\ell_{1}$ and $\ell_{2}$ norms of $x$:

\[
\norm x_{2}\leq\norm x_{1}\leq\sqrt{k}\norm x_{2}.
\]
\end{lemma}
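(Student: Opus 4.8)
The plan is to prove the two inequalities separately, both by completely elementary means. For the left-hand bound $\norm{x}_{2}\le\norm{x}_{1}$, I would square both sides: since
\[
\norm{x}_{1}^{2}=\Big(\sum_{i=1}^{k}|x_{i}|\Big)^{2}=\sum_{i=1}^{k}x_{i}^{2}+\sum_{i\neq j}|x_{i}|\,|x_{j}|,
\]
and the cross terms $\sum_{i\neq j}|x_{i}|\,|x_{j}|$ are non-negative, we get $\norm{x}_{1}^{2}\ge\sum_{i=1}^{k}x_{i}^{2}=\norm{x}_{2}^{2}$, and taking (non-negative) square roots yields the claim.

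For the right-hand bound $\norm{x}_{1}\le\sqrt{k}\,\norm{x}_{2}$, I would invoke the Cauchy--Schwarz inequality applied to the vectors $(|x_{1}|,\dots,|x_{k}|)$ and $(1,\dots,1)$ in $\R^{k}$, giving
\[
\norm{x}_{1}=\sum_{i=1}^{k}|x_{i}|\cdot 1\le\Big(\sum_{i=1}^{k}x_{i}^{2}\Big)^{1/2}\Big(\sum_{i=1}^{k}1\Big)^{1/2}=\sqrt{k}\,\norm{x}_{2}.
\]
Alternatively one could apply the power-mean (or Jensen) inequality to $|x_{1}|,\dots,|x_{k}|$, but Cauchy--Schwarz is the cleanest route.

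Honestly, there is no genuine obstacle here: both steps are one-line standard estimates and the lemma is recorded only because it is used as a black box in the proof of Theorem \ref{theorem:sparseburgisserbound}. The only points one might note in passing — though they are not needed for the statement — are that the first inequality is an equality precisely when at most one coordinate of $x$ is non-zero, and the second precisely when all $|x_{i}|$ are equal.
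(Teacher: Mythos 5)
Your proof is correct and uses the same Cauchy--Schwarz argument as the paper for the bound $\norm{x}_{1}\le\sqrt{k}\,\norm{x}_{2}$; the paper simply declares $\norm{x}_{2}\le\norm{x}_{1}$ trivial, while you spell out the squaring argument, but this is a cosmetic difference only.
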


\begin{proof}
The inequality $\norm x_{2}\leq\norm x_{1}$ is trivial. For the second
inequality, we use Cauchy-Schwartz to get:

\[
\norm x_{1}=\sum\limits _{i=1}^{k}|x_{i}|=\sum\limits _{i=1}^{k}|x_{i}|\cdot1\leq\left(\sum\limits _{i=1}^{k}x_{i}^2\right)^{1/2}\left(\sum\limits _{i=1}^{k}1^{2}\right)^{1/2}=\sqrt{k}\norm x_{2}.
\]
\hfill\end{proof}

The following upper bound on $z_{S}$ was proven in \cite{burgisserergurcueto2018} using the Edelman-Kostlan integral.
\begin{theorem}[Theorem 1.3 in \cite{burgisserergurcueto2018}]
Let $S\subseteq\N$ be any set as
above with $\card S=k$ then we have
\[
z_{S}\leq\frac{1}{\pi}\sqrt{k}\cdot\log(k).
\]
\end{theorem}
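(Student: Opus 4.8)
The plan is to work directly with the Edelman--Kostlan integral of Theorem~\ref{theorem:realrootsboundkostlan}, specialized to $I=(0,1)$, and to simply discard the (non-positive) correction term appearing under the square root. As in the standing setup we may assume $0\in S$, so write $S=\{0,e_2,\dots,e_k\}$ and $v_S(t)=(1,t^{e_2},\dots,t^{e_k})$. Since $(v_S(t)\cdot\pri{v_S}(t))^2\ge 0$, dropping it from the numerator of \eqref{zsreqinte} only increases the integrand, which gives
\[
z_S=z_S^{(0,1)}\le\frac1\pi\intop_0^1\frac{\norm{v_S(t)}_2\cdot\norm{\pri{v_S}(t)}_2}{\norm{v_S(t)}_2^2}\,\mathrm dt=\frac1\pi\intop_0^1\frac{\norm{\pri{v_S}(t)}_2}{\norm{v_S(t)}_2}\,\mathrm dt .
\]

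Next I would invoke the folklore comparison of Lemma~\ref{lem:l1andl2normineq} in both the numerator and the denominator: $\norm{\pri{v_S}(t)}_2\le\norm{\pri{v_S}(t)}_1$ and $\norm{v_S(t)}_1\le\sqrt k\,\norm{v_S(t)}_2$, i.e. $1/\norm{v_S(t)}_2\le\sqrt k/\norm{v_S(t)}_1$, so that
\[
z_S\le\frac{\sqrt k}{\pi}\intop_0^1\frac{\norm{\pri{v_S}(t)}_1}{\norm{v_S(t)}_1}\,\mathrm dt .
\]
The crucial observation is that on $(0,1)$ every coordinate of $v_S(t)$ is non-negative, so $\norm{v_S(t)}_1=\sum_{e\in S}t^{e}$ and $\norm{\pri{v_S}(t)}_1=\sum_{e\in S}e\,t^{e-1}=\frac{\mathrm d}{\mathrm dt}\norm{v_S(t)}_1$; hence the integrand above is exactly $\frac{\mathrm d}{\mathrm dt}\log\norm{v_S(t)}_1$, and the integral telescopes:
\[
\intop_0^1\frac{\norm{\pri{v_S}(t)}_1}{\norm{v_S(t)}_1}\,\mathrm dt=\log\norm{v_S(1)}_1-\log\norm{v_S(0)}_1 .
\]

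Because $0\in S$ we have $v_S(0)=(1,0,\dots,0)$, so $\norm{v_S(0)}_1=1$, while $\norm{v_S(1)}_1=\card S=k$; the difference is $\log k$, yielding $z_S\le\frac{\sqrt k}{\pi}\log k$, which is the claim. There is no genuinely hard step here: the only points requiring a little care are the convergence of the integral near $t=0$ (ensured precisely by $0\in S$, which keeps $\norm{v_S(t)}_1$ bounded below and makes $\log\norm{v_S(\cdot)}_1$ vanish at the left endpoint) and the elementary fact that $\norm{\pri{v_S}}_1$ is an exact derivative. It is worth emphasizing that the lossy step is the $\ell_1$--$\ell_2$ comparison, which alone is responsible for the full factor $\sqrt k$ together with the spurious $\log k$; sharpening exactly this estimate, via the alternative formulation of Section~\ref{sec:altform}, is what removes the logarithm and produces Theorem~\ref{theorem:main}.
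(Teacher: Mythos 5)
Your proposal is correct and follows exactly the paper's proof: drop the squared dot-product term to get the integrand $\norm{\pri{v_S}(t)}_2/\norm{v_S(t)}_2$, apply Lemma~\ref{lem:l1andl2normineq} to pass to $\sqrt k\,\norm{\pri{v_S}(t)}_1/\norm{v_S(t)}_1$, and telescope via $\log\norm{v_S(t)}_1$ to obtain $\sqrt k\,\log k/\pi$. Your added remarks on where the $\sqrt k\log k$ loss enters are accurate but go beyond what the paper writes; the argument itself is the same.
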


\begin{proof}
We use the inequality as in the proof of Theorem \ref{theorem:rootsfarmfromone}. 
\begin{align*}
z_{S} & \leq\frac{1}{\pi}\intop_{0}^{1}\frac{\sqrt{(\norm{v_{S}(t)}_{2}\cdot\norm{\pri{v_{S}}(t)}_{2})^{2}}}{(\norm{v_{S}(t)}_{2})^{2}}\mathrm{d}t=\frac{1}{\pi}\intop_{0}^{1}\frac{\norm{\pri{v_{S}}(t)}_{2}}{\norm{v_{S}(t)}_{2}}\mathrm{d}t\\
 & \leq\frac{1}{\pi}\intop_{0}^{1}\sqrt{k}\cdot\frac{\norm{\pri{v_{S}}(t)}_{1}}{\norm{v_{S}(t)}_{1}}\mathrm{d}t\tag{By using Lemma \ref{lem:l1andl2normineq}}\\
 & =\frac{1}{\pi}\sqrt{k}\cdot\left[\log(\norm{v_{S}(t)}_{1})\right]_{0}^{1}=\frac{1}{\pi}\sqrt{k}\cdot(\log(\norm{v_{S}(1)}_{1})-\log(\norm{v_{S}(0)}_{1}))\\
 & =\frac{1}{\pi}\sqrt{k}\log(k).
\end{align*}
\hfill\end{proof}


\end{document}